\DeclareMathOperator*{\res}{\textrm{res}}
\newcommand{\rhp}{A_N}
\def\pmtwo#1#2#3#4{\left( \begin{array}{cc}#1&#2\\#3&#4\end{array}\right)}
\newtheorem{RHP}[theorem]{RHP}
\numberwithin{equation}{section}
\title{The formation of a soliton gas condensate for the focusing Nonlinear  Schr\"odinger equation}
\author[1]{Aikaterini Gkogkou}
\author[1]{Guido Mazzuca}
\author[1]{Kenneth D. T-R McLaughlin}
\affil[1]{Department of Mathematics, Tulane University, New Orleans, LA}
\date{\today}
\begin{document}
\maketitle

\abstract{ In this work, we carry out a rigorous analysis of a multi-soliton solution of the focusing nonlinear Schr\"{o}dinger equation as the number, $N$, of solitons grows to infinity.  We discover configurations of $N$-soliton solutions which exhibit the formation (as $N \to \infty$) of a soliton gas condensate. Specifically, we show that when the eigenvalues of the Zakharov - Shabat operator for the NLS equation accumulate on two bounded horizontal segments in the complex plane with norming constants bounded away from $0$, then, asymptotically, the solution is described by a rapidly oscillatory elliptic-wave with constant velocity, on compact subsets of $(x,t)$. We then consider more complex solutions with an extra soliton component, and we show that, in this deterministic setting, the kinetic theory of solitons applies. This is to be distinguished from previous analyses of soliton gasses where the norming constants were tending to zero with $N$, and the asymptotic description only included elliptic waves in the long-time asymptotics.

\section{Introduction}

The focusing Nonlinear Schr\"odiner (NLS) equation is one of the fundamental examples of an integrable partial differential equation for a complex-valued unknown $\psi(x,t)$:
\begin{equation}\label{eq:NLS}
i \partial_t \psi  = -\frac{1}{2} \partial_x^2 \psi - | \psi |^2\psi\,, \quad (x,t) \in \R .
	\end{equation}
This equation is a canonical model for the deformation of a laser pulse traveling down an optical fiber transmission line \cite{Suret2011}, for one-dimensional wave propagation in deep water, and several other physical phenomena \cite{WhatIntegrability}. 

The NLS equation \eqref{eq:NLS} was discovered to be integrable by Zakharov and Shabat \cite{ZS71}.  The equation arises as the compatibility condition of the pair of two linear differential equations, the so-called Lax pair:
\begin{eqnarray}
\label{eq:ZS1}
&&
\partial_{x} \Phi = \begin{pmatrix}
-i z & \psi\\
 -\wo{\psi} & iz
\end{pmatrix} \Phi, \\
&&
 \partial_{t} \Phi = \begin{pmatrix}
-iz^2 + \frac{i}{2} |\psi|^2 & z \psi + \frac{i}{2} \psi_{x} \\ \\[0.1pt]
-z \wo{\psi} + \frac{i}{2} \wo{\psi}_{x} & i z^2 - \frac{i}{2} |\psi|^2
\end{pmatrix} \Phi
\end{eqnarray}
where  $z \in \mathbb{C}$ is a spectral parameter. In order for there to exist a simultaneous solution to both of these differential equations, $\psi(x,t)$ must solve the NLS equation.  But more importantly, as $\psi(x,t)$ evolves according to the NLS equation, the scattering data associated to the differential equation \eqref{eq:ZS1} evolve in an explicit way, which yields a solution procedure for the NLS equation:  from the initial data one computes the scattering data (to be defined momentarily), whose evolution in $t$ is explicit.  For any $t>0$, the solution $\psi(x,t)$ is determined via the inverse scattering transform.  It is then the analysis of the direct scattering transform, and the inverse transform at later times, which provides meaningful information about the behavior of solutions to the NLS equation.  

The NLS equation possesses a broad family of fundamental solutions.  First among these is the family of soliton solutions:
\begin{eqnarray}
\label{eq:1sol}
 \psi_{s} = 2 \eta \ \mbox{sech}\left(2 \eta ( x + 2 \xi t - x_{0})\right)e^{- 2i \left[ \xi x + \left( \xi^{2} - \eta^{2}\right)t\right]} e^{- i \phi_{0}}  ,
\end{eqnarray}
parametrized by four parameters, $\xi, \eta, \phi_{0}$, and $x_{0}$. Soliton solutions are exponentially localized traveling wave solutions, and represent canonical nonlinear phenomena which has been observed and exploited in many areas of application, including fluid dynamics, nonlinear optics, and other areas in the quantum realm \cite{Hasegawa2022,Hiyane2024,Kengne2021,suret2024soliton}.  But there are many more solutions in the catalog, including periodic and quasi-periodic solutions described using algebro-geometric function theory, dispersive shock waves, breathers, as well as the object of study in this paper:  multi-soliton solutions, which we will refer to as $N$-soliton solutions.  The positive integer $N$ refers to the number of fundamental solitons that are in some sense present in the solution.

An $N$-soliton solution typically exhibits the following asymptotic behavior as $|t| \to \infty$:  the solution decomposes into a collection of well-separated, exponentially localized traveling wave solutions, each of which asymptotically approaches a soliton solution as in \eqref{eq:1sol} with predetermined parameters.

The soliton, and $N$-soliton solutions, were part of the original discovery of integrable nonlinear partial differential equations, and there emerged very quickly a dual interpretation of the ``solitonic component'' of a solution as a collection of particles, at least in the $|t| \to \infty$ asymptotic regime.  Of course, for intermediate times, it is not easy to identify any individual solitons in a solution of the PDE, but nonetheless the particle interpretation persisted, with Zakharov \cite{Z71} proposing a kinetic theory of solitons in 1971.  The kinetic theory of solitons is a qualitative theory describing the evolution of a density function for solitons, supposing them to be individual particles which asymptotically interact in a pair-wise fashion.  The density function is coupled to a "velocity profile function" corresponding to the effective velocity of a single particle as it interacts with the gas.  In the last decade, there has emerged new experimental, numerical, and analytical justification for this qualitative theory (see \cite{suret2024soliton} for a review of these developments).  

In this paper we develop the analytical  description of $N$-soliton solutions in a new region of space-time which has been numerically observed (and conjectured) to correspond to a special type of soliton gas called a \textit{soliton condensate}.  The paper is organized as follows.  In Section \ref{sec:ProbSetUp} we present a very brief introduction to the kinetic theory of solitons, explain the characterization of $N$-soliton solutions in the soliton gas condensate scaling via Riemann-Hilbert problems, formulate a Riemann-Hilbert problem for the limiting soliton gas condensate solution, and describe our results regarding the behavior of $N$-soliton solutions of the NLS equation in the condensate scaling.  In Section \ref{sec:dressing}-\ref{sec:local} we carry out the asymptotic analysis of the Riemann-Hilbert problem for the soliton gas condensate.  In Section \ref{sec:solution} we present the proof of our main results, using the previously mentioned asymptotic analysis. In the brief Section \ref{sec:kinetic} we consider the interaction of a tracer soliton with the gas, and show that the kinetic theory of solitons holds for this soliton condensate.  Some technical results are deferred to the appendices.

\section{\texorpdfstring{$N$}{N}-soliton solutions, soliton gasses, and soliton gas condensates}
\label{sec:ProbSetUp}
\subsection{Direct and Inverse Scattering for the NLS equation}
The direct scattering transform is a mapping from a function $\psi(x)$ to the scattering data associated to the differential equation \eqref{eq:ZS1}.  We will give a very brief description here, and refer the reader to \cite{Ken_NLS} for a more detailed description.  One studies two sets of Jost solutions $\Phi^{\pm}(x,z)$ of this differential equation, which are uniquely determined by a normalization condition
\begin{equation}
\Phi^{\pm}(x,z)e^{ i z x \sigma_{3}} \to I \ \mbox{ as } x \to \pm \infty \,, \quad \sigma_3=\begin{pmatrix}
    1& 0 \\
    0&-1
\end{pmatrix}.
\end{equation}
The matrix-valued function $\Phi^{+}(x,z)$ is a fundamental solution to \eqref{eq:ZS1}, as is $\Phi^{-}(x,z)$.  This fact yields the scattering relation between these two solutions,
\begin{equation}
\Phi^{-}(x,z) = \Phi^{+}(x,z) \pmtwo{a(z)}{-\overline{b(z)}}{b(z)}{\overline{a(z)}}, \ \ \ \ \mbox{det}\pmtwo{a(z)}{-\overline{b(z)}}{b(z)}{\overline{a(z)}}= |a(z)|^{2} + |b(z)|^{2} = 1 \ , 
\end{equation}
which in turn yields the reflection coefficient $r(z) = b(z)/a(z)$ and the transmission coefficient $T(z) = \frac{1}{a(z)}$. 

It turns out that the Jost solutions have analyticity properties in the variable $z$, which follows from the usual existence theory.  Specifically, the first column of $\Phi^{-}(x,z)$ (and the second column of $\Phi^{+}(x,z)$) has an analytic extension to $\mathbb{C}_{+}$, the upper-half of the complex $z$  plane, where it exhibits \textit{exponential decay} for $x \to - \infty$ (and the second column of $\Phi^{+}(x,z)$ decays exponentially for $x \to + \infty$).  Similarly, the second column of $\Phi^{-}(x,z)$ and the first column of $\Phi^{+}(x,z)$ have analytic extensions to $\mathbb{C}_{-}$.  

The transmission coefficient also has a meromorphic extension to $\mathbb{C}_{+}$, and each pole of $T(z)$ in $\mathbb{C}_{+}$ corresponds to an $L^{2}$ eigenfunction for the differential equation \eqref{eq:ZS1}. For functions $\psi(x)$ in an open dense subset of $L^1(\R)$, there are at most a finite number of poles of $T(z)$ in $\mathbb{C}_{+}$.  At each pole $\lambda_{j}$, the first column of $\Phi^{-}(x,\lambda_{j})$ and the second column of $\Phi^{+}(x,\lambda_{j})$ are linearly dependent, and the proportionality constant $c_{j}$ relating them is referred to as a normalization constant, and is defined by
\begin{eqnarray}
\left(\Phi^{-}(x,\lambda_{j})\right)^{(1)} = c_{j} \left(\Phi^{+}(x,\lambda_{j})\right)^{(2)}.
\end{eqnarray}

The scattering data (also referred to as the \textit{spectral data}) corresponding to $\psi(x)$ consists of the reflection coefficient $r(k)$, the eigenvalues $\{\lambda_{j} \}_{j=1}^{N}$, and the normalization constants $\{c_{j}\}_{j=1}^{N}$.  Following \cite{Ken_NLS}, we let $\mathcal{D}$ denote the scattering data:
\begin{eqnarray}
\mathcal{D} = \left\{ r(z), \{\lambda_{j}, c_{j} \}_{j=1}^{N} \right\}.
\end{eqnarray}
As $\psi(x,t)$ evolves in time according to the NLS equation \eqref{eq:NLS}, the scattering data also evolves in time, but that evolution is quite simple since the direct scattering transform linearizes the flow:   the eigenvalues do not change in time, and the reflection coefficient and normalization constants evolve explicitly, so that
\begin{eqnarray}
\mathcal{D}(t) = \left\{ r(z,t), \{\lambda_{j}, c_{j}(t) \}_{j=1}^{N} \right\} = \left\{ r(z)e^{2 i t z^{2} }, \{\lambda_{j}, c_{j}e^{ 2 i t \lambda_{j}^{2}  } \}_{j=1}^{N} \right\}.
\end{eqnarray}

We note in passing that the direct scattering transform as discussed above is defined for \textit{generic} initial data.  However, for non-generic data certain spectral singularities exist which require additional information and analysis.  As we are interested in \textit{reflectionless potentials} for which $r(z) \equiv 0$, spectral singularities are not an issue.

The solution $\psi(x,t)$ is uniquely determined by the scattering data, and the mapping from $\mathcal{D}(t)$ to $\psi(x,t)$ is referred to as the inverse scattering transform.  The inverse scattering transform can be characterized in terms of a system of integral equations, referred to as Gelfand-Levitan-Marcenko equations.  Equivalently, it can be characterized in terms of a Riemann-Hilbert problem, which is the approach we take here.  Riemann-Hilbert formulations of the inverse problem have over the last 30 years yielded a remarkable number of different asymptotic results concerning the behavior of solutions of integrable nonlinear partial differential equations, starting with the work of Deift and Zhou \cite{Deift1993} for the modified KdV equation.

For the NLS equation, the Riemann-Hilbert problem is to determine a $2 \times 2$ matrix $M = M(z) = M(z;x,t)$.  We will usually suppress the dependence on $x$ and $t$, and simply write $M$ or $M(z)$.
\begin{RHP}
\label{rhp:reflection_RHP}
    Given spectral data $\mathcal{D}(t)$, find a $2 \times 2$ matrix-valued function $M(z)$ such that
\begin{itemize}
	\item $\ {M}(z)$ is meromorphic in  $ \C \setminus \R $, with simple poles at the points $\{ \lambda_{j}, \overline{\lambda_{j}}\}_{j=1}^{N}$.
    \item For $z \in \mathbb{R}$, $M$ has boundary values $M_{+}(z)$ and $M_{-}(z)$, which satisfy the jump relation 
        \begin{equation}
        \label{eq:MJump}
           M_+(z)=\  M_-(z) \  J_M(z), \quad  J_M = \begin{pmatrix}
                1+|r(z)|^2 & \wo{r(z)}e^{-2i\theta(z;x,t)} \\
                r(z)e^{2i\theta(z;x,t)} & 1
            \end{pmatrix},
        \end{equation}
where $\theta(z;x,t) = zx + z^2t$.
	\item At each of the poles $\lambda_{j}$ in the upper half-plane, $\ {M}(z)$ satisfies the residue condition 
	\begin{equation}
    \label{eq:rescondCP}
		\res_{\lambda_j} M(z) = \lim_{z \to \lambda_j } \left[ M(z)\begin{pmatrix}
			0 & 0 \\
			c_j e^{2i\theta(\lambda_j;x,t)} & 0
		\end{pmatrix}\right]\,,
    \end{equation}
and at each pole $\overline{\lambda_{j}}$ in the lower half-plane:
\begin{equation}
    \label{eq:rescondCM}
\res_{\wo {\lambda_j}} M(z) = \lim_{z \to \wo \lambda_j } \left[ M(z) \begin{pmatrix}
			0 & -\wo c_j e^{-2i\theta(\wo \lambda_j;x,t)} \\
			0 & 0
		\end{pmatrix}\right].
	\end{equation}
\item $\ {M}(z) = \  I + O(z^{-1})$ as $z \to \infty$.
\end{itemize}
\end{RHP}
Existence and uniqueness for this Riemann-Hilbert problem follows from what are by now standard arguments:  uniqueness follows from Liouville's theorem, and existence follows from a careful application of a vanishing lemma as described in \cite{Zhou89}.  The solution $\psi(x,t)$ to the NLS equation corresponding to the initial data $\psi(x)$, assuming that $\psi,\psi_x\in L^1(\R)$, and determined by the scattering data $\mathcal{D}(t)$ is obtained from $M(z;x,t)$ through the $z \to \infty$ normalization:
\begin{eqnarray}
M(z;x,t) = I + \frac{1}{2 i z} \pmtwo{-\int_{x}^{\infty} |\psi(s,t)|^{2}\di s}{\psi(x,t)}{\overline{\psi(x,t)}}{\int_{x}^{\infty} |\psi(s,t)|^{2}\di s}
+ \mathcal{O} \left(\frac{1}{z^{2}} \right).
\end{eqnarray}
We mention that the RHP approach to solve non-linear integrable PDEs is not just a relevant analytical tool, but it is also numerical one. Indeed, several authors \cite{Bilman2023,Bilman2020,gkogkou2024,Trogdon2012}  employed this method to numerically analyze various non-linear integrable PDEs.

\noindent{\bf Notation}:  We will use $\mathcal{D}$ to denote the scattering data at $t=0$ (i.e. $\mathcal{D}= \mathcal{D}(0)$).  In order to emphasize the dependence on the scattering data, we will occasionally write
\begin{eqnarray}
\psi(x,t;\mathcal{D})
\end{eqnarray}
to represent the solution to the NLS equation determined by the initial scattering data $\mathcal{D}$.  

\subsection{\texorpdfstring{$N$}{N}-soliton solutions and soliton gasses}
\label{sec:introsolgas}
In this paper, we are interested in $N$-soliton solutions, for which $r(z) \equiv 0$.  Under this assumption, the above Riemann-Hilbert problem simplifies.  Indeed, the jump relation becomes trivial:  $M_{+}(z) = M_{-}(z)$ for $z \in \mathbb{R}$, and this implies that now $M$ is a meromorphic function of $z$.  The Riemann-Hilbert problem is referred to as a meromorphic Riemann-Hilbert problem, for which asymptotic analysis started in \cite{Kamvissis2003} where the authors studied a semi-classical scaling of the NLS equation.  (See also  \cite{BaikKriechMcLMiller}, where the authors studied the asymptotic behavior of discrete orthogonal polynomials, through the analysis of meromorphic Riemann-Hilbert problems).

The original set of kinetic equations in \cite{Z71} were derived under the loose assumption of a dilute gas of solitons, described by an evolving density function $f(z,x,t)$.  This density function yields (in some sense) the number of solitons in a small volume element around $(z,x,t)$.  Of course the notion of a soliton in a small space-time window, let alone the idea that there might be a large number of solitons with slightly different eigenvalue parameters in that same space-time window, while at the same time supposing there is enough space between solitons to consider them to be dilute, was quite challenging from the point of view of analysis.

One experimental prediction in \cite{Z71} is that a ``tracer soliton'', a soliton whose amplitude is much bigger than all the other solitons, launched into this gas, should interact with the gas in such away that its velocity \textit{deforms} as it propagates, and the explicit computation of this time-dependent velocity, as a function of density of the gas as well as the tracer's initial eigenvalue parameter, yielded an experimental prediction which could in principle be tested.  About 30 years later, El \cite{El2003} extended the kinetic theory to a dense gas, in which a coupled system of equations was derived, with the evolving density coupled to the evolving velocity of a tracer.  Both \cite{Z71} and \cite{El2003} focused on the KdV equation, but this was extended by El and Kamchatnov in \cite{El05} to other integrable nonlinear partial differential equations, including the NLS equation.  Significant progress regarding the understanding of the kinetic equations arising in different settings, as well as more detailed analysis of the kinetic equations themselves, has continued, for example in \cite{El2020,Kuijlaars2021,Tovbis2022,Tovbis2024}.

However, only recently has there been any validation of the kinetic theory, in the form of robust numerical \cite{carbone2016macroscopic,congy2021soliton,roberti2021numerical} and experimental \cite{maiden2016observation,mossman2024observation} evidence (see the review manuscript \cite{suret2024soliton}).  There have also been a few limited analytical results, particularly \cite{Girotti2021,Girotti2023,girotti2024law,jenkins2024approximation}.

In \cite{Girotti2021} the authors considered the $N\to\infty$ limit of an $N$-soliton solution for the KdV equation, with normalization constants uniformly of size $\mathcal{O}\left(N^{-1} \right)$.  The $N \to \infty$ analysis was carried out under the assumption that $x$ and $t$ were bounded.  The fact that the normalization constants were \textit{vanishingly small as $N \to \infty$} provided a necessary foothold for the rigorous analysis.  This can be understood (intuitively) by considering the connection, in the case of a single soliton, between the normalization constant and the initial location $x_{0}$ of the soliton's peak:  
\begin{eqnarray}
x_{0} = \frac{1}{2 \eta}\log{ \left| \frac{c}{2 \eta} \right| }
\end{eqnarray}
(here $c$ is the normalization constant, and $\lambda = \xi + i \eta$ is the eigenvalue), so if $c \approx N^{-1}$, then $x_{0}$ is shifted a distance $\log{N}$ to the left.  One could then imagine that if one has an $N$-soliton solution with all the normalization constants of size $\mathcal{O}\left(N^{-1}\right)$, all the solitons are shifted far to the left, and what remains for finite values of $x$ is the accumulated tails of all these solitons.  The results in \cite{Girotti2021} are consistent with this intuition.  They prove convergence of the $N$-soliton solutions (as $N \to \infty$) to a ``soliton gas solution'' of the KdV equation, but only for $x$ and $t$ in compact sets.  They subsequently carry out an asymptotic analysis of the soliton gas solution for $|x| \to \infty$, and also for $t \to \infty$ with $\frac{|x|}{t}$ bounded.  In the large-time analysis of the soliton gas solution, they established the existence of an evolving density function, depending on the spectral parameter, space, and time, which describes the local, evolving solitonic content of the solution, providing a first analytical corroboration of the kinetic theory, and in the process showing that in the long-time regime, what emerges is a type of dense soliton gas, called a \textit{condensate}.  However, in \cite{Girotti2021} they did not consider the behavior of the original $N$-soliton solution on space-time scales that would overlap with their asymptotic analysis of the soliton gas solution.  

More recently, in \cite{Orsatti23,bertola2024,falqui2024} the authors considered the soliton gas and the breather gas for the NLS equation under the assumptions that the eigenvalues $\lambda_j$ accumulate on a domain $D\subset \C_+$ (and its complex conjugate $\wo D$) and norming constants uniformly $\mathcal{O}(N^{-1})$.

In \cite{Girotti2023}, the authors considered the mKdV equation and studied the interaction of a tracer soliton with a similar soliton gas with vanishingly small normalization constants.  In \cite{girotti2024law} the authors developed a probabilistic analysis of a soliton gas for the NLS equation, again under the assumption that the normalization constants are uniformly $\mathcal{O}\left( N^{-1} \right)$.  In other words, the ``bulk'' asymptotic behavior of $N$-soliton solutions, with normalization constants chosen to be bounded (away from both $0$ and $\infty$), has not been considered in the context of the kinetic theory of solitons.  (However, the works \cite{Kamvissis2003, JenkinsMcLNLS} concerning the semi-classically scaled NLS equation, could be thought of as examples of this scaling.)

\begin{remark}
Similarly, if $c$ is \textit{large}, then $x_{0}$ is shifted far to the right, and so the above intuitive suggests that if all the normalization constants are $\mathcal{O}\left( N \right)$, then again for $x$ and $t$ on compact sets, one only sees the accumulated tails of all the solitons.
\end{remark}

\subsection{\texorpdfstring{$N$}{N}-soliton solutions: condensate scaling}

The present manuscript considers the asymptotic behavior of $N$-soliton solutions of the NLS equation, with eigenvalues accumulating on a horizontal line in $\mathbb{C}_{+}$ (and its complex conjugate contour in $\mathbb{C}_{-}$), with normalization constants that are bounded, and bounded away from $0$.  Assuming the limit $N \to \infty$, such a configuration has been referred to (see, for example, \cite{suret2024soliton}) as a \textit{soliton condensate}, accompanied by the conjecture that soliton condensates achieve maximal soliton density.  

We will refer to the sequence of such $N$-soliton solutions as an $N$-soliton condensate, and we will refer to the associated sequence of scattering data as an \textbf{$N$-soliton condensate scattering data}. Specifically, we consider the following assumptions.
\begin{Assumption}[$N$-soliton condensate scattering data] 
\label{ass:sym}
For a positive integer $N$, we generate the scattering data as follows:
\begin{enumerate}
\item  There are two positive constants $a$ and $b$, which define a line segment $ \eta_{1} = [- a + i b, a + i b]$, which we orient from right to left,  and its complex conjugate $\eta_{2} = [-a-ib, a - ib]$, which we orient from left to right.  We define
\begin{eqnarray}
A = a + i b
\end{eqnarray}
to be the right-most endpoint of $\eta_{1}$.
\item There is a density function $\rho(z)$ which is analytic in a neighborhood $\cU$ of the domain enclosed by the line segment $\eta_{1}$ and the arc of the circle of radius $|A|$ centered at $0$ connecting $A$ with $\overline{A}$ ($\rho$ is also assumed to be analytic in the Schwarz reflection of $\cU$).  The density $\rho$ satisfies
    \begin{eqnarray}
    \label{eq:rhoConds1}
    &&
\rho(z)>0 \mbox{ for } z = t + ib, \ t\in(-a,a), \\
&&
\int_{-a}^a \rho(t+ib)\di t = 1, \\
\label{eq:rhoConds4}
&&\rho(\wo z) = \wo{\rho(z)}.
\end{eqnarray}

\item The eigenvalues $\{\lambda_j\}_{j=1}^{N}$ on $\eta_{1}$ are chosen to satisfy
\begin{eqnarray}
\label{eq:lamsamp}
\int_{-a}^{\Re(\lambda_j)} \rho(x + ib)\di x = \frac{2j-1}{2N}, \quad \Im(\lambda_j) = b, \ j = 1, \ldots, N.
\end{eqnarray}
See Figure \ref{fig:poles} for an example with uniform density.
\item There is a ``normalization constant sampling function'' $h(z)$ which is analytic in $\cU$ and $\overline{\cU}$ satisfying $ h(\wo{z}) = \wo{h(z)}$, and $h(z)\neq0$ for $z\in \eta_1\cup\eta_2$.

\item For each positive integer $N$, the normalization constants $\{c_{j} \}_{j=1}^{N}$ are chosen to satisfy \begin{eqnarray}
\label{eq:normsamp}
         c_j= h(\lambda_j), \ \ j = 1, \ldots, N \ ,
\end{eqnarray}

\item There are two contours, $\Gamma_{1} \in \mathbb{C}_{+}$ and $\Gamma_{2} \in \mathbb{C}_{-}$, which are Schwarz reflections of each other (and so they are determined by $\Gamma_{1}$).  The contour $\Gamma_{1}$ is a simple, closed, analytic curve that encircles the interval $\eta_{1}$.    See Figure \ref{fig:contour}.
\end{enumerate}

\end{Assumption}

\begin{figure}
		\centering
	\tikzset{every picture/.style={line width=0.75pt}}         

\begin{tikzpicture}[x=0.5pt,y=0.5pt,yscale=-1,xscale=1]

\draw    (100,119.33) -- (555,120.33) ;

\draw  [color={rgb, 255:red, 208; green, 2; blue, 27 }  ,draw opacity=1 ][fill={rgb, 255:red, 208; green, 2; blue, 27 }  ,fill opacity=1 ] (111.07,114.09) .. controls (113.96,114.13) and (116.28,116.5) .. (116.24,119.39) .. controls (116.2,122.28) and (113.82,124.6) .. (110.93,124.56) .. controls (108.04,124.52) and (105.73,122.14) .. (105.77,119.25) .. controls (105.81,116.36) and (108.18,114.05) .. (111.07,114.09) -- cycle ;

\draw  [color={rgb, 255:red, 208; green, 2; blue, 27 }  ,draw opacity=1 ][fill={rgb, 255:red, 208; green, 2; blue, 27 }  ,fill opacity=1 ] (142.27,114.49) .. controls (145.16,114.53) and (147.48,116.9) .. (147.44,119.79) .. controls (147.4,122.68) and (145.02,125) .. (142.13,124.96) .. controls (139.24,124.92) and (136.93,122.54) .. (136.97,119.65) .. controls (137.01,116.76) and (139.38,114.45) .. (142.27,114.49) -- cycle ;

\draw  [color={rgb, 255:red, 208; green, 2; blue, 27 }  ,draw opacity=1 ][fill={rgb, 255:red, 208; green, 2; blue, 27 }  ,fill opacity=1 ] (176.67,114.89) .. controls (179.56,114.93) and (181.88,117.3) .. (181.84,120.19) .. controls (181.8,123.08) and (179.42,125.4) .. (176.53,125.36) .. controls (173.64,125.32) and (171.33,122.94) .. (171.37,120.05) .. controls (171.41,117.16) and (173.78,114.85) .. (176.67,114.89) -- cycle ;

\draw  [color={rgb, 255:red, 208; green, 2; blue, 27 }  ,draw opacity=1 ][fill={rgb, 255:red, 208; green, 2; blue, 27 }  ,fill opacity=1 ] (208.67,114.49) .. controls (211.56,114.53) and (213.88,116.9) .. (213.84,119.79) .. controls (213.8,122.68) and (211.42,125) .. (208.53,124.96) .. controls (205.64,124.92) and (203.33,122.54) .. (203.37,119.65) .. controls (203.41,116.76) and (205.78,114.45) .. (208.67,114.49) -- cycle ;

\draw  [color={rgb, 255:red, 208; green, 2; blue, 27 }  ,draw opacity=1 ][fill={rgb, 255:red, 208; green, 2; blue, 27 }  ,fill opacity=1 ] (241.87,114.89) .. controls (244.76,114.93) and (247.08,117.3) .. (247.04,120.19) .. controls (247,123.08) and (244.62,125.4) .. (241.73,125.36) .. controls (238.84,125.32) and (236.53,122.94) .. (236.57,120.05) .. controls (236.61,117.16) and (238.98,114.85) .. (241.87,114.89) -- cycle ;

\draw  [color={rgb, 255:red, 208; green, 2; blue, 27 }  ,draw opacity=1 ][fill={rgb, 255:red, 208; green, 2; blue, 27 }  ,fill opacity=1 ] (279.87,114.49) .. controls (282.76,114.53) and (285.08,116.9) .. (285.04,119.79) .. controls (285,122.68) and (282.62,125) .. (279.73,124.96) .. controls (276.84,124.92) and (274.53,122.54) .. (274.57,119.65) .. controls (274.61,116.76) and (276.98,114.45) .. (279.87,114.49) -- cycle ;

\draw  [color={rgb, 255:red, 208; green, 2; blue, 27 }  ,draw opacity=1 ][fill={rgb, 255:red, 208; green, 2; blue, 27 }  ,fill opacity=1 ] (314.67,114.49) .. controls (317.56,114.53) and (319.88,116.9) .. (319.84,119.79) .. controls (319.8,122.68) and (317.42,125) .. (314.53,124.96) .. controls (311.64,124.92) and (309.33,122.54) .. (309.37,119.65) .. controls (309.41,116.76) and (311.78,114.45) .. (314.67,114.49) -- cycle ;

\draw  [color={rgb, 255:red, 208; green, 2; blue, 27 }  ,draw opacity=1 ][fill={rgb, 255:red, 208; green, 2; blue, 27 }  ,fill opacity=1 ] (348.67,114.49) .. controls (351.56,114.53) and (353.88,116.9) .. (353.84,119.79) .. controls (353.8,122.68) and (351.42,125) .. (348.53,124.96) .. controls (345.64,124.92) and (343.33,122.54) .. (343.37,119.65) .. controls (343.41,116.76) and (345.78,114.45) .. (348.67,114.49) -- cycle ;

\draw  [color={rgb, 255:red, 208; green, 2; blue, 27 }  ,draw opacity=1 ][fill={rgb, 255:red, 208; green, 2; blue, 27 }  ,fill opacity=1 ] (380.67,115.29) .. controls (383.56,115.33) and (385.88,117.7) .. (385.84,120.59) .. controls (385.8,123.48) and (383.42,125.8) .. (380.53,125.76) .. controls (377.64,125.72) and (375.33,123.34) .. (375.37,120.45) .. controls (375.41,117.56) and (377.78,115.25) .. (380.67,115.29) -- cycle ;

\draw  [color={rgb, 255:red, 208; green, 2; blue, 27 }  ,draw opacity=1 ][fill={rgb, 255:red, 208; green, 2; blue, 27 }  ,fill opacity=1 ] (409.87,115.29) .. controls (412.76,115.33) and (415.08,117.7) .. (415.04,120.59) .. controls (415,123.48) and (412.62,125.8) .. (409.73,125.76) .. controls (406.84,125.72) and (404.53,123.34) .. (404.57,120.45) .. controls (404.61,117.56) and (406.98,115.25) .. (409.87,115.29) -- cycle ;
 
\draw  [color={rgb, 255:red, 208; green, 2; blue, 27 }  ,draw opacity=1 ][fill={rgb, 255:red, 208; green, 2; blue, 27 }  ,fill opacity=1 ] (439.87,114.89) .. controls (442.76,114.93) and (445.08,117.3) .. (445.04,120.19) .. controls (445,123.08) and (442.62,125.4) .. (439.73,125.36) .. controls (436.84,125.32) and (434.53,122.94) .. (434.57,120.05) .. controls (434.61,117.16) and (436.98,114.85) .. (439.87,114.89) -- cycle ;

\draw  [color={rgb, 255:red, 208; green, 2; blue, 27 }  ,draw opacity=1 ][fill={rgb, 255:red, 208; green, 2; blue, 27 }  ,fill opacity=1 ] (471.87,114.49) .. controls (474.76,114.53) and (477.08,116.9) .. (477.04,119.79) .. controls (477,122.68) and (474.62,125) .. (471.73,124.96) .. controls (468.84,124.92) and (466.53,122.54) .. (466.57,119.65) .. controls (466.61,116.76) and (468.98,114.45) .. (471.87,114.49) -- cycle ;

\draw  [color={rgb, 255:red, 208; green, 2; blue, 27 }  ,draw opacity=1 ][fill={rgb, 255:red, 208; green, 2; blue, 27 }  ,fill opacity=1 ] (501.47,114.89) .. controls (504.36,114.93) and (506.68,117.3) .. (506.64,120.19) .. controls (506.6,123.08) and (504.22,125.4) .. (501.33,125.36) .. controls (498.44,125.32) and (496.13,122.94) .. (496.17,120.05) .. controls (496.21,117.16) and (498.58,114.85) .. (501.47,114.89) -- cycle ;

\draw  [color={rgb, 255:red, 208; green, 2; blue, 27 }  ,draw opacity=1 ][fill={rgb, 255:red, 208; green, 2; blue, 27 }  ,fill opacity=1 ] (529.47,114.89) .. controls (532.36,114.93) and (534.68,117.3) .. (534.64,120.19) .. controls (534.6,123.08) and (532.22,125.4) .. (529.33,125.36) .. controls (526.44,125.32) and (524.13,122.94) .. (524.17,120.05) .. controls (524.21,117.16) and (526.58,114.85) .. (529.47,114.89) -- cycle ;

\draw    (100.4,209.73) -- (555.4,210.73) ;

\draw  [color={rgb, 255:red, 208; green, 2; blue, 27 }  ,draw opacity=1 ][fill={rgb, 255:red, 208; green, 2; blue, 27 }  ,fill opacity=1 ] (111.47,204.49) .. controls (114.36,204.53) and (116.68,206.9) .. (116.64,209.79) .. controls (116.6,212.68) and (114.22,215) .. (111.33,214.96) .. controls (108.44,214.92) and (106.13,212.54) .. (106.17,209.65) .. controls (106.21,206.76) and (108.58,204.45) .. (111.47,204.49) -- cycle ;

\draw  [color={rgb, 255:red, 208; green, 2; blue, 27 }  ,draw opacity=1 ][fill={rgb, 255:red, 208; green, 2; blue, 27 }  ,fill opacity=1 ] (142.67,204.89) .. controls (145.56,204.93) and (147.88,207.3) .. (147.84,210.19) .. controls (147.8,213.08) and (145.42,215.4) .. (142.53,215.36) .. controls (139.64,215.32) and (137.33,212.94) .. (137.37,210.05) .. controls (137.41,207.16) and (139.78,204.85) .. (142.67,204.89) -- cycle ;

\draw  [color={rgb, 255:red, 208; green, 2; blue, 27 }  ,draw opacity=1 ][fill={rgb, 255:red, 208; green, 2; blue, 27 }  ,fill opacity=1 ] (177.07,205.29) .. controls (179.96,205.33) and (182.28,207.7) .. (182.24,210.59) .. controls (182.2,213.48) and (179.82,215.8) .. (176.93,215.76) .. controls (174.04,215.72) and (171.73,213.34) .. (171.77,210.45) .. controls (171.81,207.56) and (174.18,205.25) .. (177.07,205.29) -- cycle ;

\draw  [color={rgb, 255:red, 208; green, 2; blue, 27 }  ,draw opacity=1 ][fill={rgb, 255:red, 208; green, 2; blue, 27 }  ,fill opacity=1 ] (209.07,204.89) .. controls (211.96,204.93) and (214.28,207.3) .. (214.24,210.19) .. controls (214.2,213.08) and (211.82,215.4) .. (208.93,215.36) .. controls (206.04,215.32) and (203.73,212.94) .. (203.77,210.05) .. controls (203.81,207.16) and (206.18,204.85) .. (209.07,204.89) -- cycle ;

\draw  [color={rgb, 255:red, 208; green, 2; blue, 27 }  ,draw opacity=1 ][fill={rgb, 255:red, 208; green, 2; blue, 27 }  ,fill opacity=1 ] (242.27,205.29) .. controls (245.16,205.33) and (247.48,207.7) .. (247.44,210.59) .. controls (247.4,213.48) and (245.02,215.8) .. (242.13,215.76) .. controls (239.24,215.72) and (236.93,213.34) .. (236.97,210.45) .. controls (237.01,207.56) and (239.38,205.25) .. (242.27,205.29) -- cycle ;
 
\draw  [color={rgb, 255:red, 208; green, 2; blue, 27 }  ,draw opacity=1 ][fill={rgb, 255:red, 208; green, 2; blue, 27 }  ,fill opacity=1 ] (280.27,204.89) .. controls (283.16,204.93) and (285.48,207.3) .. (285.44,210.19) .. controls (285.4,213.08) and (283.02,215.4) .. (280.13,215.36) .. controls (277.24,215.32) and (274.93,212.94) .. (274.97,210.05) .. controls (275.01,207.16) and (277.38,204.85) .. (280.27,204.89) -- cycle ;

\draw  [color={rgb, 255:red, 208; green, 2; blue, 27 }  ,draw opacity=1 ][fill={rgb, 255:red, 208; green, 2; blue, 27 }  ,fill opacity=1 ] (315.07,204.89) .. controls (317.96,204.93) and (320.28,207.3) .. (320.24,210.19) .. controls (320.2,213.08) and (317.82,215.4) .. (314.93,215.36) .. controls (312.04,215.32) and (309.73,212.94) .. (309.77,210.05) .. controls (309.81,207.16) and (312.18,204.85) .. (315.07,204.89) -- cycle ;

\draw  [color={rgb, 255:red, 208; green, 2; blue, 27 }  ,draw opacity=1 ][fill={rgb, 255:red, 208; green, 2; blue, 27 }  ,fill opacity=1 ] (349.07,204.89) .. controls (351.96,204.93) and (354.28,207.3) .. (354.24,210.19) .. controls (354.2,213.08) and (351.82,215.4) .. (348.93,215.36) .. controls (346.04,215.32) and (343.73,212.94) .. (343.77,210.05) .. controls (343.81,207.16) and (346.18,204.85) .. (349.07,204.89) -- cycle ;

\draw  [color={rgb, 255:red, 208; green, 2; blue, 27 }  ,draw opacity=1 ][fill={rgb, 255:red, 208; green, 2; blue, 27 }  ,fill opacity=1 ] (381.07,205.69) .. controls (383.96,205.73) and (386.28,208.1) .. (386.24,210.99) .. controls (386.2,213.88) and (383.82,216.2) .. (380.93,216.16) .. controls (378.04,216.12) and (375.73,213.74) .. (375.77,210.85) .. controls (375.81,207.96) and (378.18,205.65) .. (381.07,205.69) -- cycle ;

\draw  [color={rgb, 255:red, 208; green, 2; blue, 27 }  ,draw opacity=1 ][fill={rgb, 255:red, 208; green, 2; blue, 27 }  ,fill opacity=1 ] (410.27,205.69) .. controls (413.16,205.73) and (415.48,208.1) .. (415.44,210.99) .. controls (415.4,213.88) and (413.02,216.2) .. (410.13,216.16) .. controls (407.24,216.12) and (404.93,213.74) .. (404.97,210.85) .. controls (405.01,207.96) and (407.38,205.65) .. (410.27,205.69) -- cycle ;

\draw  [color={rgb, 255:red, 208; green, 2; blue, 27 }  ,draw opacity=1 ][fill={rgb, 255:red, 208; green, 2; blue, 27 }  ,fill opacity=1 ] (440.27,205.29) .. controls (443.16,205.33) and (445.48,207.7) .. (445.44,210.59) .. controls (445.4,213.48) and (443.02,215.8) .. (440.13,215.76) .. controls (437.24,215.72) and (434.93,213.34) .. (434.97,210.45) .. controls (435.01,207.56) and (437.38,205.25) .. (440.27,205.29) -- cycle ;

\draw  [color={rgb, 255:red, 208; green, 2; blue, 27 }  ,draw opacity=1 ][fill={rgb, 255:red, 208; green, 2; blue, 27 }  ,fill opacity=1 ] (472.27,204.89) .. controls (475.16,204.93) and (477.48,207.3) .. (477.44,210.19) .. controls (477.4,213.08) and (475.02,215.4) .. (472.13,215.36) .. controls (469.24,215.32) and (466.93,212.94) .. (466.97,210.05) .. controls (467.01,207.16) and (469.38,204.85) .. (472.27,204.89) -- cycle ;

\draw  [color={rgb, 255:red, 208; green, 2; blue, 27 }  ,draw opacity=1 ][fill={rgb, 255:red, 208; green, 2; blue, 27 }  ,fill opacity=1 ] (501.87,205.29) .. controls (504.76,205.33) and (507.08,207.7) .. (507.04,210.59) .. controls (507,213.48) and (504.62,215.8) .. (501.73,215.76) .. controls (498.84,215.72) and (496.53,213.34) .. (496.57,210.45) .. controls (496.61,207.56) and (498.98,205.25) .. (501.87,205.29) -- cycle ;

\draw  [color={rgb, 255:red, 208; green, 2; blue, 27 }  ,draw opacity=1 ][fill={rgb, 255:red, 208; green, 2; blue, 27 }  ,fill opacity=1 ] (529.87,205.29) .. controls (532.76,205.33) and (535.08,207.7) .. (535.04,210.59) .. controls (535,213.48) and (532.62,215.8) .. (529.73,215.76) .. controls (526.84,215.72) and (524.53,213.34) .. (524.57,210.45) .. controls (524.61,207.56) and (526.98,205.25) .. (529.87,205.29) -- cycle ;

\draw    (70.2,160.6) -- (583.4,161) ;

\draw (110.13,158.99) node [anchor=south east] [inner sep=0.75pt]    {$\mathbb{R}$};

\draw (348.67,111.09) node [anchor=south] [inner sep=0.75pt]    {$\lambda _{j}$};
\end{tikzpicture}

		\caption{Poles $\lambda_j$ and their conjugates}
		\label{fig:poles}
	\end{figure}

	\begin{figure}
		\centering

\tikzset{every picture/.style={line width=0.75pt}}          

\begin{tikzpicture}[x=0.5pt,y=0.5pt,yscale=-1,xscale=1]

\draw    (100,119.33) -- (555,120.33) ;

\draw  [color={rgb, 255:red, 208; green, 2; blue, 27 }  ,draw opacity=1 ][fill={rgb, 255:red, 208; green, 2; blue, 27 }  ,fill opacity=1 ] (111.07,114.09) .. controls (113.96,114.13) and (116.28,116.5) .. (116.24,119.39) .. controls (116.2,122.28) and (113.82,124.6) .. (110.93,124.56) .. controls (108.04,124.52) and (105.73,122.14) .. (105.77,119.25) .. controls (105.81,116.36) and (108.18,114.05) .. (111.07,114.09) -- cycle ;

\draw  [color={rgb, 255:red, 208; green, 2; blue, 27 }  ,draw opacity=1 ][fill={rgb, 255:red, 208; green, 2; blue, 27 }  ,fill opacity=1 ] (142.27,114.49) .. controls (145.16,114.53) and (147.48,116.9) .. (147.44,119.79) .. controls (147.4,122.68) and (145.02,125) .. (142.13,124.96) .. controls (139.24,124.92) and (136.93,122.54) .. (136.97,119.65) .. controls (137.01,116.76) and (139.38,114.45) .. (142.27,114.49) -- cycle ;

\draw  [color={rgb, 255:red, 208; green, 2; blue, 27 }  ,draw opacity=1 ][fill={rgb, 255:red, 208; green, 2; blue, 27 }  ,fill opacity=1 ] (176.67,114.89) .. controls (179.56,114.93) and (181.88,117.3) .. (181.84,120.19) .. controls (181.8,123.08) and (179.42,125.4) .. (176.53,125.36) .. controls (173.64,125.32) and (171.33,122.94) .. (171.37,120.05) .. controls (171.41,117.16) and (173.78,114.85) .. (176.67,114.89) -- cycle ;

\draw  [color={rgb, 255:red, 208; green, 2; blue, 27 }  ,draw opacity=1 ][fill={rgb, 255:red, 208; green, 2; blue, 27 }  ,fill opacity=1 ] (208.67,114.49) .. controls (211.56,114.53) and (213.88,116.9) .. (213.84,119.79) .. controls (213.8,122.68) and (211.42,125) .. (208.53,124.96) .. controls (205.64,124.92) and (203.33,122.54) .. (203.37,119.65) .. controls (203.41,116.76) and (205.78,114.45) .. (208.67,114.49) -- cycle ;

\draw  [color={rgb, 255:red, 208; green, 2; blue, 27 }  ,draw opacity=1 ][fill={rgb, 255:red, 208; green, 2; blue, 27 }  ,fill opacity=1 ] (241.87,114.89) .. controls (244.76,114.93) and (247.08,117.3) .. (247.04,120.19) .. controls (247,123.08) and (244.62,125.4) .. (241.73,125.36) .. controls (238.84,125.32) and (236.53,122.94) .. (236.57,120.05) .. controls (236.61,117.16) and (238.98,114.85) .. (241.87,114.89) -- cycle ;

\draw  [color={rgb, 255:red, 208; green, 2; blue, 27 }  ,draw opacity=1 ][fill={rgb, 255:red, 208; green, 2; blue, 27 }  ,fill opacity=1 ] (279.87,114.49) .. controls (282.76,114.53) and (285.08,116.9) .. (285.04,119.79) .. controls (285,122.68) and (282.62,125) .. (279.73,124.96) .. controls (276.84,124.92) and (274.53,122.54) .. (274.57,119.65) .. controls (274.61,116.76) and (276.98,114.45) .. (279.87,114.49) -- cycle ;

\draw  [color={rgb, 255:red, 208; green, 2; blue, 27 }  ,draw opacity=1 ][fill={rgb, 255:red, 208; green, 2; blue, 27 }  ,fill opacity=1 ] (314.67,114.49) .. controls (317.56,114.53) and (319.88,116.9) .. (319.84,119.79) .. controls (319.8,122.68) and (317.42,125) .. (314.53,124.96) .. controls (311.64,124.92) and (309.33,122.54) .. (309.37,119.65) .. controls (309.41,116.76) and (311.78,114.45) .. (314.67,114.49) -- cycle ;

\draw  [color={rgb, 255:red, 208; green, 2; blue, 27 }  ,draw opacity=1 ][fill={rgb, 255:red, 208; green, 2; blue, 27 }  ,fill opacity=1 ] (348.67,114.49) .. controls (351.56,114.53) and (353.88,116.9) .. (353.84,119.79) .. controls (353.8,122.68) and (351.42,125) .. (348.53,124.96) .. controls (345.64,124.92) and (343.33,122.54) .. (343.37,119.65) .. controls (343.41,116.76) and (345.78,114.45) .. (348.67,114.49) -- cycle ;
 
\draw  [color={rgb, 255:red, 208; green, 2; blue, 27 }  ,draw opacity=1 ][fill={rgb, 255:red, 208; green, 2; blue, 27 }  ,fill opacity=1 ] (380.67,115.29) .. controls (383.56,115.33) and (385.88,117.7) .. (385.84,120.59) .. controls (385.8,123.48) and (383.42,125.8) .. (380.53,125.76) .. controls (377.64,125.72) and (375.33,123.34) .. (375.37,120.45) .. controls (375.41,117.56) and (377.78,115.25) .. (380.67,115.29) -- cycle ;
 
\draw  [color={rgb, 255:red, 208; green, 2; blue, 27 }  ,draw opacity=1 ][fill={rgb, 255:red, 208; green, 2; blue, 27 }  ,fill opacity=1 ] (409.87,115.29) .. controls (412.76,115.33) and (415.08,117.7) .. (415.04,120.59) .. controls (415,123.48) and (412.62,125.8) .. (409.73,125.76) .. controls (406.84,125.72) and (404.53,123.34) .. (404.57,120.45) .. controls (404.61,117.56) and (406.98,115.25) .. (409.87,115.29) -- cycle ;

\draw  [color={rgb, 255:red, 208; green, 2; blue, 27 }  ,draw opacity=1 ][fill={rgb, 255:red, 208; green, 2; blue, 27 }  ,fill opacity=1 ] (439.87,114.89) .. controls (442.76,114.93) and (445.08,117.3) .. (445.04,120.19) .. controls (445,123.08) and (442.62,125.4) .. (439.73,125.36) .. controls (436.84,125.32) and (434.53,122.94) .. (434.57,120.05) .. controls (434.61,117.16) and (436.98,114.85) .. (439.87,114.89) -- cycle ;

\draw  [color={rgb, 255:red, 208; green, 2; blue, 27 }  ,draw opacity=1 ][fill={rgb, 255:red, 208; green, 2; blue, 27 }  ,fill opacity=1 ] (471.87,114.49) .. controls (474.76,114.53) and (477.08,116.9) .. (477.04,119.79) .. controls (477,122.68) and (474.62,125) .. (471.73,124.96) .. controls (468.84,124.92) and (466.53,122.54) .. (466.57,119.65) .. controls (466.61,116.76) and (468.98,114.45) .. (471.87,114.49) -- cycle ;

\draw  [color={rgb, 255:red, 208; green, 2; blue, 27 }  ,draw opacity=1 ][fill={rgb, 255:red, 208; green, 2; blue, 27 }  ,fill opacity=1 ] (501.47,114.89) .. controls (504.36,114.93) and (506.68,117.3) .. (506.64,120.19) .. controls (506.6,123.08) and (504.22,125.4) .. (501.33,125.36) .. controls (498.44,125.32) and (496.13,122.94) .. (496.17,120.05) .. controls (496.21,117.16) and (498.58,114.85) .. (501.47,114.89) -- cycle ;

\draw  [color={rgb, 255:red, 208; green, 2; blue, 27 }  ,draw opacity=1 ][fill={rgb, 255:red, 208; green, 2; blue, 27 }  ,fill opacity=1 ] (529.47,114.89) .. controls (532.36,114.93) and (534.68,117.3) .. (534.64,120.19) .. controls (534.6,123.08) and (532.22,125.4) .. (529.33,125.36) .. controls (526.44,125.32) and (524.13,122.94) .. (524.17,120.05) .. controls (524.21,117.16) and (526.58,114.85) .. (529.47,114.89) -- cycle ;
 
\draw    (100.4,209.73) -- (555.4,210.73) ;

\draw  [color={rgb, 255:red, 208; green, 2; blue, 27 }  ,draw opacity=1 ][fill={rgb, 255:red, 208; green, 2; blue, 27 }  ,fill opacity=1 ] (111.47,204.49) .. controls (114.36,204.53) and (116.68,206.9) .. (116.64,209.79) .. controls (116.6,212.68) and (114.22,215) .. (111.33,214.96) .. controls (108.44,214.92) and (106.13,212.54) .. (106.17,209.65) .. controls (106.21,206.76) and (108.58,204.45) .. (111.47,204.49) -- cycle ;

\draw  [color={rgb, 255:red, 208; green, 2; blue, 27 }  ,draw opacity=1 ][fill={rgb, 255:red, 208; green, 2; blue, 27 }  ,fill opacity=1 ] (142.67,204.89) .. controls (145.56,204.93) and (147.88,207.3) .. (147.84,210.19) .. controls (147.8,213.08) and (145.42,215.4) .. (142.53,215.36) .. controls (139.64,215.32) and (137.33,212.94) .. (137.37,210.05) .. controls (137.41,207.16) and (139.78,204.85) .. (142.67,204.89) -- cycle ;

\draw  [color={rgb, 255:red, 208; green, 2; blue, 27 }  ,draw opacity=1 ][fill={rgb, 255:red, 208; green, 2; blue, 27 }  ,fill opacity=1 ] (177.07,205.29) .. controls (179.96,205.33) and (182.28,207.7) .. (182.24,210.59) .. controls (182.2,213.48) and (179.82,215.8) .. (176.93,215.76) .. controls (174.04,215.72) and (171.73,213.34) .. (171.77,210.45) .. controls (171.81,207.56) and (174.18,205.25) .. (177.07,205.29) -- cycle ;

\draw  [color={rgb, 255:red, 208; green, 2; blue, 27 }  ,draw opacity=1 ][fill={rgb, 255:red, 208; green, 2; blue, 27 }  ,fill opacity=1 ] (209.07,204.89) .. controls (211.96,204.93) and (214.28,207.3) .. (214.24,210.19) .. controls (214.2,213.08) and (211.82,215.4) .. (208.93,215.36) .. controls (206.04,215.32) and (203.73,212.94) .. (203.77,210.05) .. controls (203.81,207.16) and (206.18,204.85) .. (209.07,204.89) -- cycle ;

\draw  [color={rgb, 255:red, 208; green, 2; blue, 27 }  ,draw opacity=1 ][fill={rgb, 255:red, 208; green, 2; blue, 27 }  ,fill opacity=1 ] (242.27,205.29) .. controls (245.16,205.33) and (247.48,207.7) .. (247.44,210.59) .. controls (247.4,213.48) and (245.02,215.8) .. (242.13,215.76) .. controls (239.24,215.72) and (236.93,213.34) .. (236.97,210.45) .. controls (237.01,207.56) and (239.38,205.25) .. (242.27,205.29) -- cycle ;

\draw  [color={rgb, 255:red, 208; green, 2; blue, 27 }  ,draw opacity=1 ][fill={rgb, 255:red, 208; green, 2; blue, 27 }  ,fill opacity=1 ] (280.27,204.89) .. controls (283.16,204.93) and (285.48,207.3) .. (285.44,210.19) .. controls (285.4,213.08) and (283.02,215.4) .. (280.13,215.36) .. controls (277.24,215.32) and (274.93,212.94) .. (274.97,210.05) .. controls (275.01,207.16) and (277.38,204.85) .. (280.27,204.89) -- cycle ;

\draw  [color={rgb, 255:red, 208; green, 2; blue, 27 }  ,draw opacity=1 ][fill={rgb, 255:red, 208; green, 2; blue, 27 }  ,fill opacity=1 ] (315.07,204.89) .. controls (317.96,204.93) and (320.28,207.3) .. (320.24,210.19) .. controls (320.2,213.08) and (317.82,215.4) .. (314.93,215.36) .. controls (312.04,215.32) and (309.73,212.94) .. (309.77,210.05) .. controls (309.81,207.16) and (312.18,204.85) .. (315.07,204.89) -- cycle ;

\draw  [color={rgb, 255:red, 208; green, 2; blue, 27 }  ,draw opacity=1 ][fill={rgb, 255:red, 208; green, 2; blue, 27 }  ,fill opacity=1 ] (349.07,204.89) .. controls (351.96,204.93) and (354.28,207.3) .. (354.24,210.19) .. controls (354.2,213.08) and (351.82,215.4) .. (348.93,215.36) .. controls (346.04,215.32) and (343.73,212.94) .. (343.77,210.05) .. controls (343.81,207.16) and (346.18,204.85) .. (349.07,204.89) -- cycle ;
 
\draw  [color={rgb, 255:red, 208; green, 2; blue, 27 }  ,draw opacity=1 ][fill={rgb, 255:red, 208; green, 2; blue, 27 }  ,fill opacity=1 ] (381.07,205.69) .. controls (383.96,205.73) and (386.28,208.1) .. (386.24,210.99) .. controls (386.2,213.88) and (383.82,216.2) .. (380.93,216.16) .. controls (378.04,216.12) and (375.73,213.74) .. (375.77,210.85) .. controls (375.81,207.96) and (378.18,205.65) .. (381.07,205.69) -- cycle ;

\draw  [color={rgb, 255:red, 208; green, 2; blue, 27 }  ,draw opacity=1 ][fill={rgb, 255:red, 208; green, 2; blue, 27 }  ,fill opacity=1 ] (410.27,205.69) .. controls (413.16,205.73) and (415.48,208.1) .. (415.44,210.99) .. controls (415.4,213.88) and (413.02,216.2) .. (410.13,216.16) .. controls (407.24,216.12) and (404.93,213.74) .. (404.97,210.85) .. controls (405.01,207.96) and (407.38,205.65) .. (410.27,205.69) -- cycle ;

\draw  [color={rgb, 255:red, 208; green, 2; blue, 27 }  ,draw opacity=1 ][fill={rgb, 255:red, 208; green, 2; blue, 27 }  ,fill opacity=1 ] (440.27,205.29) .. controls (443.16,205.33) and (445.48,207.7) .. (445.44,210.59) .. controls (445.4,213.48) and (443.02,215.8) .. (440.13,215.76) .. controls (437.24,215.72) and (434.93,213.34) .. (434.97,210.45) .. controls (435.01,207.56) and (437.38,205.25) .. (440.27,205.29) -- cycle ;

\draw  [color={rgb, 255:red, 208; green, 2; blue, 27 }  ,draw opacity=1 ][fill={rgb, 255:red, 208; green, 2; blue, 27 }  ,fill opacity=1 ] (472.27,204.89) .. controls (475.16,204.93) and (477.48,207.3) .. (477.44,210.19) .. controls (477.4,213.08) and (475.02,215.4) .. (472.13,215.36) .. controls (469.24,215.32) and (466.93,212.94) .. (466.97,210.05) .. controls (467.01,207.16) and (469.38,204.85) .. (472.27,204.89) -- cycle ;

\draw  [color={rgb, 255:red, 208; green, 2; blue, 27 }  ,draw opacity=1 ][fill={rgb, 255:red, 208; green, 2; blue, 27 }  ,fill opacity=1 ] (501.87,205.29) .. controls (504.76,205.33) and (507.08,207.7) .. (507.04,210.59) .. controls (507,213.48) and (504.62,215.8) .. (501.73,215.76) .. controls (498.84,215.72) and (496.53,213.34) .. (496.57,210.45) .. controls (496.61,207.56) and (498.98,205.25) .. (501.87,205.29) -- cycle ;

\draw  [color={rgb, 255:red, 208; green, 2; blue, 27 }  ,draw opacity=1 ][fill={rgb, 255:red, 208; green, 2; blue, 27 }  ,fill opacity=1 ] (529.87,205.29) .. controls (532.76,205.33) and (535.08,207.7) .. (535.04,210.59) .. controls (535,213.48) and (532.62,215.8) .. (529.73,215.76) .. controls (526.84,215.72) and (524.53,213.34) .. (524.57,210.45) .. controls (524.61,207.56) and (526.98,205.25) .. (529.87,205.29) -- cycle ;

\draw    (70.2,160.6) -- (583.4,161) ;

\draw  [color={rgb, 255:red, 74; green, 144; blue, 226 }  ,draw opacity=1 ][line width=1.5]  (49.35,104.95) .. controls (49.35,99.51) and (53.76,95.1) .. (59.2,95.1) -- (570,95.1) .. controls (575.44,95.1) and (579.85,99.51) .. (579.85,104.95) -- (579.85,134.5) .. controls (579.85,139.94) and (575.44,144.35) .. (570,144.35) -- (59.2,144.35) .. controls (53.76,144.35) and (49.35,139.94) .. (49.35,134.5) -- cycle ;

\draw  [color={rgb, 255:red, 74; green, 144; blue, 226 }  ,draw opacity=1 ][fill={rgb, 255:red, 74; green, 144; blue, 226 }  ,fill opacity=1 ] (505.63,234.69) -- (513.71,227.29) -- (513.84,241.95) -- cycle ;

\draw  [color={rgb, 255:red, 74; green, 144; blue, 226 }  ,draw opacity=1 ][fill={rgb, 255:red, 74; green, 144; blue, 226 }  ,fill opacity=1 ] (300.13,235.59) -- (308.21,228.18) -- (308.34,242.85) -- cycle ;

\draw  [color={rgb, 255:red, 74; green, 144; blue, 226 }  ,draw opacity=1 ][fill={rgb, 255:red, 74; green, 144; blue, 226 }  ,fill opacity=1 ] (110.64,235.95) -- (118.71,228.55) -- (118.84,243.21) -- cycle ;

\draw  [color={rgb, 255:red, 74; green, 144; blue, 226 }  ,draw opacity=1 ][fill={rgb, 255:red, 74; green, 144; blue, 226 }  ,fill opacity=1 ] (453.68,185.73) -- (445.61,193.14) -- (445.47,178.48) -- cycle ;

\draw  [color={rgb, 255:red, 74; green, 144; blue, 226 }  ,draw opacity=1 ][fill={rgb, 255:red, 74; green, 144; blue, 226 }  ,fill opacity=1 ] (319.68,186.49) -- (311.61,193.9) -- (311.47,179.24) -- cycle ;

\draw  [color={rgb, 255:red, 74; green, 144; blue, 226 }  ,draw opacity=1 ][fill={rgb, 255:red, 74; green, 144; blue, 226 }  ,fill opacity=1 ] (126.18,186.86) -- (118.11,194.27) -- (117.97,179.61) -- cycle ;

\draw  [color={rgb, 255:red, 74; green, 144; blue, 226 }  ,draw opacity=1 ][line width=1.5]  (65.85,195.95) .. controls (65.85,190.51) and (70.26,186.1) .. (75.7,186.1) -- (586.5,186.1) .. controls (591.94,186.1) and (596.35,190.51) .. (596.35,195.95) -- (596.35,225.5) .. controls (596.35,230.94) and (591.94,235.35) .. (586.5,235.35) -- (75.7,235.35) .. controls (70.26,235.35) and (65.85,230.94) .. (65.85,225.5) -- cycle ;

\draw  [color={rgb, 255:red, 74; green, 144; blue, 226 }  ,draw opacity=1 ][fill={rgb, 255:red, 74; green, 144; blue, 226 }  ,fill opacity=1 ] (488.96,143.72) -- (497.03,136.32) -- (497.17,150.98) -- cycle ;

\draw  [color={rgb, 255:red, 74; green, 144; blue, 226 }  ,draw opacity=1 ][fill={rgb, 255:red, 74; green, 144; blue, 226 }  ,fill opacity=1 ] (283.46,144.62) -- (291.54,137.21) -- (291.67,151.88) -- cycle ;

\draw  [color={rgb, 255:red, 74; green, 144; blue, 226 }  ,draw opacity=1 ][fill={rgb, 255:red, 74; green, 144; blue, 226 }  ,fill opacity=1 ] (93.96,144.98) -- (102.04,137.58) -- (102.17,152.24) -- cycle ;

\draw  [color={rgb, 255:red, 74; green, 144; blue, 226 }  ,draw opacity=1 ][fill={rgb, 255:red, 74; green, 144; blue, 226 }  ,fill opacity=1 ] (437.01,94.76) -- (428.94,102.18) -- (428.79,87.51) -- cycle ;

\draw  [color={rgb, 255:red, 74; green, 144; blue, 226 }  ,draw opacity=1 ][fill={rgb, 255:red, 74; green, 144; blue, 226 }  ,fill opacity=1 ] (303.01,95.52) -- (294.94,102.93) -- (294.79,88.27) -- cycle ;

\draw  [color={rgb, 255:red, 74; green, 144; blue, 226 }  ,draw opacity=1 ][fill={rgb, 255:red, 74; green, 144; blue, 226 }  ,fill opacity=1 ] (109.51,95.89) -- (101.44,103.3) -- (101.29,88.64) -- cycle ;

\draw (110.13,158.99) node [anchor=south east] [inner sep=0.75pt]    {$\mathbb{R}$};

\draw (348.67,111.09) node [anchor=south] [inner sep=0.75pt]    {$\lambda _{j}$};

\draw (100.12,92.15) node [anchor=south west] [inner sep=0.75pt]    {$\Gamma _{1}$};

\draw (176.62,238.95) node [anchor=north west][inner sep=0.75pt]    {$\Gamma _{2}$};

\draw (344.28,248.13) node [anchor=north west][inner sep=0.75pt]  [rotate=-179.48]  {$+$};

\draw (158.3,251.32) node [anchor=north west][inner sep=0.75pt]  [rotate=-179.48]  {$+$};

\draw (405.19,183.08) node [anchor=north west][inner sep=0.75pt]  [rotate=-179.48]  {$+$};

\draw (168.68,183.22) node [anchor=north west][inner sep=0.75pt]  [rotate=-179.48]  {$+$};

\draw (425.94,160.89) node [anchor=north west][inner sep=0.75pt]  [rotate=-179.48]  {$+$};

\draw (179.94,156.62) node [anchor=north west][inner sep=0.75pt]  [rotate=-179.48]  {$+$};

\draw (447.84,88.68) node [anchor=north west][inner sep=0.75pt]  [rotate=-179.48]  {$+$};

\draw (148.37,93.4) node [anchor=north west][inner sep=0.75pt]  [rotate=-179.48]  {$+$};

\draw (424.79,143.89) node [anchor=north west][inner sep=0.75pt]  [rotate=-179.48]  {$-$};

\draw (447.02,108.69) node [anchor=north west][inner sep=0.75pt]  [rotate=-179.48]  {$-$};

\draw (179.81,142.12) node [anchor=north west][inner sep=0.75pt]  [rotate=-179.48]  {$-$};

\draw (149.5,108.39) node [anchor=north west][inner sep=0.75pt]  [rotate=-179.48]  {$-$};

\draw (405.32,197.58) node [anchor=north west][inner sep=0.75pt]  [rotate=-179.48]  {$-$};

\draw (167.82,202.14) node [anchor=north west][inner sep=0.75pt]  [rotate=-179.48]  {$-$};

\draw (157.66,235.32) node [anchor=north west][inner sep=0.75pt]  [rotate=-179.48]  {$-$};

\draw (341.62,230.15) node [anchor=north west][inner sep=0.75pt]  [rotate=-179.48]  {$-$};

\end{tikzpicture}

		\caption{Example of contour $\Gamma_1,\Gamma_2$}
		\label{fig:contour}
	\end{figure}

We therefore have, for every positive integer $N$, our $N$-soliton scattering data 
\begin{eqnarray}
    \mathcal{D}_{N} = \left\{r \equiv 0,
\left\{  
\lambda_{j}, c_{j}
\right\}_{j=1}^{N}
    \right\},
\end{eqnarray}
where $\lambda_{j}$ is defined in \eqref{eq:lamsamp} and the normalzation constants $c_{j}$ are defined in \eqref{eq:normsamp}.
For each positive integer $N$, these scattering data correspond to an $N$-soliton solution to the NLS equation, which we will denote by 
\begin{eqnarray}
\psi_{N}(x,t) = \psi(x,t;\mathcal{D}_{N}).
\end{eqnarray}
In what follows, unless otherwise specified, we will assume that the scattering data under consideration is a sequence of $N$-soliton condensate scattering data, with $N$ large and increasing towards $\infty$.  We will refer to this collection of assumptions by saying, ``assuming a sequence of $N$-soliton condensate scattering data'', or ``under the $N$-soliton condensate scattering data assumption''.

\subsection{Riemann-Hilbert problem for \texorpdfstring{$\psi_{N}(x,t)$}{psi}}
Given the scattering data $\mathcal{D}_{N}$, the Riemann-Hilbert problem \ref{rhp:reflection_RHP} simplifies significantly, since $r(z) \equiv 0$ for all real $z$.  Indeed, since the jump relation across the real axis \eqref{eq:MJump} becomes the trivial relation $M_{+}(z) = M_{-}(z)$ for all real $z$,  Morera's Theorem tells us that $M(z)$ is then analytic across the real axis, and hence $M(z)$ is meromorphic in $\mathbb{C}$.  We will make one standard transformation, from $M(z)$ to a new matrix $A(z)$, following (for example) \cite{Girotti2021}.  We use the contours $\Gamma_{1}$ and $\Gamma_{2}$ which are provided to us under the $N$-soliton condensate scattering assumption, $\Gamma_{1}$ encircling the interval $\eta_{1}$ and $\Gamma_{2}$ its Schwarz reflection.  Each of these contours is taken to be oriented clockwise, as shown in Fig. \ref{fig:contour}.  We then define $\rhp(z)$ as follows:
	\begin{equation}
		\rhp(z) = \begin{cases}
			M(z)\begin{pmatrix}
				1& 0 \\
				- \sum_{j=1}^N \frac{c_j}{z-\lambda_j}e^{2i\theta(\lambda_j)} & 1 
			\end{pmatrix}, & z \textit{ inside } \Gamma_1 \\ \\[2pt]
			M(z)\begin{pmatrix}
			1 & 	\sum_{j=1}^N \frac{\wo c_j}{z-\wo \lambda_j}e^{-2i\theta(\wo{\lambda}_j)} \\
		   0& 1 
		\end{pmatrix}, & z \textit{ inside } \Gamma_2 \\
		\\[2pt]
			M(z), & \textit{otherwise}.
		\end{cases}
	\end{equation}
Now $M(z)$ has simple poles at the eigenvalues $\lambda_{j}$ and $\overline{\lambda_{j}}$, but the definition above is chosen so that $\rhp(z)$ is analytic inside the contours $\Gamma_{1}$ and $\Gamma_{2}$.  Indeed, the reader may check from the residue conditions \eqref{eq:rescondCP} and \eqref{eq:rescondCM} that (for example) the pole of $M_{11}(z)$ at $\lambda_{k}$ is canceled since 
\begin{eqnarray}
    A_{N;11}(z) = M_{11}(z) - \frac{c_{k} M_{12}(z)e^{2i\theta(\lambda_k)}}{z - \lambda_{k}} + \left<\mbox{analytic at $\lambda_{k}$} \right>,
\end{eqnarray}
and the first two terms have the same exact residue at $\lambda_{k}$.  Similar arguments show that the matrix $\rhp(z)$ is analytic inside $\Gamma_{1}$ and inside $\Gamma_{2}$, and it is also obviously analytic for $z$ outside the two contours.  Finally, $\rhp(z)$ clearly has boundary values as $z$ approaches the contours $\Gamma_{1}$ and $\Gamma_{2}$ (with different boundary values as $z$ approaches each contour from inside or outside).

\textbf{Notation:  boundary values for $z$ on a contour}:  Following standard convention, for $z$ in the contour $\Gamma_{1}$ (or $\Gamma_{2}$), we will let $A_{N,+}(z)$ denote the boundary value of $\rhp(z)$ obtained by letting $z'$ approach $z$ from outside $\Gamma_{1}$ (or outside $\Gamma_{2}$):
\begin{eqnarray}
    A_{N,+}(z) = \lim_{\substack{z' \to z \\ z' \in \mbox{ \small exterior of }  \Gamma_{j}}} \rhp(z'), \ \ z \in \Gamma_{j},
\end{eqnarray}
and we will let $A_{N,-}(z)$ denote the boundary value from inside $\Gamma_{1}$ (or $\Gamma_{2}$).  (Note that this is consistent with the usual convention that the $+$ side of a contour is on the left of the contour as one traverses it according to its orientation).

The new matrix-valued function $\rhp(z)$ satisfies a Riemann-Hilbert problem, which we will refer to as the \textit{$N$-soliton condensate Riemann-Hilbert problem}.  

For this Riemann-Hilbert problem, the poles are accumulating on the segments $\eta_1,\eta_2$ which we parametrize for future convenience:
\begin{equation}
\label{eq:sym_contour}
\eta_1 = \left\{z\in\C\,\Big \vert \, z = a-2at+ib,\; t\in(0,1) \right\}\,,\quad \eta_2 = \left\{z\in\C\,\Big \vert \, z = -a +2ta - ib,\; t\in(0,1) \right\}\,.
\end{equation}
In particular, $\eta_1$ is the horizontal segment from $A$ to $-\wo A$, and $\eta_2$ is the one from $-A$ to $\wo A$. 

  Under the $N$-soliton scattering assumption, we have the following Riemann-Hilbert problem.  
	\begin{RHP}[$N$-soliton Condensate Riemann-Hilbert Problem]
 \label{rhp:A}Find a $2 \times 2 $ matrix function $A_N = A_N(z) = A_N(z; x, t)$ such that
	
		\begin{itemize}
			\item $A_N$ is analytic on $\mathbb{C} \setminus \{ \Gamma_1 \cup \Gamma_2\}$
			\item $A_{N,+}(z) = A_{N,-}(z)J_{A}(z)$ where the jump matrix $J_{A_N}(z)$ is given by
   \begin{equation}
   \label{eq:AJumps}
       J_{A_N}(z) =
           \begin{cases}
                 \begin{pmatrix}
					1 & 0 \\
					\sum_{j=1}^N \frac{c_j}{z-\lambda_j}e^{2i\theta(\lambda_j)} & 1
				\end{pmatrix}, \quad & z\in \Gamma_1 \\[4pt] \\
                \begin{pmatrix}
					1 & -\sum_{j=1}^N \frac{\wo c_j}{z-\wo \lambda_j}e^{-2i\theta(\wo{\lambda}_j)}\\
					 0& 1
				\end{pmatrix}, \quad & z\in \Gamma_2
           \end{cases}
   \end{equation}
   \item 
$\displaystyle 
A_N(z) = I + \frac{1}{2 i z} \pmtwo{-\int_{x}^{\infty} |\psi_{N}(s,t)|^{2}\di s}{\psi_{N}(x,t)}{\overline{\psi_{N}(x,t)}}{\int_{x}^{\infty} |\psi_{N}(s,t)|^{2}\di s}
+ \mathcal{O} \left(\frac{1}{z^{2}} \right), \ \mbox{ as } z \to \infty$.

	\end{itemize}
	\end{RHP}
Note that the above $z \to \infty$ asymptotic condition describes exactly how to extract the $N$-soliton condensate solution $\psi_{N}(x,t)$ from the solution $A_N$ of the Riemann-Hilbert problem.  Usually the asymptotic normalization condition is described in the weak form $A_N = I + \mathcal{O} \left( \frac{1}{z} \right)$, but in this case, since the contours $\Gamma_{1}$ and $\Gamma_{2}$ are bounded, $A$ possesses a complete asymptotic expansion in powers of $\frac{1}{z}$.

\subsection{Riemann-Hilbert problem for the soliton gas}

The $N$-soliton condensate Riemann-Hilbert problem behaves singularly as the number of eigenvalues grows to infinity, because the sums appearing in the jump relationships \eqref{eq:AJumps} do not converge as $N \to \infty$. However, their asymptotic behavior can be determined:
\begin{eqnarray}\label{eq:eqerror}
&&
\sum_{j=1}^{N} \frac{c_{j}e^{ 2 i \theta(\lambda_{j})}
}{z-\lambda_{j}} = - N \int_{\eta_{1}} \frac{h(\lambda) \rho(\lambda) e^{ 2 i \theta(\lambda)} d\lambda}{z- \lambda} + \mathcal{O}\left( \frac{1}{N} \right) \ , \\
&&
\sum_{j=1}^N \frac{\wo c_j}{z-\wo \lambda_j}e^{-2i\theta(\wo{\lambda}_j)} =  N \int_{\eta_{2}} \frac{{h(\lambda)} \rho(\lambda) e^{ - 2 i \theta(\lambda)}\di s}{z - \lambda} + \mathcal{O}\left( \frac{1}{N} \right)\,,
\end{eqnarray}
where we recall that the contour $\eta_{1}$ is oriented from right to left, while $\eta_{2}$ is oriented from left to right, and in both cases the error term $\mathcal{O} \left( \frac{1}{N} \right)$ is uniform for all $z$ in the respective contour $\Gamma_{1}$ or $\Gamma_{2}$.  Indeed, under the $N$-soliton condensate assumption 3, a complete asymptotic expansion for this sum can be calculated, because the eigenvalues $\{\lambda_{j}\}_{j=1}^{N}$ have been selected to arrive at a uniform mid-point approximation, which is known to converge with an error $\cO(N^{-2})$.  The next term in the expansion can be computed explicitly, we perform such computation in Appendix \ref{appendix:A}.

If we replace the finite sum by its leading order asymptotic behavior, we arrive at a Riemann-Hilbert problem which still contains the large parameter $N$.  It is this Riemann-Hilbert problem which we study as $N \to \infty$.  The associated solution to the NLS equation, $\psi_{SG}=\psi_{SG}(x,t) = \psi_{SG}(x,t;N)$, is the soliton gas (or soliton condensate) solution.  

\begin{RHP}[Soliton gas condensate Riemann-Hilbert problem] 
 \label{rhp:Atil}Find a $2 \times 2$ matrix function $\tilde{A} = \tilde{A}(z) = \tilde{A}(z; x, t, N)$ such that
	
		\begin{itemize}
			\item $\tilde{A}$ is analytic on $\mathbb{C} \setminus \{ \Gamma_1 \cup \Gamma_2\}$
			\item $\tilde{A}_{+}(z) = \tilde{A}_{-}(z)J_{\tilde{A}}(z)$ where the jump matrix $J_{\tilde{A}}(z)$ is given by
   \begin{equation}
   \label{eq:AtJumps}
       J_{\tilde{A}}(z) =
           \begin{cases}
                \pmtwo{1}{0}
                {N \int_{\eta_{1}} \frac{h(\lambda) \rho(\lambda) e^{ 2 i \theta(\lambda)} d\lambda}{\lambda-z} }{1}, 
                \quad & z\in \Gamma_1 \\[4pt] \\
                \pmtwo{1}{N \int_{\eta_{2}} \frac{{h(\lambda)} \rho(\lambda) e^{ - 2 i \theta(\lambda)}\di s}{\lambda-z}}{0}{1}, \quad & z\in \Gamma_2
           \end{cases}
   \end{equation}
   \item 
$\displaystyle 
\tilde{A}(z) = I + \frac{1}{2 i z} \pmtwo{-\int_{x}^{\infty} |\psi_{SG}(s,t;N)|^{2}\di s}{\psi_{SG}(x,t;N)}{\overline{\psi_{SG}(x,t;N)}}{\int_{x}^{\infty} |\psi_{SG}(s,t;N)|^{2}\di s}
+ \mathcal{O} \left(\frac{1}{z^{2}} \right), \ \mbox{ as } z \to \infty$.

	\end{itemize}
	\end{RHP}

This Riemann-Hilbert problem arises formally, by replacing the jump matrices \eqref{eq:AJumps} with \eqref{eq:AtJumps}.  Since it is not related to Riemann-Hilbert problem \ref{rhp:A} by explicit transformations, the existence and uniqueness of a solution must be established independently.  Note that this statement extends to the soliton gas condensate solution $\psi_{SG}(x,t;N)$.  Uniqueness for the Riemann-Hilbert problem \ref{rhp:Atil} (regardless of whether or not the solution is known to exist) is completely straightforward since $\mbox{det}J_{\tilde{A}} \equiv 1$.  See, for example, the first paragraph of the proof of Theorem 3.1 on P. 1503 of \cite{DKMVZ1}.  Regarding existence, the crucial fact is that the jump matrices obey the symmetry 
\begin{eqnarray}J_{\tilde{A}}(z) = \pmtwo{0}{1}{1}{0} 
\overline{J_{\tilde{A}}(\overline{z})} \pmtwo{0}{1}{1}{0}.
\end{eqnarray}
This permits the direct application of an existence theory via the connection between Riemann-Hilbert problems and Fredholm singular integral operators (see Theorems 9.1, 9.2, and 9.3 in \cite{Zhou89}, or the proof of Theorem 5.3 in \cite{DKMVZ3}). Regarding the asymptotic behaviour of the solution, using a standard dressing argument, see for example \cite{Bilman2020Duke,Ken_NLS}, one can verify that the $z^{-1}$ coefficient in the large $z$ expansion for $\wt A(z)$ has the given form. Thus we state without further discussion the following result:
\begin{theorem}
    Under the N -soliton condensate scattering data assumption, the Riemann-Hilbert problem \ref{rhp:Atil} possess a unique solution, and determines $\psi_{SG}(x,t;N)$, a solution of the NLS equation.
\end{theorem}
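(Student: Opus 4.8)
The statement has three components — uniqueness of the solution of RHP~\ref{rhp:Atil}, its existence, and the fact that the reconstructed potential solves \eqref{eq:NLS} — and I would establish them in that order, following the standard template of \cite{Zhou89,DKMVZ1,DKMVZ3}. \emph{Uniqueness} is essentially free: if $\tilde A^{(1)},\tilde A^{(2)}$ both solve RHP~\ref{rhp:Atil}, then since $\det J_{\tilde A}\equiv 1$ each $\det\tilde A^{(k)}$ has no jump across $\Gamma:=\Gamma_1\cup\Gamma_2$, extends to an entire function by Morera, equals $1$ at $\infty$, hence is $\equiv 1$; in particular $\tilde A^{(k)}$ is invertible off $\Gamma$, and $R:=\tilde A^{(1)}(\tilde A^{(2)})^{-1}$ satisfies $R_+ = \tilde A^{(1)}_- J_{\tilde A}(\tilde A^{(2)}_- J_{\tilde A})^{-1} = R_-$ on $\Gamma$, so $R$ is entire with $R = I + O(1/z)$, whence $R\equiv I$. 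This is the argument in the first paragraph of the proof of Theorem~3.1 of \cite{DKMVZ1}.

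For \emph{existence} I would transcribe RHP~\ref{rhp:Atil} into a singular integral equation on $L^2(\Gamma)$: with $C_\Gamma$ the Cauchy transform and $C_\pm$ its boundary values, solvability is equivalent to invertibility on $L^2(\Gamma)$ of $I - C_{J_{\tilde A}}$, where $C_{J_{\tilde A}}f := C_-\!\big(f(J_{\tilde A}-I)\big)$. Because $\Gamma$ is a bounded, smooth, Schwarz-symmetric curve and the entries of $J_{\tilde A}-I$ are Cauchy transforms of analytic densities supported on $\eta_1,\eta_2$ (which $\Gamma$ encircles but does not meet), $J_{\tilde A}-I$ is real-analytic and bounded on $\Gamma$ for each fixed $(x,t,N)$, so $C_{J_{\tilde A}}$ is Hilbert--Schmidt, $I - C_{J_{\tilde A}}$ is Fredholm of index $0$, and it remains only to exclude nontrivial solutions of the homogeneous problem (same jumps, $F = O(1/z)$ at $\infty$) — the vanishing lemma. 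Its hypothesis is supplied precisely by the Schwarz symmetry recorded in the excerpt,
\begin{eqnarray}
J_{\tilde A}(z) = \pmtwo{0}{1}{1}{0}\,\overline{J_{\tilde A}(\overline z)}\,\pmtwo{0}{1}{1}{0},
\end{eqnarray}
which follows from $h(\overline z)=\overline{h(z)}$, $\rho(\overline z)=\overline{\rho(z)}$, the orientations of $\eta_1,\eta_2$ and of $\Gamma_1,\Gamma_2$, and $\theta(\overline z;x,t)=\overline{\theta(z;x,t)}$ for real $x,t$. Given this symmetry one pairs a homogeneous solution $F$ with its Schwarz conjugate $\pmtwo{0}{1}{1}{0}\,\overline{F(\overline\cdot)}\,\pmtwo{0}{1}{1}{0}$ into an entire matrix that vanishes at $\infty$, hence vanishes identically, forcing $F\equiv 0$; this is Theorems~9.1--9.3 of \cite{Zhou89}, equivalently the argument proving Theorem~5.3 of \cite{DKMVZ3}. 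Thus $I-C_{J_{\tilde A}}$ is invertible, $\tilde A$ exists, and since $\Gamma$ is bounded $\tilde A$ has a genuine convergent $1/z$-expansion at $\infty$ whose $1/z$-coefficient unambiguously defines $\psi_{SG}(x,t;N)$.

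To see that $\psi_{SG}(x,t;N)$ solves \eqref{eq:NLS}, I would first deform $\Gamma_1,\Gamma_2$ inward onto $\eta_1,\eta_2$: since the off-diagonal entries of $J_{\tilde A}$ are Cauchy transforms, analytic in the annular region between $\Gamma_j$ and $\eta_j$, setting $\hat A := \tilde A$ outside $\Gamma_1\cup\Gamma_2$ and $\hat A := \tilde A\,\pmtwo{1}{0}{\beta(z)}{1}$ between $\Gamma_1$ and $\eta_1$ (with $\beta$ the $(2,1)$-entry of $J_{\tilde A}|_{\Gamma_1}$, and the conjugate move near $\Gamma_2$) removes the jump on $\Gamma_1\cup\Gamma_2$ and leaves, by Plemelj, a jump only on $\eta_1\cup\eta_2$ of the form $e^{-i\theta(z;x,t)\sigma_3}V_0(z)\,e^{i\theta(z;x,t)\sigma_3}$ with $V_0$ independent of $(x,t)$. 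Since $\hat A$ and $\tilde A$ coincide for large $z$ they share the same $1/z$-expansion and the same $\psi_{SG}$. The standard dressing argument then applies verbatim: differentiating under the Cauchy integral shows $\hat A$ is $C^\infty$ in $(x,t)$, the matrices $\hat A_x\hat A^{-1}-iz\,\hat A\sigma_3\hat A^{-1}$ and $\hat A_t\hat A^{-1}-2iz^2\,\hat A\sigma_3\hat A^{-1}$ have no jump across $\eta_1\cup\eta_2$, are therefore entire, and by their behavior at $\infty$ are polynomials in $z$ of degree $1$ and $2$; matching coefficients against the $1/z$-expansion of $\hat A$ identifies them with the Zakharov--Shabat operators in \eqref{eq:ZS1} evaluated at $\psi = \psi_{SG}$, so the compatibility of that Lax pair is exactly \eqref{eq:NLS} for $\psi_{SG}(x,t;N)$.

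\textbf{Main obstacle.} Uniqueness and the Lax-pair computation are routine; the one genuinely substantive point is the vanishing lemma underpinning existence. Unlike the scattering RHP on $\mathbb{R}$, where positivity of $1+|r|^2$ on the jump does the work, here the off-diagonal jump entries $N\!\int_{\eta_1}h\rho\,e^{2i\theta(\lambda)}/(\lambda-z)\,d\lambda$ are sign-indefinite and grow with $N$, so one must be careful that the argument rests solely on the algebraic Schwarz symmetry of $J_{\tilde A}$ and the triangular structure of the jumps — which is exactly what makes the cited general theorems of \cite{Zhou89,DKMVZ3} applicable — rather than on any positivity; verifying that their hypotheses are literally met in this $N$-dependent, purely off-diagonal setting is the crux.
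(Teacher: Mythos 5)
Your overall route---uniqueness from $\det J_{\tilde A}\equiv 1$ plus Liouville, existence from the singular-integral/Fredholm formulation together with a vanishing lemma driven by Schwarz symmetry of the jump, and a dressing argument identifying $\psi_{SG}$ as an NLS solution---is the same route the paper takes (the paper only records the citations to \cite{Zhou89,DKMVZ1,DKMVZ3} and states the theorem without further discussion), and your uniqueness and Lax-pair paragraphs are fine. The gap sits exactly where you flag the crux, and as sketched your vanishing lemma would not go through. First, the symmetry you quote, $J_{\tilde A}(z)=\pmtwo{0}{1}{1}{0}\overline{J_{\tilde A}(\overline z)}\pmtwo{0}{1}{1}{0}$, does \emph{not} follow from $h(\overline z)=\overline{h(z)}$, $\rho(\overline z)=\overline{\rho(z)}$, $\theta(\overline z)=\overline{\theta(z)}$ and the stated orientations: conjugating the $(1,2)$ entry of the jump on $\Gamma_2$ and substituting $\lambda\mapsto\overline\lambda$ maps $\eta_2$ onto $\eta_1$ with reversed orientation, so $\overline{[J_{\tilde A}(\overline z)]_{12}}=-[J_{\tilde A}(z)]_{21}$; the identity holds with $\sigma_2=\pmtwo{0}{-i}{i}{0}$ in place of $\pmtwo{0}{1}{1}{0}$ (this is the focusing sign already visible in \eqref{eq:rescondCM}), equivalently, since the jumps are unipotent triangular, $J_{\tilde A}(\overline z)^{\dagger}=J_{\tilde A}(z)^{-1}$. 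Second, ``pairing $F$ with its Schwarz conjugate into an entire matrix that vanishes at $\infty$'' is not the right mechanism: $G(z):=\sigma_2\overline{F(\overline z)}\sigma_2$ merely solves the \emph{same} homogeneous problem as $F$ (so it shows invariance of the null space, not its triviality), and no product of $F$ with $G$ or $G^{-1}$ is entire ($G$ need not even be invertible, since $\det F$ is entire and $O(z^{-2})$, hence $\equiv 0$, for a vanishing solution).

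The step that actually closes the argument---and which the corrected symmetry buys you---is the following. For a vanishing solution $F$ set $H(z)=F(z)\,F(\overline z)^{\dagger}$. For $z\in\Gamma_1\cup\Gamma_2$ approached from outside, $\overline z$ is approached from outside the conjugate contour, so $H_+=F_-\,J_{\tilde A}(z)J_{\tilde A}(\overline z)^{\dagger}F_-(\overline z)^{\dagger}=H_-$, because $J_{\tilde A}(z)J_{\tilde A}(\overline z)^{\dagger}=I$ by the identity above and the nilpotency of the off-diagonal part. Hence $H$ is entire and $O(z^{-2})$, so $H\equiv 0$; restricting to $z\in\mathbb{R}$ (which lies in the unbounded component) gives $F(x)F(x)^{\dagger}=0$, so $F\equiv 0$ there, and then $F\equiv 0$ inside $\Gamma_1,\Gamma_2$ since its inner boundary values $F_-=F_+J_{\tilde A}^{-1}$ vanish. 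Note this uses only the algebraic symmetry---no positivity of $h\rho$ and no control of the $N$-dependence---which disposes of the worry in your last paragraph. One smaller repair: the operator $f\mapsto C_-\bigl(f(J_{\tilde A}-I)\bigr)$ is bounded but not Hilbert--Schmidt on $L^2(\Gamma)$; Fredholmness with index zero should instead be taken from Theorems 9.1--9.2 of \cite{Zhou89} (using that $J_{\tilde A}-I$ extends analytically to a neighborhood of $\Gamma$ and $\det J_{\tilde A}\equiv1$). With these corrections your existence argument coincides with the one the paper appeals to.
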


\subsection{Statement of the results}

In Sections \ref{sec:dressing} - \ref{sec:local} we determine the asymptotic behavior of the solution $\tilde{A}$ of the Riemann-Hilbert problem \ref{rhp:Atil}.  The main application of this work is to determine the asymptotic behavior of the soliton gas condensate $\psi_{SG}(x,t;N)$, which is described in the following theorem.  

\begin{theorem}
\label{thm:main_sym}
    Under the $N$-soliton condensate scattering data assumption, for all $(x,t)$ in a compact set $\fK$, there is $N_{0}$ so that for all $N > N_{0}$, $\psi_{SG}(x,t;N)$ satisfies the asymptotic description \eqref{eq:solution_limit}.  Specifically, for $\psi_{SG} $, we have 

\begin{equation}
\label{eq:solution_main}
\begin{split}
            &\psi_{SG}(x,t;N)  =\\ &  -2 i \frac{\Re(A)\Im(A)}{|A|} \sd\left( -2|A|x + \frac{2K(\cos(\theta))}{\pi}\ln(N) -  \frac{2|A|}{\pi}\Re\left(\int_{\eta_1} \frac{\log(2\pi h(s)\rho(s))}{R_+(s)}\di  s\right); \cos(\theta)\right) e^{it(A^2+\wo A^2) + i \phi_0} \\
            \empty & +\mathcal{O} \left(\frac{1}{\log{N}}\right),
\end{split}
\end{equation}
where the error term $\mathcal{O}\left( \frac{1}{\log{N} } \right)$ is uniform for all $(x,t)$ in the compact set $\fK$.  Here $\sd$ is the Jacobi elliptic function $\sd$ \cite[Chap. 22]{dlmf},  
and $R(z)$ and $\phi_0$ are given by

\begin{align}
\label{eq:R_def}
    &R(z) = (z-A)^{\frac{1}{2}}(z+A)^{\frac{1}{2}}(z-\wo A)^{\frac{1}{2}}(z+\wo A)^{\frac{1}{2}}\,,\\
    \label{eq:phi_0_def}
    &\phi_0 = \ln(N) \frac{K(\cos(\theta))}{K(\sin(\theta))} + \Re\left( \int_{\eta_1} \frac{s\ln(2\pi h(s)\rho(s))}{R_+(s)}\di s\right)\,, \quad \theta = \arg(A)\
\end{align}
where the function $K(k)$ is the complete elliptic integral of the fist kind \cite[Chap. 19]{dlmf}, the branch-cuts are the canonical ones, and the integration is performed on the straight line connecting the endpoints.
\end{theorem}

\begin{remark}
It is interesting to observe that the leading order asymptotic behavior of the modulus of the soliton gas condensate solution $\psi_{SG}(x,t;N)$ \eqref{eq:solution_main} does not depend on time.  The explanation for this fact is that our assumption on the scattering data, the poles are accumulating on symmetric contours. Essentially, the soliton gas condensate is in a sort of equilibrium, and can be interpreted as an elliptic wave with zero velocity.  This could also be interpreted as another form of soliton-shielding, similar to  \cite{Orsatti23,bertola2024}. In particular, we want to emphasize that the solution \eqref{eq:solution_main} and the solution \cite[eq. (4.44)]{bertola2024} are similar, indeed they both arise as solutions of similar  RHPs, the two main differences are that in our case the contours where the poles are accumulating are horizontal, and the norming constants $c_j$ are bounded away from zero, while in \cite{bertola2024} the contours are vertical and the constants are uniformly $\mathcal{O}(N^{-1})$.

We notice that, in view of the Galilean invariance of the NLS equation, for $v\in \R$,  $\psi(x-vt,t)e^{ivx-i\frac{v^2t}{2}}$ solves the NLS equations \eqref{eq:NLS} whenever $\psi(x,t)$ does. Furthermore, if the scattering maps gives $\psi(x,0)\to\{r(z),\{\lambda_j,c_j\}_{j=1}^N\}$ then $\psi(x,0)e^{ivx}\to\{r\left(z+\frac{v}{2}\right),\{\lambda_j-\frac{v}{2},c_j\}_{j=1}^N\}$; therefore, in the setting where the two line segments $\eta_{1} $ and $\eta_{2}$ are shifted so that they are not symmetric with respect to the origin, one can always reduce to the symmetric situation up to a simple scaling. Finally, it is worth noticing that the solution depends just on the product of $h(z),\rho(z)$ and not on each function separately.  Fig. \ref{fig:solution} shows plots of the leading order asymptotic of the solution of the NLS equation for different values of $N$.
\end{remark}

\begin{Proposition}
\label{prop:last_detail}
    Let $A_N(z),\wt A(z)$ be the solutions of RHP \ref{rhp:A} and RHP \ref{rhp:Atil} respectively.
Under the $N$-soliton condensate scattering data assumption, there is $N_{0}$ so that for all  $N > N_{0}$, we have  
    \begin{equation}
    \label{eq:relation_A_A_tildenew}
        A_N(z) = \left(I + \mathcal{O} \left( \frac{1}{N \left( 1 + |z| \right) } \right) \right) \wt A(z)\,.
    \end{equation}
    where the error term, $\mathcal{O} \left( \frac{1}{N \left( 1 + |z| \right) } \right)$ is uniform in the entire complex plane, and $(x,t)$ in a compact set.
\end{Proposition}
The proof of this proposition is presented in Appendix  \ref{appendix:proof2}, and relies on the complete asymptotic description of the solution $\tilde{A}$ to the Riemann-Hilbert problem \ref{rhp:Atil}, which as mentioned above is developed in Sections \ref{sec:dressing} - \ref{sec:local}.

It is an immediate consequence of this uniform approximation that the $N$-soliton solution is extremely well approximated by the soliton gas condensate solution.

\begin{Corollary}
\label{cor:NSol}
    Under the $N$-soliton condensate scattering data assumption, for all $(x,t)$ in a compact set, there is $N_{0}$ and a constant $c_{0}$ so that for all $N > N_{0}$, 
    we have
    \begin{eqnarray}
      \left|  \psi_{N}(x,t) - \psi_{SG}(x,t;N)  \right| \le \frac{c_{0}}{N}.
      \end{eqnarray}   
\end{Corollary}

In the next four sections, we develop the asymptotic analysis of the solution of the Riemann-Hilbert problem \ref{rhp:Atil}, to establish, in Section \ref{sec:solution},  the asymptotic behavior of the soliton gas condensate solution $\psi_{SG}(x,t;N)$.  In Section \ref{sec:kinetic} we consider a more complex solution which contains one additional soliton, and using our analysis we provide a rigorous validation of the kinetic theory of solitons in this setting.  We prove Proposition \ref{prop:last_detail}, Corollary \ref{cor:NSol} and other technical results in Appendix \ref{appendix:A}.

\begin{figure}[ht]
    \centering
    \includegraphics[width=0.45\linewidth]{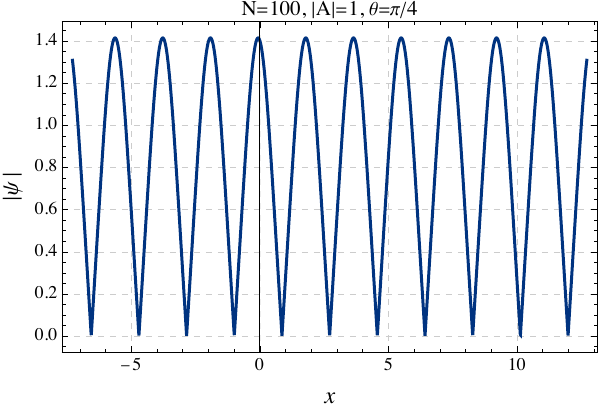}
    \includegraphics[width=0.45\linewidth]{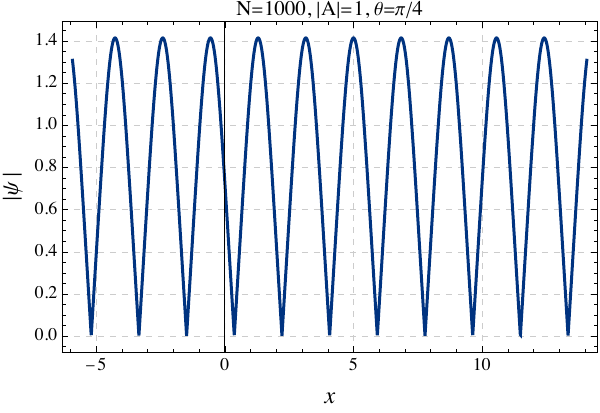}

    \includegraphics[width=0.45\linewidth]{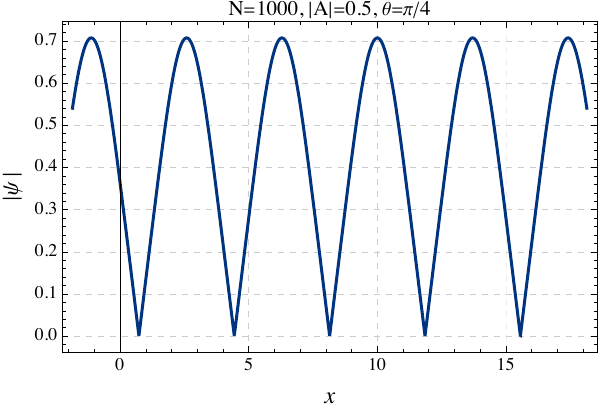}
    \includegraphics[width=0.45\linewidth]{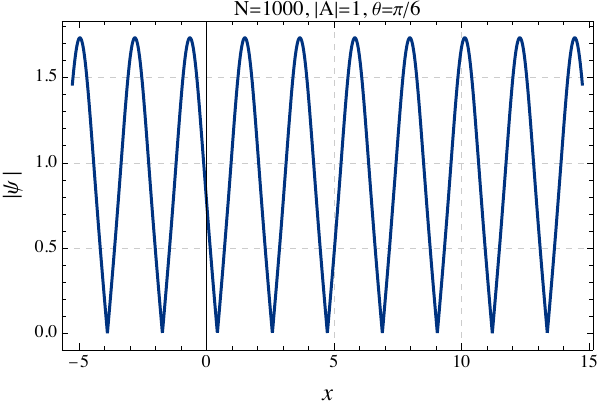}
    
    \caption{Solution to the NLS equation \eqref{eq:NLS} in assumptions \ref{ass:sym}. Here $A,N$ and $\theta$ are specified in the plots}
    \label{fig:solution}
\end{figure}

 	\section{Dressing transformations}
	\label{sec:dressing}
In this section, we make two transformations, starting with  RHP \ref{rhp:Atil}, which yield a new Riemann-Hilbert problem. This new Riemann-Hilbert problem is not yet in the class of small-norm problems, but is a stepping-stone toward that objective.  Recall that we have already turned the residue conditions of $M$ at $\lambda_j$ into suitable jump conditions for the function $\wt A$ across $\Gamma_1 \cup \Gamma_2$ (see RHP \ref{rhp:Atil}). 

	\begin{figure}
		\centering

\tikzset{every picture/.style={line width=0.75pt}}        

\begin{tikzpicture}[x=0.5pt,y=0.5pt,yscale=-1,xscale=1]

\draw [color={rgb, 255:red, 208; green, 2; blue, 27 }  ,draw opacity=1 ][line width=1.5]    (120,124.63) -- (575,125.63) ;

\draw [color={rgb, 255:red, 208; green, 2; blue, 27 }  ,draw opacity=1 ][line width=1.5]    (120.4,215.03) -- (575.4,216.03) ;

\draw    (90.2,165.9) -- (603.4,166.3) ;

\draw  [color={rgb, 255:red, 74; green, 144; blue, 226 }  ,draw opacity=1 ][line width=1.5]  (69.35,110.25) .. controls (69.35,104.81) and (73.76,100.4) .. (79.2,100.4) -- (590,100.4) .. controls (595.44,100.4) and (599.85,104.81) .. (599.85,110.25) -- (599.85,139.8) .. controls (599.85,145.24) and (595.44,149.65) .. (590,149.65) -- (79.2,149.65) .. controls (73.76,149.65) and (69.35,145.24) .. (69.35,139.8) -- cycle ;

\draw  [color={rgb, 255:red, 74; green, 144; blue, 226 }  ,draw opacity=1 ][fill={rgb, 255:red, 74; green, 144; blue, 226 }  ,fill opacity=1 ] (525.63,239.99) -- (533.71,232.59) -- (533.84,247.25) -- cycle ;

\draw  [color={rgb, 255:red, 74; green, 144; blue, 226 }  ,draw opacity=1 ][fill={rgb, 255:red, 74; green, 144; blue, 226 }  ,fill opacity=1 ] (320.13,240.89) -- (328.21,233.48) -- (328.34,248.15) -- cycle ;

\draw  [color={rgb, 255:red, 74; green, 144; blue, 226 }  ,draw opacity=1 ][fill={rgb, 255:red, 74; green, 144; blue, 226 }  ,fill opacity=1 ] (130.64,241.25) -- (138.71,233.84) -- (138.84,248.51) -- cycle ;

\draw  [color={rgb, 255:red, 74; green, 144; blue, 226 }  ,draw opacity=1 ][fill={rgb, 255:red, 74; green, 144; blue, 226 }  ,fill opacity=1 ] (473.68,191.03) -- (465.61,198.44) -- (465.47,183.78) -- cycle ;

\draw  [color={rgb, 255:red, 74; green, 144; blue, 226 }  ,draw opacity=1 ][fill={rgb, 255:red, 74; green, 144; blue, 226 }  ,fill opacity=1 ] (339.68,191.79) -- (331.61,199.2) -- (331.47,184.53) -- cycle ;

\draw  [color={rgb, 255:red, 74; green, 144; blue, 226 }  ,draw opacity=1 ][fill={rgb, 255:red, 74; green, 144; blue, 226 }  ,fill opacity=1 ] (146.18,192.16) -- (138.11,199.57) -- (137.97,184.91) -- cycle ;

\draw  [color={rgb, 255:red, 74; green, 144; blue, 226 }  ,draw opacity=1 ][line width=1.5]  (85.85,201.25) .. controls (85.85,195.81) and (90.26,191.4) .. (95.7,191.4) -- (606.5,191.4) .. controls (611.94,191.4) and (616.35,195.81) .. (616.35,201.25) -- (616.35,230.8) .. controls (616.35,236.24) and (611.94,240.65) .. (606.5,240.65) -- (95.7,240.65) .. controls (90.26,240.65) and (85.85,236.24) .. (85.85,230.8) -- cycle ;

\draw  [color={rgb, 255:red, 74; green, 144; blue, 226 }  ,draw opacity=1 ][fill={rgb, 255:red, 74; green, 144; blue, 226 }  ,fill opacity=1 ] (508.96,149.02) -- (517.03,141.62) -- (517.17,156.28) -- cycle ;

\draw  [color={rgb, 255:red, 74; green, 144; blue, 226 }  ,draw opacity=1 ][fill={rgb, 255:red, 74; green, 144; blue, 226 }  ,fill opacity=1 ] (303.46,149.92) -- (311.54,142.51) -- (311.67,157.18) -- cycle ;

\draw  [color={rgb, 255:red, 74; green, 144; blue, 226 }  ,draw opacity=1 ][fill={rgb, 255:red, 74; green, 144; blue, 226 }  ,fill opacity=1 ] (113.96,150.28) -- (122.04,142.88) -- (122.17,157.54) -- cycle ;

\draw  [color={rgb, 255:red, 74; green, 144; blue, 226 }  ,draw opacity=1 ][fill={rgb, 255:red, 74; green, 144; blue, 226 }  ,fill opacity=1 ] (457.01,100.06) -- (448.94,107.47) -- (448.79,92.81) -- cycle ;

\draw  [color={rgb, 255:red, 74; green, 144; blue, 226 }  ,draw opacity=1 ][fill={rgb, 255:red, 74; green, 144; blue, 226 }  ,fill opacity=1 ] (323.01,100.82) -- (314.94,108.23) -- (314.79,93.57) -- cycle ;

\draw  [color={rgb, 255:red, 74; green, 144; blue, 226 }  ,draw opacity=1 ][fill={rgb, 255:red, 74; green, 144; blue, 226 }  ,fill opacity=1 ] (129.51,101.19) -- (121.44,108.6) -- (121.29,93.94) -- cycle ;

\draw  [color={rgb, 255:red, 208; green, 2; blue, 27 }  ,draw opacity=1 ][fill={rgb, 255:red, 208; green, 2; blue, 27 }  ,fill opacity=1 ] (216.87,124.85) -- (224.79,117.28) -- (225.22,131.94) -- cycle ;

\draw  [color={rgb, 255:red, 208; green, 2; blue, 27 }  ,draw opacity=1 ][fill={rgb, 255:red, 208; green, 2; blue, 27 }  ,fill opacity=1 ] (230.01,215.78) -- (221.77,223.01) -- (221.96,208.35) -- cycle ;

\draw  [color={rgb, 255:red, 208; green, 2; blue, 27 }  ,draw opacity=1 ][fill={rgb, 255:red, 208; green, 2; blue, 27 }  ,fill opacity=1 ] (433.37,125.85) -- (441.29,118.28) -- (441.72,132.94) -- cycle ;

\draw  [color={rgb, 255:red, 208; green, 2; blue, 27 }  ,draw opacity=1 ][fill={rgb, 255:red, 208; green, 2; blue, 27 }  ,fill opacity=1 ] (447.01,215.69) -- (438.95,223.11) -- (438.79,208.44) -- cycle ;

\draw (157.13,162.29) node [anchor=south east] [inner sep=0.75pt]    {$\mathbb{R}$};

\draw (120.12,97.45) node [anchor=south west] [inner sep=0.75pt]    {$\Gamma _{1}$};

\draw (196.62,244.25) node [anchor=north west][inner sep=0.75pt]    {$\Gamma _{2}$};

\draw (400.28,213.43) node [anchor=north west][inner sep=0.75pt]  [rotate=-179.48]  {$+$};

\draw (188.68,215.02) node [anchor=north west][inner sep=0.75pt]  [rotate=-179.48]  {$+$};

\draw (413.94,145.92) node [anchor=north west][inner sep=0.75pt]  [rotate=-179.48]  {$+$};

\draw (250.87,142.2) node [anchor=north west][inner sep=0.75pt]  [rotate=-179.48]  {$+$};

\draw (414.31,124.42) node [anchor=north west][inner sep=0.75pt]  [rotate=-179.48]  {$-$};

\draw (249,124.69) node [anchor=north west][inner sep=0.75pt]  [rotate=-179.48]  {$-$};

\draw (188.82,234.44) node [anchor=north west][inner sep=0.75pt]  [rotate=-179.48]  {$-$};

\draw (399.62,235.45) node [anchor=north west][inner sep=0.75pt]  [rotate=-179.48]  {$-$};

\draw (118,121.23) node [anchor=south east] [inner sep=0.75pt]    {$\eta _{1}$};

\draw (118.4,211.63) node [anchor=south east] [inner sep=0.75pt]    {$\eta _{2}$};
\end{tikzpicture}
		\caption{Contours $\Gamma_1,\Gamma_2,\eta_1,\eta_2$}
		\label{fig:double_contour}
	\end{figure}
Following the procedure highlighted in Proposition 2.3 and Lemma 2.4 of \cite{Girotti2021}, we move the jump condition from $\Gamma_1,\Gamma_2$ to $\eta_1,\eta_2$. To accomplish this, we define the function
	\begin{equation}\label{e:eqfenB}
		B(z) = \begin{cases}
			\wt A(z) \begin{pmatrix}
				1 & 0 \\
				N\int_{\eta_1} \frac{h(\lambda)\rho(\lambda)}{\lambda - z}\red{e^{2i\theta(\lambda)}}\di \lambda & 1
			\end{pmatrix} \,, \quad z \text{ inside } \Gamma_1 \\ \\[2pt]
			\wt A(z)\begin{pmatrix}
				1 & N \int_{\eta_2} \frac{ h(\lambda) \rho(\lambda)}{\lambda-z}\red{e^{-2i\theta( \lambda)}} \di \lambda \\
				0& 1
			\end{pmatrix}, \,\quad z \text{ inside } \Gamma_2\\ \\[2pt]
			\wt A(z),\,\quad \text{otherwise}
		\end{cases}
	\end{equation}
 
	The reader may verify that $B(z)$ has no jumps across $\Gamma_1,\Gamma_2$, but it has jumps across $\eta_1,\eta_2$. Specifically, the matrix $B(z)$ solves the following RHP.
	
	\begin{RHP}
		\label{rhp:almost_constant}
	We look for a square matrix function $B(z)$ such that:
	
	\begin{itemize}
		\item $B(z)$ is analytic in $\mathbb{C} \setminus \left( \eta_1\cup \eta_2\right)$.  It possesses smooth boundary values $B_{+}$ and $B_{-}$ as $z$ approaches either contour from the $+$ or $-$ side.
		\item The boundary values satisfy the jump relation $B(z)_+ = B(z)_-J_B(z)$ across the contour $\eta_1,\eta_2$, where

                \begin{equation}
                    J_B(z) = \begin{cases}
                        \begin{pmatrix}
				1 & 0 \\
				2\pi i N h(z)\rho(z) \red{e^{2i\theta(z)}}& 1
			\end{pmatrix}, \quad & z\in \eta_1 \\[4pt] \\
                \begin{pmatrix}
				1 & 2\pi i N h(z)\rho(z)\red{e^{-2i\theta( z)}} \\
				0& 1
			\end{pmatrix}, \quad & z\in \eta_2
                    \end{cases}
                \end{equation}
	\item $B(z)= I + O(1/z)$ as $z\to\infty$.
	\end{itemize}
\end{RHP}
The next classical step in the analysis of the RHP is to modify the previous one to control the $N$ dependence of the jumps, in order to derive the $N \to \infty$ asymptotic behavior of $ \psi_{SG}(x,t;N)$
\cite{Girotti2021,Girotti2023}.   First, we introduce a vertical line segment $\gamma$ which connects the points $-\wo A$ and $-A$, as shown in Figure \ref{fig:K_contour}, and then we introduce two new functions $g(z)$ and $y(z)$, both being analytic in  $ \mathbb{C}\setminus\{\gamma \cup \eta_1 \cup \eta_2\}$.  As usual, these functions achieve their boundary values $g_{\pm}$ and $y_{\pm}$ in the sense of smooth functions on the contours $\gamma, \eta_{1}$ and $\eta_{2}$.  Then, we define the function $C(z)$ as follows
\begin{gather}\label{eq:eqdfnC}
C(z) = e^{(\ln(N) g_\infty + y_\infty) \sigma_3} B(z) e^{-(\ln(N)g(z)+y(z))\sigma_3}\,,
\end{gather}

where  $g_\infty, y_\infty$ are the values of $g(z)$ and $y(z)$ at $\infty$, which we assume are finite (this assumption will be verified later, since these two quantities will be explicitly constructed).  The function $C(z)$ satisfies the following RHP. 
\begin{RHP}
\label{rhp:K_intial}
		We look for a function $C(z)\in \text{Mat}(\C,2)$ such that:
		\begin{itemize}
			\item $C(z)$ is analytic on $\mathbb{C}\setminus\{\eta_1\cup \eta_2\cup \gamma\}$, and achieves its boundary values $C_{\pm}$ on $\eta_{1}$ and $\eta_{2}$ in the sense of smooth functions.
			\item The boundary values satisfy $C_{+}(z) = C_{-}(z) J_{C}(z)$, where the jump matrix $J_{C}(z)$ is given as follows:
\begin{gather*}
J_{C}(z)=  \begin{cases}
\begin{pmatrix}
     e^{\ln(N)\big( g_{-}(z) - g_{+}(z)\big) + y_-(z) - y_+(z)} & 0\\ \\[2pt]
2 \pi i N  h(z) \rho (z)\red{e^{2i\theta(z)}}e^{- \ln (N) \big( g_{-}(z) + g_{+}(z) \big) - (y_-(z) + y_+(z))}    &  e^{ \ln(N)\big( g_{+}(z) -  g_{-}(z) \big) + y_+(z) - y_-(z) }
    \end{pmatrix},  & z \in \eta_1 \\ \\[4pt]
    \begin{pmatrix}
     e^{\ln(N)\big( g_{-}(z) - g_{+}(z)\big) + \left(y_-(z) - y_+(z)\right)} & 2 \pi i N  h(z) \rho (z)\red{e^{-2i\theta( z)}} e^{\ln (N) \big( g_{-}(z) + g_{+}(z) \big) + y_+(z) + y_-(z)} \\ \\[2pt]
0   &  e^{ \ln(N)\big( g_{+}(z) -  g_{-}(z) \big) + y_+(z) - y_-(z)}
    \end{pmatrix},  & z \in \eta_2 \\ \\[4pt]
\begin{pmatrix}
 e^{\ln(N)\big( g_{-}(z) - g_{+}(z)\big) + \left(y_-(z) - y_+(z)\right)}& 0\\
0 & e^{ \ln(N)\big( g_{+}(z) -  g_{-}(z) \big) + y_+(z) - y_-(z)}
    \end{pmatrix},  & z \in \gamma
\end{cases}
\end{gather*}
\item $C(z) = I + \mathcal{O}(1/z)$, as $z \to \infty$.
\end{itemize}
\end{RHP}
\begin{figure}
    \centering

\tikzset{every picture/.style={line width=0.75pt}}        

\begin{tikzpicture}[x=0.5pt,y=0.5pt,yscale=-1,xscale=1]

\draw [color={rgb, 255:red, 208; green, 2; blue, 27 }  ,draw opacity=1 ][line width=1.5]    (125.33,93.11) -- (179.8,93.23) -- (503.8,93.94) -- (580.33,94.11) ;
\draw [shift={(144.27,93.15)}, rotate = 0.13] [fill={rgb, 255:red, 208; green, 2; blue, 27 }  ,fill opacity=1 ][line width=0.08]  [draw opacity=0] (11.61,-5.58) -- (0,0) -- (11.61,5.58) -- cycle    ;
\draw [shift={(333.5,93.57)}, rotate = 0.13] [fill={rgb, 255:red, 208; green, 2; blue, 27 }  ,fill opacity=1 ][line width=0.08]  [draw opacity=0] (11.61,-5.58) -- (0,0) -- (11.61,5.58) -- cycle    ;
\draw [shift={(533.77,94.01)}, rotate = 0.13] [fill={rgb, 255:red, 208; green, 2; blue, 27 }  ,fill opacity=1 ][line width=0.08]  [draw opacity=0] (11.61,-5.58) -- (0,0) -- (11.61,5.58) -- cycle    ;

\draw [color={rgb, 255:red, 208; green, 2; blue, 27 }  ,draw opacity=1 ][line width=1.5]    (125.73,183.51) -- (205.8,183.69) -- (484.2,184.3) -- (580.73,184.51) ;
\draw [shift={(172.57,183.61)}, rotate = 180.13] [fill={rgb, 255:red, 208; green, 2; blue, 27 }  ,fill opacity=1 ][line width=0.08]  [draw opacity=0] (11.61,-5.58) -- (0,0) -- (11.61,5.58) -- cycle    ;
\draw [shift={(351.8,184.01)}, rotate = 180.13] [fill={rgb, 255:red, 208; green, 2; blue, 27 }  ,fill opacity=1 ][line width=0.08]  [draw opacity=0] (11.61,-5.58) -- (0,0) -- (11.61,5.58) -- cycle    ;
\draw [shift={(539.27,184.42)}, rotate = 180.13] [fill={rgb, 255:red, 208; green, 2; blue, 27 }  ,fill opacity=1 ][line width=0.08]  [draw opacity=0] (11.61,-5.58) -- (0,0) -- (11.61,5.58) -- cycle    ;
 
\draw [color={rgb, 255:red, 65; green, 117; blue, 5 }  ,draw opacity=1 ][line width=1.5]    (125.33,93.11) -- (125.73,183.51) ;
\draw [shift={(125.56,145.11)}, rotate = 269.75] [fill={rgb, 255:red, 65; green, 117; blue, 5 }  ,fill opacity=1 ][line width=0.08]  [draw opacity=0] (11.61,-5.58) -- (0,0) -- (11.61,5.58) -- cycle    ;

\draw (405.61,181.91) node [anchor=north west][inner sep=0.75pt]  [rotate=-179.48]  {$+$};

\draw (194.02,183.5) node [anchor=north west][inner sep=0.75pt]  [rotate=-179.48]  {$+$};

\draw (419.28,114.4) node [anchor=north west][inner sep=0.75pt]  [rotate=-179.48]  {$+$};

\draw (253.67,113.08) node [anchor=north west][inner sep=0.75pt]  [rotate=-179.48]  {$+$};

\draw (419.64,92.9) node [anchor=north west][inner sep=0.75pt]  [rotate=-179.48]  {$-$};

\draw (254.34,93.17) node [anchor=north west][inner sep=0.75pt]  [rotate=-179.48]  {$-$};

\draw (194.15,202.92) node [anchor=north west][inner sep=0.75pt]  [rotate=-179.48]  {$-$};

\draw (404.95,203.93) node [anchor=north west][inner sep=0.75pt]  [rotate=-179.48]  {$-$};

\draw (324,89.05) node [anchor=south east] [inner sep=0.75pt]    {$\eta _{1}$};

\draw (326.4,178.78) node [anchor=south east] [inner sep=0.75pt]    {$\eta _{2}$};

\draw (105.33,128.4) node [anchor=north west][inner sep=0.75pt]    {$\gamma $};

\end{tikzpicture}

    \caption{Non-analyticity contour for the function $C(z)$}
    \label{fig:K_contour}
\end{figure}
As mentioned, we introduce the function $g(z)$ to control the $N$ dependence in the jump matrices. We require that $g(z)$ satisfies the following RHP.
\begin{RHP}
\label{RHP:g}
		We look for a scalar function $g(z)$ such that:
		\begin{itemize}
			\item $g(z)$ is analytic on $\mathbb{C}\setminus\{\eta_1\cup \eta_2\cup \gamma\}$
                \item $g(z)$ satisfies the conditions:
\begin{align}
\label{e:eq314a}
&g_{+}(z) + g_{-}(z) = 1, \quad z \in \eta_1 \\ 
\label{e:eq314b}
&g_{+}(z) + g_{-}(z) = - 1, \quad z \in \eta_2\\
\label{e:eq314c}
&g_{+}(z) - g_{-}(z) = c_1, \quad z \in \gamma
\end{align}
\item $g(z) = g_\infty + \mathcal{O}(1/z)$, as $z \to \infty$,
\end{itemize}
where $ c_1,\,g_\infty$ are two constants independent of $z$ still to be determined.
\end{RHP}
Moreover, we wish to make the off-diagonal entries of the jump matrix $J_{C}$  constants (for $z$ on the contours $\eta_{1}$ and $\eta_{2}$). This is accomplished by requiring that the function $y(z)$ satisfies the following RHP.
\begin{RHP}
\label{rhp:y}
We look for a scalar function $y(z)$ such that:
		\begin{itemize}
			\item $y(z)$ is analytic on $\mathbb{C}\setminus\{\eta_1\cup \eta_2\cup \gamma\}$
\item $y(z)$ satisfies the conditions:
\begin{equation}\label{e:eq3.2}
\begin{split}
y_{+}(z) + y_{-}(z) = \ln(2 \pi h(z) \rho(z)) + 2i\theta(z), & \quad z \in \eta_1  \\ 
y_{+}(z) + y_{-}(z)  = - \ln(2 \pi h(z) \rho(z)) + 2i\theta(z), &\quad z \in \eta_2  \\
y_+(z) -y_-(z) = \Delta, &\quad z \in \gamma 
\end{split}
\end{equation}
\item $y(z) = y_\infty + \mathcal{O}(1/z)$, as $z \to \infty$,
\end{itemize}
where $y_\infty$ is a constant still to be determined.
	\end{RHP}
Assuming that such $g(z),y(z)$ exist, then the jump matrix $J_{C}(z)$ reduces into:
\begin{gather}
\label{eq:new_jump_C}
J_{C}(z)=  \begin{cases}
\begin{pmatrix}
    e^{\ln(N)\big( g_{-}(z) - g_{+}(z)\big) + y_-(z) - y_+(z)} & 0\\ \\[2pt]
i & e^{ \ln(N)\big( g_{+}(z) - g_{-}(z) \big) + y_+(z) - y_-(z)}
    \end{pmatrix},  & z \in \eta_1 \\ \\[4pt]
    \begin{pmatrix}
     e^{\ln(N)\big( g_{-}(z) - g_{+}(z)\big) + y_-(z) - y_+(z)} & i \\ \\[2pt]
0   &   e^{\ln(N)\big( g_{+}(z) - g_{-}(z) \big) + y_+(z) - y_-(z) }
    \end{pmatrix},  & z \in \eta_2 \\ \\[4pt]
\begin{pmatrix}
e^{-c_1\ln (N) - \Delta} & 0\\
0 & e^{c_1 \ln (N)  + \Delta}
    \end{pmatrix},  & z \in \gamma
\end{cases}
\end{gather}

\subsection{The \texorpdfstring{$g(z)$}{g(z)} function}

To solve RHP \ref{RHP:g}, we proceed as follows. First, consider the RHP for $g'(z)$. Directly from the RHP for $g(z)$, we get the following. 
\begin{RHP}
   We look for a scalar function $g'(z)$ such that

    \begin{itemize}
			\item $g'(z)$ is analytic on $\mathbb{C}\setminus\{\eta_1\cup \eta_2\}$
                \item $g'(z)$ satisfies the conditions:
\begin{align}
&g'_{+}(z) + g'_{-}(z) = 0, \quad z \in \eta_1 \\ 
&g'_{+}(z) + g'_{-}(z) = 0, \quad z \in \eta_2
\end{align}
\item $g'(z) = \mathcal{O}(z^{-2})$, as $z \to \infty$.
\end{itemize}
\end{RHP}
The previous RHP can be solved explicitly as
\begin{equation}
    g'(z) = \frac{c_0}{ R(z)}, \quad R(z) = (z-A)^{\frac{1}{2}}(z+\wo{A})^{\frac{1}{2}}(z+A)^{\frac{1}{2}}(z-\wo{A})^{\frac{1}{2}}\,,
\end{equation}
for some $c_0 \in \C$, where for each factor $(\cdot )^{1/2}$ we use the principal branch, so that $R(z)$ is analytic in $\mathbb{C} \setminus \{ \eta_1 \cup \eta_2$\}, and is normalized such that 
\[ R(z) \sim z^2\,, \quad z\to\infty \,.\]
Define the function 
\begin{equation}\label{eq:fung}
    g(z) = c_0 \int_A^z \frac{1}{R(s)}\di s + \frac{1}{2}\,,
\end{equation}
then it is easy to check that $g$ satisfies the following conditions:
\begin{align}
&g_{+}(z) + g_{-}(z) = 1, \quad z \in \eta_1 \\ 
&g_{+}(z) + g_{-}(z) =  2c_0 \int_{\gamma} \frac{1}{R(s)}\di s + 1 , \quad z \in \eta_2\\
&g_{+}(z) - g_{-}(z) = 2c_0 \int_{\eta_1} \frac{1}{R_+(s)}\di s, \quad z \in \gamma
\end{align}
Imposing the conditions
\begin{equation}
    c_0= - \left(\int_\gamma \frac{1}{R(s)}\di s\right)^{-1}\, \mbox{and} \quad c_1 = 2c_0 \int_{\eta_1}\frac{1}{R_{+}(s)}\di s \, ,
    \end{equation}
one may verify that $g(z)$ in \eqref{eq:fung} solves RHP \ref{RHP:g}. 

For later usage, we list a series of properties that the function $R$ and its integrals satisfy. Since the proof is technical, we defer it to the appendix \ref{appendix:A}.
\begin{Proposition}
\label{prop:sym_R}
    The function $R$ defined as
    \begin{equation}
    \label{eq:def_R}
        R(z)=(z-A)^{\frac{1}{2}}(z+\wo{A})^{\frac{1}{2}}(z+A)^{\frac{1}{2}}(z-\wo{A})^{\frac{1}{2}}\,,
    \end{equation}
    where for each factor $(\cdot )^{1/2}$ we use the principle branch, satisfies the following properties

\begin{enumerate}
\item $R(z) = z^{2} - \frac{A^{2} + \wo{A}^{2}}{2} + \mathcal{O} \left( \frac{1}{z^{2}}\right)$, as $z \to \infty$,
    \item $R(\wo z) = \wo R(z)$,
    \item $R(- z) = R(z)$,
\end{enumerate}
Furthermore, 
\begin{equation}
    \label{eq:elliptic_integral}
    \int_{\eta_1} \frac{1}{R_+(z)}\di z = -\frac{K(\cos(\theta))}{|A|}\,,\quad \int_{-\wo A}^{-A}\frac{1}{R(s)}\di s=-i\frac{K(\sin(\theta))}{|A|}\,,
\end{equation}
where $ \theta=\arg(A)$, and  $K(k)$ is the complete elliptic integral of the first kind \cite[Chap. 19]{dlmf}.
\end{Proposition}

\begin{remark}
\label{rem:c_tau_rem}
For later usage, we define 
\begin{equation}
    \label{eq:c_tau_def}
    c= \frac{1}{2\int_{\eta_1} \frac{1}{R_+(z)}\di z}=- \frac{|A|}{2K(\cos(\theta))}\,,\quad \tau = \frac{\int_{-\wo A}^{-A} \frac{1}{R(z)}\di z}{\int_{\eta_1} \frac{1}{R_+(z)}\di z}=2i\frac{K(\cos(\theta))}{K(\sin(\theta))}\,,
\end{equation} and we notice that $c\in \R_-$, $\tau\in i\R_+$, and $\frac{2c}{\tau}\in i \R_+$. In this notation

\begin{equation}
    c_0 = - \frac{2c}{\tau}\,, \qquad c_1 = - \frac{2}{\tau}\,,
\end{equation}
therefore, one can rewrite $g(z)$ as

\begin{equation}
    \label{eq:g_def}
    g(z) = -\frac{2c}{\tau}\int_A^z\frac{\di s}{R(s)} + \frac{1}{2}
\end{equation}
Furthermore, as a corollary of Proposition \ref{prop:sym_R} the following holds:
\begin{equation}
\label{eq:IntegralEval}
 \frac{2c}{\tau}\int_{-\wo A}^{- A} \frac{1}{R(s)} \di s = 1, \quad \frac{2c}{\tau}\int_A^{-\wo A} \frac{1}{R_+(s)} \di s = \frac{1}{\tau}, \quad 
    -\frac{2c}{\tau}\int_A^\infty \frac{1}{R(s)} \di s = \frac{1}{2\tau} - \frac{1}{2}\,.
\end{equation}
\end{remark}

\subsection{The \texorpdfstring{$y(z)$}{y(z)} function}

We now find the explicit solution to RHP \ref{rhp:y}.
\begin{Proposition}\label{prop:prop210}
    The solution  $y \,:\, \C\to \C$ to the Riemann-Hilbert Problem \ref{rhp:y} is given by 
\begin{equation}
\label{eq:ydef00}
    y(z) = \frac{R(z)}{2 \pi i} \left( \int_{\eta_1} \frac{\log \left( 2 \pi h(s) \rho(s) \right) \red{+ 2i\theta(s,x,t)}}{R_{+}(s)(s-z)}\di s - \int_{\eta_2} \frac{\log \left( 2 \pi h(s) \rho(s) \right) \red{- 2i\theta(s,x,t)}}{R_{+}(s)(s-z)}\di s + \int_{\gamma} \frac{\Delta(x)}{R(s)(s-z)}\di s\right) \,,
\end{equation}
with $\Delta(x)$
\begin{equation}
\label{eq:Delta_def_new}
    \Delta(x) = -\frac{2c}{\tau} \left( 2\pi x  +2\Re\left( \int_{\eta_1} \frac{\log \,( 2 \pi h(s) \rho(s))}{R_{+}(s)} \di s\right)\right)\,.
\end{equation}
Lastly, $y_\infty = \lim_{z\to\infty}y(z)$ is finite. 
\end{Proposition}
\begin{remark}
    We notice that $\Delta(x)\in i\R$.
\end{remark}
\begin{proof}

By applying the Cauchy-Plemelj-Sokhotski formula we obtain that the function $y(z)$ has the following form

\begin{equation}
    y(z) = \frac{R(z)}{2 \pi i} \left( \int_{\eta_1} \frac{\log \left( 2 \pi h(s) \rho(s) \right) \red{+ 2i\theta(s,x,t)}}{R_{+}(s)(s-z)}\di s - \int_{\eta_2} \frac{\log \left( 2 \pi h(s) \rho(s) \right) \red{- 2i\theta(s,x,t)}}{R_{+}(s)(s-z)}\di s + \int_{\gamma} \frac{\wh \Delta(x,t)}{R(s)(s-z)}\di s\right) \,,
\end{equation}
with $\wh \Delta(x,t)$ defined as
\begin{gather}
\label{eq:delta_t}
\wh \Delta(x,t) = \frac{2c}{\tau} \left( - \int_{\eta_1} \frac{\log \, (2 \pi h(s) \rho(s)) + \red{2i\theta(s,x,t)}}{R_{+}(s)}\di s + \int_{\eta_2} \frac{\log (2 \pi h(s) \rho(s)) -  \red{2i\theta(s,x,t)}  }{R_{+}(s)}\di s \right)\,.
\end{gather}

It is straightforward to verify, using the asymptotic behavior $R(z) = z^{2} \left( 1 + \mathcal{O}\left( \frac{1}{z^{2}}\right) \right)$ as $z \to \infty$, that 
\begin{equation}
\begin{split}
        y(z) = \frac{z^2}{2\pi i} \Big( &-\frac{1}{z}\int_{\eta_1} \frac{\log \left( 2 \pi h(s) \rho(s) \right) + \red{2i\theta(s,x,t)} }{R_{+}(s)} \di s + \frac{1}{z} \int_{\eta_2} \frac{\log \left( 2 \pi h(s) \rho(s) \right) - 2i\theta(x,t,s)}{R_{+}(s)}\di s\\ & - \frac{1}{z} \int_{\gamma} \frac{\wh \Delta(x,t)}{R(s)}\di s + \mathcal{O}(z^{-2})\Big)
\end{split}
\end{equation}
and \eqref{eq:delta_t} guarantees that  $y_{\infty}:=\lim_{z\to\infty}y(z)$ exists. 
To conclude, we must show that $\wh \Delta(x,t) = \Delta(x)$ in \eqref{eq:Delta_def_new}.
First, we notice that $\wh \Delta(x,t)$ is independent of time. Indeed, the coefficient of $t$ in $\wh \Delta(x,t)$ is given by
\begin{equation}
  - 2i \Bigg( \int_{\eta_1} \frac{s^2}{R_{+}(s)}\di s + \int_{\eta_2} \frac{s^2}{R_{+}(s)}\di s \Bigg)
\end{equation}
which can be shown to be zero by residue calculation and by leveraging the symmetry of the endpoints of $\eta_1$. 
Therefore, \eqref{eq:delta_t} reads
\begin{equation}
    \wh\Delta(x,t) = \left( \int_{\gamma} \frac{1}{R(s)}\di s \right)^{-1} \left( - \int_{\eta_1} \frac{\log \, (2 \pi h(s) \rho(s)) + 2ixs}{R_{+}(s)}\di s + \int_{\eta_2} \frac{\log (2 \pi h(s) \rho(s)) -  2ix s }{R_{+}(s)}\di s \right).
\end{equation}
Moreover, by residue calculation, one can show that

\begin{equation}
    -2ix \left( \int_{\eta_1} \frac{s}{R(s)}\di s + \int_{\eta_1} \frac{s}{R(s)}\di s \right) = -2\pi x\,.
\end{equation}
Finally, applying Proposition \ref{prop:sym_R}, we know that $\int_\gamma \frac{1}{R(s)} \di s \in i \R$; so to conclude it remains to show that
\begin{equation}
    \label{eq:equiv_cond}
        \left( - \int_{\eta_1} \frac{\log \,( 2 \pi h(s) \rho(s)) }{R_{+}(s)} \di s + \int_{\eta_2} \frac{\log \left(2 \pi h(s) \rho(s)\right)}{R_{+}(s)} \di s \right) = -2\Re \left( \int_{\eta_1} \frac{\log \,( 2 \pi h(s) \rho(s)) }{R_{+}(s)} \di s \right)\,.
    \end{equation}

    Setting $z=\wt x -ib$, the second integral can be rewritten as

    \begin{equation}
    \begin{split}
        \int_{-a}^a \frac{\log \left(2 \pi h(\wt x-ib) \rho(\wt x-ib)\right)  }{R_+(\wt x-ib)}\di \wt x &  =\wo{ \int_{-a}^a \frac{\log \left(2 \pi h(\wt x+ib) \rho(\wt x+ib)\right) }{R_+(\wt x+ib)}\di \wt x }\\
        & \stackrel{s=\wt x + ib}{=} \wo{\int_{-\wo A}^A\frac{\log \left(2 \pi h(s) \rho(s)\right) }{R_+(s)}\di s}\\ &  = - \wo{\int_{\eta_1}\frac{\log \left(2 \pi h(s) \rho(s)\right)  }{R_+(s)}\di s}\,,
    \end{split}
    \end{equation}
    from which it follows that $\wh\Delta(x,t)\equiv\Delta(x)$, which completes the proof.
\end{proof} 

\section{Opening lenses}
\label{sec:Lenses}

We next introduce two further transformations.  The first involves identifying a contour deformation, so that the jump matrices (on the new contours) have entries that are bounded, but exhibit rapid oscillations.  The second transformation is referred to as ``opening lenses'', whose objective is to deform the Riemann-Hilbert problem so that rapidly oscillatory terms become exponentially decreasing.  As with the transformations from the previous section, these further transformations are invertible, so that knowledge of the solution of this new Riemann-Hilbert problem will be equivalent to knowledge of the original Riemann-Hilbert problem \ref{rhp:K_intial}.

To achieve the first transformation, we aim to find two contours $\eta_3,\eta_4$ such that the analytic extension of $g_+(z) - g_-(z)$ is purely imaginary for $z\in \eta_3,\eta_4$.
The existence of these two contours is a result of the following Lemma.
\begin{lemma}
\label{lem:immaginary_part}
    Let the function $g(z)$ be the unique solution to RHP \ref{RHP:g}. Define the function $u(z)$ for $z\in \C_+\setminus(\eta_1\cup\gamma)$ as  
 \begin{equation}
    \label{eq:udef}
        u(z)=-2g(z)+1= \frac{4c}{\tau} \int_A^z\frac{1}{R(s)}\di s , \ \qquad z\in \mathbb{C}_{+} \setminus\left( \eta_{1} \cup \gamma \right),
    \end{equation}
    where the path of integration is chosen to avoid $\eta_{1}\cup \eta_{2}\cup\gamma $.
        Let $\eta_{3}$ be the arc of the circle of radius $|A|$ centered at $0$ connecting $A$ and $-\overline{A}$.  We have  
      \begin{equation}
         u_{-}(z) = g_{+}(z) - g_{-}(z), \quad z \in \eta_{1}\,, \qquad u(z) \in i\mathbb{R}, \quad z\in \eta_3
    \end{equation}

As with \eqref{eq:udef}, we define
\begin{equation} \label{eq:utdef}
    \tilde{u}(z) = \frac{4c}{\tau} \int_{\wo A}^z\frac{1}{R(s)}\di s , \ \qquad z\in \mathbb{C}_{-} \setminus\left( \eta_{2} \cup \gamma \right),
\end{equation}
where again the path is chosen to avoid $\eta_{1} \cup \eta_{2} \cup \gamma$.
Let $\eta_{4}$ be the arc of the circle of radius $|A|$ centered at $0$ connecting $-A$ and $\overline{A}$.  We have  
    \begin{equation}
         \wt u_{-}(z) = g_{+}(z) - g_{-}(z), \quad z \in \eta_{2}\,, \qquad u(z) \in i\mathbb{R}, \quad z\in \eta_4
    \end{equation}
     Clearly  $\eta_4$ lies below $\eta_2$ and we take it to be oriented from $-A$ to $\wo A$ (like $\eta_2$).

Finally, define $\Omega_{3}^{+}$ to be the open region enclosed by $\eta_1,\eta_3$ and $\Omega_2^+$ the open region enclosed by $\eta_2,\eta_4$. Then we have

\begin{equation}
\label{eq:inequality}
    \begin{cases}
        \Re(u(z))<  0 \quad z \in \Omega_3^+ \\
        \Re(u(z))>0 \quad z \in \mathbb{C}_{+}\setminus\overline{\Omega_3^+}
    \end{cases}\,, \quad \begin{cases}
        \Re(\wt u(z))> 0 \quad z \in \Omega_2^+ \\
        \Re(\wt u(z))<0 \quad z \in \mathbb{C}_{-}\setminus \overline{\Omega_2^+}
    \end{cases}
\end{equation} 
where $\overline{\Omega_{3}^{+}}$ and $\overline{\Omega_{2}^{+}}$ denote the closure of those sets.
\end{lemma}
The contours $\eta_3,\eta_4,\gamma$ are displayed in figure \ref{fig:contour_imaginary}. Since the proof is technical, we defer it to the appendix \ref{appendix:A}.

\begin{remark}
\label{rem:sym_u_ut}
For future reference, note that
    \begin{eqnarray}&&
          \wt u(z) = +
 \frac{2}{\tau} - u(-z)\, , \quad z \in\ \ \fbox{\rule{0pt}{0.8ex}\hspace{0.8ex}} \\
 && \wt u(z) = -
 \frac{2}{\tau} - u(-z)\, , \quad z \not\in\ \ \fbox{\rule{0pt}{0.8ex}\hspace{0.8ex}} \\
    \end{eqnarray}
    where $\fbox{\rule{0pt}{0.8ex}\hspace{0.8ex}}$ is the rectangle formed by the real axis and the points  $\overline{A}$
and $-A$.  \end{remark}
Define the function $\widetilde{y}$ as follows: 
\begin{gather*}
\widetilde{y}(z) = \begin{cases}
-y(z) + \log \big( 2 \pi h(z) \rho (z) \big) + 2 i \theta (z), & \quad z \in \Omega_{3}^{+}\\[2pt]
-y(z) - \log \big( 2 \pi h(z) \rho (z) \big) + 2 i \theta (z), & \quad z \in \Omega_{2}^{+}\\[2pt]
y(z), & \quad z \, \text{elsewhere}
\end{cases}
\end{gather*}
which is analytic in $\mathbb{C} \setminus \left( \eta_{3} \cup \gamma \cup \eta_{4} \right)$ and satisfies the following conditions:
\begin{subequations}\label{eq:eqytilde}
\begin{gather}
\widetilde{y}_{+}(z) + \widetilde{y}_{-}(z) = \log \big( 2 \pi h(z) \rho (z) \big) + 2 i \theta (z), \quad z \in \eta_3\\[2pt]
\widetilde{y}_{+}(z) + \widetilde{y}_{-}(z) = -\log \big( 2 \pi h(z) \rho (z) \big) + 2 i \theta (z), \quad z \in \eta_4\\[2pt]
\widetilde{y}_{+}(z) - \widetilde{y}_{-}(z) = \Delta(x), \quad z \in \gamma
\end{gather}
\end{subequations}
Moreover, define the matrix function $D$ as follows
\begin{equation}
    D(z) = \begin{cases}
    C(z), \quad & z \not\in \Omega_{3}^{+} \cup \Omega_{2}^{+}\\
    C(z) J_C(z), \quad & z \in \Omega_{3}^{+} \cup \Omega_{2}^{+}.
    \end{cases}
\end{equation}
Here the matrix $J_{C}$ in \eqref{eq:new_jump_C}, which was originally defined only on the contours $\eta_{1}$, $\eta_{2}$, and $\gamma$, possesses an analytic extension to the domains $\Omega_{3}^{+}$ and $\Omega_{2}^{+}$, using $\widetilde{y}$, $u$, and $\tilde{u}$ where needed.

Applying Lemma \ref{lem:immaginary_part} and using the definition of $D$, we can check that $D$ satisfies the following RHP.
\begin{RHP}
\label{RHP:immaginary_jump}
    Find a matrix function $D \in \mathrm{Mat}(2,\mathbb{C})$ holomorphic in $\mathbb{C}\setminus\{\eta_3\cup\eta_4\cup\gamma\}$ such that 
 \begin{enumerate}
        \item $D_{+}(z) = D_{-}(z) J_{D}(z)$ 
\begin{gather*}
J_{D}(z)=  \begin{cases}
\begin{pmatrix}
   e^{-\ln(N)u(z)  + \widetilde{y}_{-}(z) - \widetilde{y}_{+}(z)} & 0\\ \\[2pt]
i &  e^{ \ln(N)u(z) + \widetilde{y}_{+}(z) - \widetilde{y}_{-}(z)}
    \end{pmatrix},  & z \in \eta_3 \\ \\[4pt]
    \begin{pmatrix}
     e^{-\ln(N)\tilde{u}(z) +  \widetilde{y}_{-}(z) - \widetilde{y}_{+}(z)} & i\\ \\[2pt]
0   &  e^{\ln(N)\tilde{u}(z)  +  \widetilde{y}_{+}(z) - \widetilde{y}_{-}(z)}
    \end{pmatrix},  & z \in \eta_4 \\ \\[4pt]
\begin{pmatrix}
 e^{\frac{2}{\tau}\ln(N) - \Delta(x)} & 0\\
0 &  e^{-\frac{2}{\tau}\ln(N) +\Delta(x)}
    \end{pmatrix},  & z \in \gamma
\end{cases}
\end{gather*}
\item $D(z) = I + \mathcal{O}(1/z)$, as $z \to \infty$.
\item $u(z)\in i\R$ for $z\in \eta_3$, and $\tilde{u}(z) \in i \R $ for $ z \in \eta_4$.
    \end{enumerate}
\end{RHP}
\begin{figure}
    \centering

\tikzset{every picture/.style={line width=0.75pt}}      

\begin{tikzpicture}[x=0.5pt,y=0.5pt,yscale=-1,xscale=1]

\draw  [dash pattern={on 4.5pt off 4.5pt}]  (132,140.5) -- (469,140) ;

\draw  [dash pattern={on 4.5pt off 4.5pt}]  (131.8,346.5) -- (468.8,346) ;

\draw [color={rgb, 255:red, 65; green, 117; blue, 5 }  ,draw opacity=1 ][line width=2.25]    (132,140.5) -- (131.8,346.5) ;
\draw [shift={(131.89,252.1)}, rotate = 270.06] [fill={rgb, 255:red, 65; green, 117; blue, 5 }  ,fill opacity=1 ][line width=0.08]  [draw opacity=0] (14.29,-6.86) -- (0,0) -- (14.29,6.86) -- cycle    ;

\draw  [draw opacity=0][line width=2.25]  (132,140.5) .. controls (167.16,82.19) and (231.4,43.57) .. (304.31,44.55) .. controls (374.52,45.49) and (435.71,82.92) .. (470.12,138.58) -- (301.65,242.88) -- cycle ; \draw [color={rgb, 255:red, 208; green, 2; blue, 27 }  ,draw opacity=1 ][line width=2.25]    (132,140.5) .. controls (167.16,82.19) and (231.4,43.57) .. (304.31,44.55) .. controls (372.77,45.47) and (432.64,81.07) .. (467.49,134.44) ; \draw [shift={(132,140.5)}, rotate = -57.] [fill={rgb, 255:red, 208; green, 2; blue, 27 }, fill opacity=1][line width=0.08][draw opacity=0] (11.79,-5.66) -- (0,0) -- (11.79,5.66) -- (7.83,0) -- cycle ;

\draw  [draw opacity=0][line width=2.25]  (468.8,346) .. controls (433.87,402.13) and (371.72,439.51) .. (300.82,439.58) .. controls (229.51,439.66) and (166.96,401.96) .. (132.03,345.35) -- (300.62,241.23) -- cycle ; \draw [color={rgb, 255:red, 208; green, 2; blue, 27 }  ,draw opacity=1 ][line width=2.25]    (465.58,351) .. controls (430.14,404.34) and (369.59,439.51) .. (300.82,439.58) .. controls (229.51,439.66) and (166.96,401.96) .. (132.03,345.35) ;  \draw [shift={(468.8,346)}, rotate = 123.89] [fill={rgb, 255:red, 208; green, 2; blue, 27 }  ,fill opacity=1 ][line width=0.08]  [draw opacity=0] (14.29,-6.86) -- (0,0) -- (14.29,6.86) -- cycle    ;

\draw (101.33,218.4) node [anchor=north west][inner sep=0.75pt]  [font=\Large]  {$\gamma $};

\draw (286,147.4) node [anchor=north west][inner sep=0.75pt]  [font=\Large]  {$\eta _{1}$};

\draw (286.67,303.07) node [anchor=north west][inner sep=0.75pt]  [font=\Large]  {$\eta _{2}$};

\draw (289.33,-0.27) node [anchor=north west][inner sep=0.75pt]  [font=\Large]  {$\eta _{3}$};

\draw (304.21,468.82) node [anchor=south] [inner sep=0.75pt]  [font=\Large]  {$\eta _{4}$};

\draw (283.33,361.4) node [anchor=north west][inner sep=0.75pt]  [font=\Large]  {$\Omega _{2}^{+}$};

\draw (282.67,80.73) node [anchor=north west][inner sep=0.75pt]  [font=\Large]  {$\Omega _{3}^{+}$};

\end{tikzpicture}

    \caption{Jump contours for RHP \ref{RHP:immaginary_jump} and for $\wt y(z)$}
    \label{fig:contour_imaginary}
\end{figure}
\subsection{Jump matrix factorization}

On $\eta_{3}$, the jump matrix $J_{D}$ factors as follows
\begin{gather}
 J_{D}(z) = \begin{pmatrix}
e^{-\ln(N)u(z)  + \widetilde{y}_{-}(z) - \widetilde{y}_{+}(z)}& 0\\ \\[2pt]
i & e^{ \ln(N)u(z) +  \widetilde{y}_{+}(z) - \widetilde{y}_{-}(z)}
    \end{pmatrix} \\
    = \begin{pmatrix}
1 & -i e^{-\ln(N) u(z) +  \widetilde{y}_{-}(z) - \widetilde{y}_{+}(z)} \\
0 & 1
    \end{pmatrix} \begin{pmatrix}
    0 & i \\
    i & 0
    \end{pmatrix} \begin{pmatrix}
    1 & -i e^{\ln(N)u(z)  + \widetilde{y}_{+}(z) - \widetilde{y}_{-}(z)}  \\
    0 & 1
    \end{pmatrix}
\end{gather}
and on $\eta_4$ as
\begin{gather*}
 J_{D}(z) = \begin{pmatrix}
e^{-\ln(N)\left( \tilde{u}(z)  \right)  + \widetilde{y}_{-}(z) - \widetilde{y}_{+}(z)}  & i \\ \\[2pt]
0   & e^{\ln(N)\left( \tilde{u}(z) \right)  + \widetilde{y}_{+}(z) - \widetilde{y}_{-}(z)}  
    \end{pmatrix}\\
= \begin{pmatrix}
1 & 0\\
-i e^{\ln(N)\tilde{u}(z)   + \widetilde{y}_{+}(z) - \widetilde{y}_{-}(z)}  & 1
\end{pmatrix} \begin{pmatrix}
0 & i\\
i & 0
\end{pmatrix} \begin{pmatrix}
1 & 0\\
-i e^{-\ln(N) \tilde{u}(z)  + \widetilde{y}_{-}(z) - \widetilde{y}_{+}(z)}  & 1
\end{pmatrix}.
\end{gather*}
Thanks to the conditions that the function $\widetilde{y}$ satisfies on $\eta_3$ and $\eta_4$, we can rewrite the jump for $D$:
\begin{gather*}
J_{D}(z)= \begin{cases}
\begin{pmatrix}
1 & -i e^{-\ln(N) u(z)} \frac{e^{2 \widetilde{y}_{-}(z)}}{2 \pi h(z) \rho (z) e^{2 i \theta(z)}} \\
0 & 1
\end{pmatrix} \begin{pmatrix}
    0 & i\\
    i & 0
\end{pmatrix} \begin{pmatrix}
1 & -i e^{\ln(N) u(z)} \frac{e^{2 \widetilde{y}_{+}(z)}}{2 \pi h(z) \rho (z) e^{2 i \theta(z)}} \\ 
0 & 1
\end{pmatrix}, \quad & z \in \eta_3 \\ \\[4pt]
\begin{pmatrix}
1 & 0 \\
-i e^{\ln(N) \tilde{u}(z)} \frac{e^{-2 \widetilde{y}_{-}(z)}}{2 \pi h(z) \rho (z) e^{-2 i \theta(z)}} & 1
\end{pmatrix}
\begin{pmatrix}
0 & i \\
i & 0
\end{pmatrix}\begin{pmatrix}
1 & 0 \\
-i e^{-\ln(N) \tilde{u}(z)} \frac{e^{-2 \widetilde{y}_{+}(z)}}{2 \pi h(z) \rho (z) e^{-2 i \theta(z)}} & 1
\end{pmatrix}, \quad & z \in \eta_4\\ \\[4pt]
\begin{pmatrix}
e^{\frac{2}{\tau} \ln(N) -\Delta(x) } & 0 \\
0 & e^{-\frac{2}{\tau} \ln(N) + \Delta(x) }
\end{pmatrix}, \quad & z \in \gamma
\end{cases}
\end{gather*}
We define the matrices
\begin{equation}
\label{eq:jump_factor_1}
\begin{split}
&\mathfrak{M}^{-}(z) = \begin{pmatrix}
1 & -i e^{-\ln(N) u(z)} \frac{e^{2 \widetilde{y}(z)}}{2 \pi h(z) \rho(z) e^{2 i \theta (z)}} \\
0 & 1
\end{pmatrix}, \quad \mathfrak{M}^{+}(z) =\begin{pmatrix}
1 & -i e^{\ln(N) u(z)} \frac{e^{2 \widetilde{y}(z)}}{2 \pi h(z) \rho(z) e^{2 i \theta (z)}} \\
0 & 1
\end{pmatrix}\\
& \mathfrak{N}^{-}(z) = \begin{pmatrix}
1 & 0\\
-i e^{-\ln(N) \tilde{u}(z) } \frac{e^{-2 \widetilde{y}(z)}}{2 \pi h(z) \rho(z) e^{-2i \theta(z)}} & 1
\end{pmatrix}, \quad \mathfrak{N}^{+}(z) = \begin{pmatrix}
1 & 0\\
-i e^{\ln(N)\tilde{u}(z) } \frac{e^{-2 \widetilde{y}(z)}}{2 \pi h(z) \rho(z) e^{-2i \theta(z)}} & 1
\end{pmatrix}\,.
\end{split}
\end{equation}
The off-diagonal entries of the matrices $ \mathfrak{M}^{\pm}$ are piecewise analytic functions with jumps across the contours $\eta_{1}$ and $\eta_{3}$.  Notice that because $u$ changes signs across the contour $\eta_{1}$, we have
\begin{equation}
    \left(\mathfrak{M}^{+}\right)_{-}(z) = \left(\mathfrak{M}^{-}\right)_{+}(z), \ z \in \eta_{1}.
\end{equation}

Analogously, the off-diagonal entries of $\mathfrak{N}^{\pm}$ are piecewise analytic, with jumps across the contours $\eta_{2}$ and $\eta_{4}$.  Notice that because $\tilde{u}$ changes signs across $\eta_{2}$, we have
\begin{equation}
    \left(\mathfrak{N}^{-}\right)_{-}(z) = \left(\mathfrak{N}^{+}\right)_{+}(z), \ z \in \eta_{2}.
\end{equation}

 We can rewrite the jump matrix $J_{D}$ for the matrix $D$ as follows.

\begin{gather*}
J_{D}(z) = \begin{cases} \left(\mathfrak{M}^{-}\right)_{-}(z) \begin{pmatrix}
0 & i\\
i & 0
\end{pmatrix} \left( \mathfrak{M}^{+}\right)_{+}(z), \quad & z \in \eta_3\\ \\[2pt]
\left( \mathfrak{N}^{+}\right)_{-}(z) \begin{pmatrix}
0 & i\\
i & 0
\end{pmatrix} \left( \mathfrak{N}^{-}\right)_{+}(z), \quad & z \in \eta_4\\ \\[2pt]
\begin{pmatrix}
e^{\frac{2}{\tau} \ln(N) -\Delta(x) } & 0 \\
0 & e^{-\frac{2}{\tau} \ln(N) + \Delta(x) }
\end{pmatrix}, \quad & z \in \gamma
\end{cases}
\end{gather*}

The next transformation is from $D(z)$ to $E(z)$, the so-called ``opening of lenses'' transformation.  Our aim is to arrive at a new RHP for a function $E(z)$ with constant jumps on the contours $\eta_3,\eta_4,\gamma$ and ``small'' jumps on the other 4 contours $\omega_1,\omega_2,\omega_3,\omega_4$ that we construct according to the following proposition.

\begin{Proposition}
\label{prop:small_norm}
Fix $\min(\Re(A),\Im(A))> 3\varepsilon > 0$, let $\cU_A,\cU_{\wo A},\cU_{-A},\cU_{-\wo A}$ be 4 disks  of radius $\varepsilon$ centered at $A,\wo A, -A, -\wo A$ respectively and define $\cV = \C \setminus \{\cU_A\cup \cU_{\wo A}\cup\cU_{-A}\cup\cU_{-\wo A}\}$. Consider the matrices $\mathfrak{M}^{-},\mathfrak{N}^{+}$ \eqref{eq:jump_factor_1}, then there exist four contours $\omega_1,\omega_2,\omega_3,\omega_4$, a $\kappa>0$ and a $N_0$ such that for all $N>N_0$ 
\begin{equation}
        \mathfrak{N}^{+}(z) = \begin{pmatrix}
            1 & 0 \\
            0 & 1
        \end{pmatrix} + O(N^{-\kappa}),  \quad z\in \left(\omega_{1} \cup\omega_2\right) \cap \cV; \quad \mathfrak{M}^{-}(z) = \begin{pmatrix}
            1 & 0 \\
            0 & 1
        \end{pmatrix} + O(N^{-\kappa}), \quad z\in \left(\omega_3\cup\omega_4 \right)\cap \cV\,.
    \end{equation}
      In particular, the contours $\omega_{1},\omega_2\subset \C_-\setminus\wo \Omega_2^+$ lie on opposing sides of $\eta_{4}$, and $\omega_{3},\omega_4\subset \C_+\setminus\wo \Omega_3^+$ lie on opposing sides of $\eta_{3}$. An example of these contours is shown in Fig. \ref{fig:small_norm}.
\end{Proposition}
\begin{proof}
We focus on $\mathfrak{M}^-$, the proof in the other situation is analogous. First, we notice that in view of the definition \eqref{eq:jump_factor_1}, we must show that there exist two contours $\omega_3,\omega_4$ connecting $A$ with $-\wo A$, a $\kappa>0$ and a $N_0$ such that for all $N>N_0$

\begin{equation}
    -i e^{-\ln(N) u(z)} \frac{e^{2 \widetilde{y}(z)}}{2 \pi h(z) \rho(z) e^{2 i \theta (z)}} = O(N^{-\kappa})\,, \quad z\in \left(\omega_3\cup \omega_4\right)\cap \cV\,.
\end{equation}

Consider the set $\C_+\setminus(\wo \Omega_3^+ \cup \gamma)$. We notice that in these regions the quantities $h(z)$, $\rho(z)$, $\theta(z)$ and $\wt y(z)$ are all analytic and bounded.  Moreover, \eqref{eq:inequality} holds.
(See Assumptions \ref{ass:sym} and Proposition \ref{prop:prop210}, and Lemma \ref{lem:immaginary_part} to verify these properties.) 
Therefore, we can choose $\omega_3,\omega_4\subset \C_+\setminus(\wo \Omega_3^+ \cup \gamma)$ (it is convenient to pick $\omega_{3},\omega_4$ to be analytic arcs).  Finally, since $\Re(u(z))$ is continuous on $(\omega_3\cup\omega_4)\cap \cV$, there exists $\kappa>0$ such that
\[ \min_{z\in\omega_3\cap\cV} \Re(u(z)) = \kappa\,,\]
so there exists $N_0$ such that for all $N>N_0$ 
\begin{equation}
    -i e^{-\ln(N) u(z)} \frac{e^{2 \widetilde{y}(z)}}{2 \pi h(z) \rho(z) e^{2 i \theta (z)}} = O(N^{-\kappa})\,, \quad z\in \omega_3\cap \cV\,,
\end{equation}
proving the result.
\end{proof}
Given these 4 contours, we can define a new function $E(z)$ as \begin{gather}\label{eq:eqdfnE}
E(z) = \begin{cases}
D(z) \, \mathfrak{N}^{+}(z), \quad & z \in \Omega_1\\
D(z) \, \left( \mathfrak{N}^{-}(z)\right)^{-1}, \quad & z \in \Omega_2^{+}\\
D(z) \, \left(\mathfrak{N}^{+}(z)\right)^{-1}, \quad & z \in \Omega_2^{-}\\
D(z) \, \left(\mathfrak{M}^{+}(z)\right)^{-1}, \quad & z \in \Omega_3^{+}\\
D(z) \, \left(\mathfrak{M}^{-}(z)\right)^{-1}, \quad & z \in \Omega_3^{-}\\
D(z) \, \mathfrak{M}^{-}(z), \quad & z \in \Omega_4\\
D(z), \quad & \text{elsewhere}
\end{cases}
\end{gather}
where the sets $\Omega_2^{+},\Omega_2^{-},\Omega_3^{+},\Omega_3^{-}$ are as in figure \ref{fig:small_norm} and $\mathfrak{M}^{+},\mathfrak{M}^{-},\mathfrak{N}^{+},\mathfrak{N}^{-}$ are defined in \eqref{eq:jump_factor_1}. The matrix $ E(z)$ solves the following RHP.
\begin{RHP}
\label{rhp:small_norm}
Find a matrix function $E(z)\in \mathrm{Mat}(2,\mathbb{C})$ analytic in $\mathbb{C}\setminus\{\eta_3\cup\eta_4\cup\gamma\cup \omega_1\cup \omega_2\cup \omega_3\cup \omega_4\}$ such that 
\begin{enumerate}
        \item $E_{+}(z) = E_{-}(z) J_E(z)$ 
\begin{gather*}
J_{E}(z) = \begin{cases}
\begin{pmatrix}
0 & i\\
i & 0
\end{pmatrix}, \quad & z \in \eta_3 \cup \eta_4\\ \\[4pt]
\begin{pmatrix}
e^{\frac{2}{\tau} \ln(N)-\Delta(x)} & 0 \\
0 & e^{-\frac{2}{\tau} \ln(N)+\Delta(x) }
\end{pmatrix}, \quad & z \in \gamma\\ \\[4pt]
\mathfrak{M}^{-}(z), \quad & z \in \omega_3 \cup \omega_4\\ \\[4pt]
\mathfrak{N}^{+}(z), \quad & z \in \omega_1 \cup \omega_2
\end{cases}
\end{gather*}
\item $E(z) = I + \mathcal{O}(1/z)$, as $z \to \infty$.
\end{enumerate}
\end{RHP}

\begin{figure}
    \centering

\tikzset{every picture/.style={line width=0.75pt}} 

\begin{tikzpicture}[x=0.4pt,y=0.4pt,yscale=-1,xscale=1]

\draw  [dash pattern={on 4.5pt off 4.5pt}]  (120.93,240.75) -- (542.33,240.15) ;
 
\draw  [dash pattern={on 4.5pt off 4.5pt}]  (120.68,487.88) -- (542.08,487.28) ;

\draw [color={rgb, 255:red, 65; green, 117; blue, 5 }  ,draw opacity=1 ][line width=2.25]    (120.93,240.75) -- (120.68,487.88) ;
\draw [shift={(120.8,372.92)}, rotate = 270.06] [fill={rgb, 255:red, 65; green, 117; blue, 5 }  ,fill opacity=1 ][line width=0.08]  [draw opacity=0] (14.29,-6.86) -- (0,0) -- (14.29,6.86) -- cycle    ;

\draw  [draw opacity=0][line width=2.25]  (120.93,240.75) .. controls (164.79,170.4) and (245.36,123.75) .. (336.84,124.93) .. controls (426.22,126.09) and (503.9,172.6) .. (546.48,241.37) -- (333.52,362.87) -- cycle ; \draw [color={rgb, 255:red, 208; green, 2; blue, 27 }  ,draw opacity=1 ][line width=2.25]    (124.29,235.52) .. controls (168.83,168.1) and (247.64,123.78) .. (336.84,124.93) .. controls (426.22,126.09) and (503.9,172.6) .. (546.48,241.37) ;  \draw [shift={(120.93,240.75)}, rotate = 303.15] [fill={rgb, 255:red, 208; green, 2; blue, 27 }  ,fill opacity=1 ][line width=0.08]  [draw opacity=0] (14.29,-6.86) -- (0,0) -- (14.29,6.86) -- cycle    ;

\draw  [draw opacity=0][line width=2.25]  (545.56,485.18) .. controls (502.5,555.13) and (423.24,602.07) .. (332.54,602.17) .. controls (242.81,602.26) and (164.19,556.49) .. (120.68,487.88) -- (332.29,364.23) -- cycle ; \draw [color={rgb, 255:red, 208; green, 2; blue, 27 }  ,draw opacity=1 ][line width=2.25]    (542.26,490.39) .. controls (498.53,557.45) and (420.97,602.07) .. (332.54,602.17) .. controls (242.81,602.26) and (164.19,556.49) .. (120.68,487.88) ;  \draw [shift={(545.56,485.18)}, rotate = 123.11] [fill={rgb, 255:red, 208; green, 2; blue, 27 }  ,fill opacity=1 ][line width=0.08]  [draw opacity=0] (14.29,-6.86) -- (0,0) -- (14.29,6.86) -- cycle    ;

\draw [line width=1.5]    (120.93,240.75) .. controls (122,354.5) and (548,332.5) .. (546.48,241.37) ;
\draw [shift={(325.1,318.06)}, rotate = 359.11] [fill={rgb, 255:red, 0; green, 0; blue, 0 }  ][line width=0.08]  [draw opacity=0] (13.4,-6.43) -- (0,0) -- (13.4,6.44) -- (8.9,0) -- cycle    ;

\draw [line width=1.5]    (120.68,487.88) .. controls (122,367.5) and (545.16,412.72) .. (546.23,488.5) ;
\draw [shift={(336.73,414.81)}, rotate = 183.09] [fill={rgb, 255:red, 0; green, 0; blue, 0 }  ][line width=0.08]  [draw opacity=0] (13.4,-6.43) -- (0,0) -- (13.4,6.44) -- (8.9,0) -- cycle    ;

\draw [line width=1.5]    (120.68,487.88) .. controls (121,771) and (554,768) .. (546.23,488.5) ;
\draw [shift={(342.93,699.07)}, rotate = 179.09] [fill={rgb, 255:red, 0; green, 0; blue, 0 }  ][line width=0.08]  [draw opacity=0] (13.4,-6.43) -- (0,0) -- (13.4,6.44) -- (8.9,0) -- cycle    ;

\draw [line width=1.5]    (120.93,240.75) .. controls (122,-42.5) and (545,-19) .. (546.48,241.37) ;
\draw [shift={(324.01,37.12)}, rotate = 0.16] [fill={rgb, 255:red, 0; green, 0; blue, 0 }  ][line width=0.08]  [draw opacity=0] (13.4,-6.43) -- (0,0) -- (13.4,6.44) -- (8.9,0) -- cycle    ;

\draw (84.59,336.62) node [anchor=north west][inner sep=0.75pt]  [font=\Large]  {$\gamma $};

\draw (316.75,251.74) node [anchor=north west][inner sep=0.75pt]  [font=\Large]  {$\eta _{1}$};

\draw (317.59,438.49) node [anchor=north west][inner sep=0.75pt]  [font=\Large]  {$\eta _{2}$};

\draw (309.92,163.59) node [anchor=north west][inner sep=0.75pt]  [font=\Large]  {$\eta _{3}$};

\draw (329.27,590.91) node [anchor=south] [inner sep=0.75pt]  [font=\Large]  {$\eta _{4}$};

\draw (314.8,508.97) node [anchor=north west][inner sep=0.75pt]  [font=\Large]  {$\Omega _{2}^{+}$};

\draw (346.96,195.27) node [anchor=north west][inner sep=0.75pt]  [font=\Large]  {$\Omega _{3}^{+}$};

\draw (206.8,431.97) node [anchor=north west][inner sep=0.75pt]  [font=\Large]  {$\Omega _{2}^{-}$};

\draw (311.8,632.97) node [anchor=north west][inner sep=0.75pt]  [font=\Large]  {$\Omega _{1}$};

\draw (278.8,72.97) node [anchor=north west][inner sep=0.75pt]  [font=\Large]  {$\Omega _{2}$};

\draw (378.96,263.27) node [anchor=north west][inner sep=0.75pt]  [font=\Large]  {$\Omega _{3}^{-}$};

\draw (423,686.4) node [anchor=north west][inner sep=0.75pt]  [font=\Large]  {$\omega _{1}$};

\draw (416,389.4) node [anchor=north west][inner sep=0.75pt]  [font=\Large]  {$\omega _{2}$};

\draw (432,308.4) node [anchor=north west][inner sep=0.75pt]  [font=\Large]  {$\omega _{3}$};

\draw (414,21.4) node [anchor=north west][inner sep=0.75pt]  [font=\Large]  {$\omega _{4}$};

\end{tikzpicture}

    \caption{Contour for RHP \ref{rhp:small_norm} }
    \label{fig:small_norm}
\end{figure}
\section{Global Parametrix}
\label{sec:global}
In this section, we define and solve the model problem that yields the leading order asymptotic behavior for the Riemann-Hilbert problem \ref{rhp:Atil} for $z$ away from the four branch points.  The following RHP comes from the small norm approximation that we have done in the previous section.

\begin{RHP}
    \label{RHP:model_problem}
    Find a matrix function $\Lambda(z)\in \mathrm{Mat}(2,\mathbb{C})$ holomorphic in $\mathbb{C}\setminus\{\eta_3\cup\eta_4\cup\gamma\}$ such that

    \begin{enumerate}
        \item $\Lambda_{+}(z) = \Lambda _{-}(z) J_\Lambda(z)$ 
\begin{equation}
J_{\Lambda}(z)=  \begin{cases}
\begin{pmatrix}
    0 & i \\
    i & 0
\end{pmatrix}\,, & z\in \eta_3\cup\eta_4 \\ \\[4pt]

\begin{pmatrix}
 e^{- \Delta(x)  + \frac{2\ln(N)}{\tau}} & 0\\
0 & e^{\Delta(x) -  \frac{2\ln(N)}{\tau}}
    \end{pmatrix},  & z \in \gamma 
\end{cases}
\end{equation}
\item $\Lambda(z)$ satisfies the asymptotic behavior: $\Lambda(z) = I + \mathcal{O}(1/z)$, as $z \to \infty$
\item $\Lambda$ has $\frac{1}{4}$-root singularities (each entry of $\Lambda$) at $z=A,-A,\wo{A},-\wo{A}$.
\end{enumerate}
\end{RHP}
\begin{remark}
Note that in all previous RHPs, the solution was bounded at each branch point.  Here, local analysis shows that we must have $1/4$-root singularities at each branch point.
\end{remark}
Following the procedure in \cite{Ken_NLS}, we factor the matrix $\Lambda(z)$ as

\begin{equation}
\label{eq:eqLambda}
    \Lambda(z) = \xi(z) e^{i \nu_0 \sigma_3} \Xi(z) e^{i\nu(z)\sigma_3}\,,
\end{equation}
where $\xi,\nu$ are two scalar functions, $\Xi(z)\in\textrm{Mat}(2,\mathbb{C})$, and $\nu_0$ is a constant to be determined. Specifically, $\xi$ is the solution to the following RHP.

\begin{RHP}
    \label{RHP: RHP for ksi} 
    Find a scalar function $\xi(z)$ holomorphic in $\mathbb{C}\setminus\{\eta_3\cup\eta_4\}$ such that
    \begin{enumerate}
        \item $\xi_{+}(z) = i \, \xi_{-}(z), \quad z \in \eta_3\cup\eta_4$
        \item $\xi(z) = 1 + \mathcal{O}(1/z)$, as $z \to \infty$
        \item $\xi$ has $\frac{1}{4}$-root singularities at $z=A,-A$.
    \end{enumerate}
\end{RHP}
Directly by applying the Sokhotski-Plemelj formula we get 
\begin{equation}
\xi (z) = \left( \frac{z +\wo{A}}{z-A} \right)^{1/4} \left( \frac{z -\wo{A}}{z+A} \right)^{1/4}.
\end{equation}

Moreover, the function $\nu(z)$ satisfies the following RHP.

\begin{RHP}
\label{RHP: RHP for nu} Find a scalar function $\nu(z)$ holomorphic in $\mathbb{C}\setminus\{\eta_3\cup\eta_4\cup\gamma\}$ such that
    \begin{enumerate}
        \item $\nu_{+}(z) + \nu_{-}(z) = 0, \quad z \in \eta_3\cup\eta_4$
        \item $\nu_{+}(z) - \nu_{-}(z) = i \left( \Delta(x) - 2\frac{\ln(N)}{\tau}\right), \quad z \in \gamma$.
    \end{enumerate}
  
\end{RHP}
We can solve the previous RHP explicitly. Let $S(z) = (z-A)^{\frac{1}{2}}(z-\wo A)^{\frac{1}{2}}(z+A)^{\frac{1}{2}}(z+\wo A)^{\frac{1}{2}}$ and choose the branch-cuts in such a way that $S$ is analytic in $\mathbb{C} \setminus \{ \eta_3 \cup \eta_4 \}$, and $S(z)\sim z^2$ at infinity. Consider $\nu(z) = S(z)\phi(z)$, where $\phi$ is analytic everywhere except on $\gamma$. Directly from the jump conditions that the functions $S$ and $\nu$ satisfy, we can deduce the jump condition for $\phi$
    \begin{equation}
    \phi_{-}(z) - \phi_{+}(z) = \frac{i \left( \Delta(x) - 2\frac{\ln(N)}{\tau} \right)}{S(z)}, \quad z \in \gamma    
    \end{equation}
which implies that 
\begin{equation}
    \phi(z) = \frac{1}{2 \pi } \int_{\gamma} \frac{1}{S(s)} \frac{\Delta(x) - 2\frac{\ln(N)}{\tau}}{s - z} \, \di s.
\end{equation}
From there, we can deduce the solution to the RHP for $\nu$
\begin{equation}
    \nu(z) = \frac{S(z)}{2 \pi } \int_{\gamma} \frac{1}{S(s)} \frac{\Delta(x) - 2\frac{\ln(N)}{\tau}}{s - z} \, \di s.
\end{equation}
From the last expression we determine the large-$z$ asymptotics for the function $\nu (z)$
    \begin{equation}
    \nu(z) = a_1 z + a_0 + \mathcal{O}(1/z), \quad z \to \infty
    \end{equation}
    \begin{equation}
    \label{eq:def_a1_a0}
a_1 = - \frac{1}{2 \pi } \int_{\gamma} \frac{\Delta(x) - \frac{2\ln(N)}{\tau}}{S(s)} \, \di s = - \frac{\tau\Delta(x)-2\ln(N)}{4c\pi}, \quad a_0 = - \frac{1}{2 \pi } \int_{\gamma} \frac{\Delta(x) - \frac{2\ln(N)}{\tau}}{S(s)} \, s \, \di s.
    \end{equation}

Considering all the above, one can deduce the RHP for the function 
\begin{equation}
\Xi(z) = \frac{1}{\xi (z)} e^{-i \nu_0 \sigma_3} \Lambda(z) e^{-i \nu (z) \sigma_3}.
\end{equation}

\begin{RHP}
\label{RHP: RHP for Xi(z)} Find a square matrix function $\Xi(z)$ analytic in $\mathbb{C}\setminus \{\eta_3 \cup \eta_4\}$ such that
\begin{enumerate}
    \item $\Xi_{+}(z) = \Xi_{-}(z) \begin{pmatrix}
        0 & 1\\
        1 & 0
    \end{pmatrix}, \quad z \in \eta_3 \cup \eta_4$
    \item $\Xi(z) = \left[ I + \mathcal{O}(1/z) \right] e^{-i (a_1 z + a_0 + \nu_0) \sigma_3}, \quad z \to \infty$
    \item $\Xi$ has $\frac{1}{2}$-root singularities (each entry of $\Xi$) at $z=\wo{A},-\wo{A}$.
\end{enumerate}
\end{RHP}
The goal is to find an explicit representation for $\Xi$, which yields an explicit representation for $\Lambda$, since the functions $\xi$ and $\nu$ are known.  Consider now the double-sheeted Riemann surface $\cR$ which consists of two copies of the complex plane, $S_{I}$ and $S_{II}$, which are glued together along the two branch cuts $\eta_3$ and $\eta_4$. Every point $z \in \mathbb{C}$ has an image in $S_{I}$ via some projection map $P_I(z)$, and it has an image in $S_{II}$ via some projection map $P_{II}(z)$. Conversely, any point $p \in S_{I}$ is the image of some point $z \in \mathbb{C}$ through the map $P_{I}(z)$, and there exists another point $p' \in S_{II}$ which is the image of the same point $z$ in $\mathbb{C}$ through $P_{II}(z)$. We introduce a canonical homology basis with the $\alpha$-cycle encircling $\eta_3$ counterclockwise on $S_{I}$, and the $\beta$-cycle going from $\eta_3$ to $\eta_4$ on $S_{I}$ and coming back to $\eta_3$ on $S_{II}$. For more details, see Fig. \ref{fig:riemann_surface}.

\textbf{Notational convention:}  To avoid a flurry of symbols, we will often use $f(z)$ instead of $f(P_{I}(z))$, or $f(P_{II}(z))$, where needed.  Readers beware!
 
\begin{figure}
    \centering

\tikzset{every picture/.style={line width=0.75pt}}     

\begin{tikzpicture}[x=0.6pt,y=0.6pt,yscale=-1,xscale=1]

\draw   (112.4,12.33) -- (344.33,12.33) -- (244.93,160.33) -- (13,160.33) -- cycle ;

\draw   (106.47,222) -- (318.33,222) -- (227.53,367.33) -- (15.67,367.33) -- cycle ;

\draw   (468.47,108.67) -- (680.33,108.67) -- (589.53,254) -- (377.67,254) -- cycle ;
 
\draw    (479,125) -- (379.67,103.67) -- (258.28,74.8) ;
\draw [shift={(256.33,74.33)}, rotate = 13.38] [color={rgb, 255:red, 0; green, 0; blue, 0 }  ][line width=0.75]    (10.93,-3.29) .. controls (6.95,-1.4) and (3.31,-0.3) .. (0,0) .. controls (3.31,0.3) and (6.95,1.4) .. (10.93,3.29)   ;

\draw    (466.33,237) -- (360.33,263) -- (244.94,291.19) ;
\draw [shift={(243,291.67)}, rotate = 346.27] [color={rgb, 255:red, 0; green, 0; blue, 0 }  ][line width=0.75]    (10.93,-3.29) .. controls (6.95,-1.4) and (3.31,-0.3) .. (0,0) .. controls (3.31,0.3) and (6.95,1.4) .. (10.93,3.29)   ;

\draw [color={rgb, 255:red, 74; green, 144; blue, 226 }  ,draw opacity=1 ][line width=1.5]    (167,131.5) -- (192,33) ;
\draw [shift={(177.46,90.29)}, rotate = 284.24] [fill={rgb, 255:red, 74; green, 144; blue, 226 }  ,fill opacity=1 ][line width=0.08]  [draw opacity=0] (11.61,-5.58) -- (0,0) -- (11.61,5.58) -- cycle    ;

\draw    (134.5,52.5) .. controls (174.5,22.5) and (219,34) .. (234.5,52.5) ;

\draw    (124.33,107.5) .. controls (129,138.5) and (220.62,141.83) .. (233,114.5) ;

\draw [color={rgb, 255:red, 74; green, 144; blue, 226 }  ,draw opacity=1 ][line width=1.5]    (150.5,338) -- (175.5,239.5) ;
\draw [shift={(164.67,282.16)}, rotate = 104.24] [fill={rgb, 255:red, 74; green, 144; blue, 226 }  ,fill opacity=1 ][line width=0.08]  [draw opacity=0] (11.61,-5.58) -- (0,0) -- (11.61,5.58) -- cycle    ;

\draw    (118,259) .. controls (158,229) and (202.5,240.5) .. (218,259) ;

\draw    (107.83,314) .. controls (112.5,345) and (204.12,348.33) .. (216.5,321) ;

\draw  [draw opacity=0][line width=1.5]  (194.43,24.23) .. controls (223.89,25.79) and (246.5,34.94) .. (246.5,46) .. controls (246.5,58.15) and (219.19,68) .. (185.5,68) .. controls (151.81,68) and (124.5,58.15) .. (124.5,46) .. controls (124.5,33.85) and (151.81,24) .. (185.5,24) .. controls (188.46,24) and (191.37,24.08) .. (194.21,24.22) -- (185.5,46) -- cycle ; \draw [color={rgb, 255:red, 208; green, 2; blue, 27 }  ,draw opacity=1 ][line width=1.5]    (194.43,24.23) .. controls (223.89,25.79) and (246.5,34.94) .. (246.5,46) .. controls (246.5,58.15) and (219.19,68) .. (185.5,68) .. controls (151.81,68) and (124.5,58.15) .. (124.5,46) .. controls (124.5,33.85) and (151.81,24) .. (185.5,24) .. controls (189.16,24) and (192.75,24.12) .. (196.23,24.34) ; \draw [shift={(191.24,24.1)}, rotate = 0.93] [fill={rgb, 255:red, 208; green, 2; blue, 27 }  ,fill opacity=1 ][line width=0.08]  [draw opacity=0] (11.61,-5.58) -- (0,0) -- (11.61,5.58) -- cycle    ; 
 
\draw    (482.83,141.75) .. controls (522.83,111.75) and (567.33,123.25) .. (582.83,141.75) ;

\draw    (472.67,196.75) .. controls (477.33,227.75) and (568.95,231.08) .. (581.33,203.75) ;

\draw (236.5,55.9) node [anchor=north west][inner sep=0.75pt]    {$A$};

\draw (234.95,117.43) node [anchor=north west][inner sep=0.75pt]  [rotate=-0.76]  {$\overline{A}$};

\draw (132.46,55.87) node [anchor=north east] [inner sep=0.75pt]  [rotate=-0.74]  {$-\overline{A}$};

\draw (122.29,110.37) node [anchor=north east] [inner sep=0.75pt]  [rotate=-0.76]  {$-A$};

\draw (68,15.07) node [anchor=north west][inner sep=0.75pt]    {$S_{I}$};

\draw (60.67,218.07) node [anchor=north west][inner sep=0.75pt]    {$S_{II}$};

\draw (641.33,82.07) node [anchor=north west][inner sep=0.75pt]    {$\mathbb{C}$};

\draw (379.67,100.27) node [anchor=south] [inner sep=0.75pt]    {$P_{I}( z)$};

\draw (360.33,266.4) node [anchor=north] [inner sep=0.75pt]    {$P_{II}( z)$};

\draw (212.88,69.14) node [anchor=north west][inner sep=0.75pt]    {$\alpha $};

\draw (178.91,95.69) node [anchor=north west][inner sep=0.75pt]  [rotate=-0.74]  {$\beta $};

\draw (220,262.4) node [anchor=north west][inner sep=0.75pt]    {$A$};

\draw (218.45,323.93) node [anchor=north west][inner sep=0.75pt]  [rotate=-0.76]  {$\overline{A}$};

\draw (115.96,262.37) node [anchor=north east] [inner sep=0.75pt]  [rotate=-0.74]  {$-\overline{A}$};

\draw (105.79,316.87) node [anchor=north east] [inner sep=0.75pt]  [rotate=-0.76]  {$-A$};

\draw (162.41,302.19) node [anchor=north west][inner sep=0.75pt]  [rotate=-0.74]  {$\beta $};

\draw (584.83,145.15) node [anchor=north west][inner sep=0.75pt]    {$A$};

\draw (583.29,206.68) node [anchor=north west][inner sep=0.75pt]  [rotate=-0.76]  {$\overline{A}$};

\draw (480.79,145.12) node [anchor=north east] [inner sep=0.75pt]  [rotate=-0.74]  {$-\overline{A}$};

\draw (470.62,199.62) node [anchor=north east] [inner sep=0.75pt]  [rotate=-0.76]  {$-A$};

\end{tikzpicture}

    \caption{2-sheeted Riemann Surface $\cR$}
    \label{fig:riemann_surface}
\end{figure}

Let $p$ be a point on $\cR$. We seek a column vector function $W$ on the Riemann surface defined as
\begin{equation}
W(p) = \begin{pmatrix}
w_1 (p)\\
w_2(p)
\end{pmatrix}
\end{equation}
which is used to express the matrix-valued function $\Xi$ as follows:
\begin{equation}\label{e:eqS}
\Xi(z) = \Big( W(P_{I}(z)) \,,\, W(P_{II}(z)) \Big).
\end{equation}
One can check that \eqref{e:eqS} is consistent with the jump condition that $\Xi$ satisfies. Note that since $\Xi$ has $\frac{1}{2}$-root singularities at the points $\wo{A}, -\wo{A}$ in the $z$-plane, the function $W$ has simple poles at the points $\wo{A}, -\wo{A}$ on $\cR$. Moreover, from the large-$z$ asymptotics of the function $\Xi$, we deduce that
\begin{align}
\label{eq:W_conditions}
   & W(p) = \left[ \begin{pmatrix}
        1\\
        0
    \end{pmatrix} + \mathcal{O}(1/p)\right] e^{-i ( a_1 p + a_0 + \nu_0)}, \quad p \to \infty_{1}\\
     &W(p) = \left[ \begin{pmatrix}
        0\\
        1
    \end{pmatrix} + \mathcal{O}(1/p)\right] e^{i ( a_1 p + a_0 + \nu_0)}, \quad p \to \infty_{2} 
\end{align}
where $\infty_{1}$ and $\infty_{2}$ are the images of $\infty$ in $S_{I}$ and $S_{II}$, respectively. 
Following \cite{Ken_NLS}, we solve RHP \ref{RHP: RHP for Xi(z)} in terms of Theta functions.

To this end, we define the function $\rho$ as follows
\begin{equation}
    \rho(p) = \begin{cases}
        S(p), \quad & p \in S_{I}\\
        -S(p), \quad & p \in S_{II}
    \end{cases}
\end{equation}
\\[4pt]
Now, consider the integral $\oint_{\alpha} \frac{c}{\rho(s)} \, \di s$. One can check that for $c$ as in \eqref{eq:c_tau_def}, it follows that $\oint_{\alpha} \frac{c}{\rho(s)} \,\di s = 1$. Furthermore, we can express $\tau$ defined in \eqref{eq:c_tau_def} as an integral on the $\beta$-cycle as 
\begin{equation}
    \tau = \oint_{\beta} \frac{c}{\rho(s)} \, \di s\,,
\end{equation}
and we recall that it is positive imaginary, see Remark \ref{rem:c_tau_rem}.
Next, we fix the asymptotic behavior of $W(p)$ by introducing the function $\delta(p)$ on $\cR$ as
\begin{equation}\label{e:eqdelta}
\delta (p) = \int_{A}^{p} \frac{\wt a_2 s^2 + \wt a_1 s + \wt a_0}{\rho (s)} \, \di s
\end{equation}
where $\wt a_2, \wt a_1, \wt a_0$ have to be determined. Below, we present a proposition that is useful for understanding the analytic properties and the asymptotic behavior of $\delta(p)$.

\begin{Proposition}
\label{prop:asymptotic_for_delta}
Consider the Riemann surface $\cR$ previously defined and let $\delta(p)$ be as in \eqref{e:eqdelta}. Setting
\begin{equation}
\begin{split}
\wt a_0 = -2c a_1  \int_A^{-\wo A} \frac{s^2}{S_+(s)}\di s \,, \quad \wt a_1 = 0\,, \quad \wt a_2 = a_1\,, \\ \nu_0 =\frac{a_1}{2}(1-\tau)\int_A^{-\wo A} \frac{s^2}{S_+(s)}\di s - a_1 A - a_0 + a_1 \int_A^\infty \frac{s^2 - S(s)}{S(s)}\di s\,,
\end{split}
\end{equation}
then $\delta(p)$ satisfies the asymptotic behavior
\begin{equation}
\delta(p) = a_1 p + a_0 + \nu_0 + \mathcal{O} \left( \frac{1}{p} \right), \ \quad p\to\infty_1 
\end{equation}
where $a_1, a_0$ are given in equation \eqref{eq:def_a1_a0}, and $\nu_0$ appears in the asymptotic behavior of $W(p)$ in \eqref{eq:W_conditions}.  Then, $\delta(z)$ is analytic in $\mathbb{C} \setminus\{\eta_3\cup\eta_4\}$ (recalling our slight abuse of notation) and satisfies the following conditions 

\begin{align}
&\delta_{+}(z) + \delta_{-}(z) = 0,  & z \in \eta_3\\
&\delta_{+}(z) + \delta_{-}(z) = \zeta(x,N), & z \in \eta_4
\end{align}

where 

\begin{equation}
    \label{eq:zeta_def_proof}
    \zeta(x,N) = i\left( 4c\pi x + 2\ln(N) + 4c \Re\left( \int_{\eta_1} \frac{\log(2\pi h(s)\rho(s))}{R_+(s)}\di s\right)\right) \in i\R.
\end{equation}

\end{Proposition}

\begin{proof}
    
Let $p \in S_{I}$, then 
\begin{equation}
\rho(p) \equiv S(p) \sim p^2\left( 1 + \mathcal{O}\left(\frac{1}{p^{2}}\right)\right) , \quad p \to \infty_{1}
\end{equation}
Therefore
\begin{align*}
 \frac{\wt a_2 p^2 + \wt a_1 p + \wt a_0}{\rho (p)} &= \frac{\wt a_2 p^2 + \wt a_1 p + \wt a_0}{p^2\left( 1 + \mathcal{O}\left(\frac{1}{p^{2}}\right)\right)}\\
 &= \left( \wt a_2 + \frac{\wt a_1}{p} + \frac{\wt a_0}{p^2} \right)\left( 1 + \mathcal{O}\left(\frac{1}{p^{2}}\right)\right)\\
 &= \wt a_2 + \frac{\wt a_1}{p} +  \mathcal{O}\left(\frac{1}{p^{2}}\right).
\end{align*}
We can then write
\begin{align*}
\int_{A}^{p} \frac{\wt a_2 s^2 + \wt a_1 s + \wt a_0}{\rho (s)} \,\di s &= \int_{A}^{p} \left( \frac{\wt a_2 s^2 + \wt a_1 s + \wt a_0}{\rho (s)} - \wt a_2 - \frac{\wt a_1}{s} \right) \, \di s + \int_{A}^{p} \left( \wt a_2 + \frac{\wt a_1}{s} \right) \, \di s \\
&= \int_{A}^{p} \left( \frac{\wt a_2 s^2 + \wt a_1 s + \wt a_0}{\rho (s)} - \wt a_2 - \frac{\wt a_1}{s} \right) \, \di s  + \left( \wt a_2 p - \wt a_2 A + \wt a_1 \ln(p) - \wt a_1 \ln(A) \right).
\end{align*}
To match the asymptotics of $W(p)$ as $p \to \infty_{1}$ in equation \eqref{eq:W_conditions}, we require 
\begin{equation}
    \wt a_2 = a_1\,, \qquad \wt a_1 =0\,, \qquad \nu_0 = \frac{\tau - 1}{4c}\wt a_0 - a_1 A  - a_0 +  a_1 \int_A^\infty \frac{s^2 - S(s)}{S(s)}\di s\,,
\end{equation}
where we used Lemma \ref{lem:immaginary_part} to compute $\int_A^\infty \frac{1}{S(s)}\di s$. Additionally, if $p \in \gamma$, then
\begin{equation}
\delta_{+}(p) - \delta_{-}(p) = 2 \int_{A}^{- \wo{A}} \frac{a_1 s^2 + \wt a_0}{S_+(s)} \di s
\end{equation}
which we require to be zero, so $\delta(p)$ is analytic in $\gamma$. This fixes $\wt a_0$ as
\begin{equation}
    \wt a_0 = -2c a_1  \int_A^{-\wo A} \frac{s^2}{S_+(s)}\di s\,. 
\end{equation}
Plugging $\wt a_0$ into the previous expression for $\nu_0$, we get
\begin{equation}
    \nu_0 = a_1(1-\tau)\int_A^{-\wo A} \frac{s^2}{S_+(s)}\di s - a_1 A - a_0 + a_1 \int_A^\infty \frac{s^2 - S(s)}{S(s)}\di s\,.
\end{equation}

Finally, a straightforward computation shows that 

\begin{align}
&\delta_{+}(p) + \delta_{-}(p) = 0, & p \in \eta_3\\
&\delta_{+}(p) + \delta_{-}(p) = 2 \int_{-\wo A}^{-A} \frac{a_1 s^2 + \wt a_0}{S(s)} \di s, & p \in \eta_4
\end{align}
where we can rewrite the second jump relation (on $\eta_4$) as follows
    \begin{equation}
    \begin{split}
             \delta_+(p) + \delta_-(p) &= 2 a_1 \left( \int_{-\wo A}^{-A} \frac{s^2}{S(s)}\di s - \tau 
             \int_{\eta_3} \frac{s^2}{S_+(s)}\di s\right) \\ 
             & = - \frac{\tau\Delta(x)-2\ln(N)}{2c\pi}\left( \int_{-\wo A}^{-A} \frac{s^2}{S(s)}\di s - \tau \int_{\eta_3} \frac{s^2}{S_+(s)}\di s\right)\\
             & =- \frac{\tau\Delta(x)-2\ln(N)}{2c\pi}\left( \int_{-\wo A}^{-A} \frac{s^2}{R(s)}\di s - \tau \int_{\eta_1} \frac{s^2}{R_+(s)}\di s\right) \,.
    \end{split}
    \end{equation}
We used Lemma \ref{lem:immaginary_part} to simplify the expression. Applying the Riemann bilinear relation \cite{lawden2013elliptic} one can rewrite the previous jumps as 

\begin{equation}
\begin{split}
    \delta_+(p) + \delta_-(p) &:= - \frac{\tau\Delta(x)-2\ln(N)}{2c\pi}\left( \int_{-\wo A}^{-A} \frac{s^2}{R(s)}\di s - \tau \int_{\eta_1} \frac{s^2}{R_+(s)}\di s\right)\\
    &= i\left( 4c\pi x + 2\ln(N) + 4c \Re\left( \int_{\eta_1} \frac{\log(2\pi h(s)\rho(s))}{R_+(s)}\di s\right)\right)\,=\zeta(x,N)\,,
\end{split}
\end{equation}
from which it follows that $\zeta\in i\R$. This concludes the proof of the proposition.

\end{proof}

\subsection{Solution on the Riemann surface}
Now we can explicitly find $W(p)$ satisfying the conditions \eqref{eq:W_conditions}. To this end, we introduce the Abel map related to $\cR$ as follows  
\begin{equation}
\label{eq:abel_map}
    \mathcal{A}(p) = \int_{A}^{p} \frac{c}{\rho (s)} \, \di s.
\end{equation}
 This is typically multi-valued, and if one wants to create a single-valued function, one may restrict the path of integration to avoid $P_{I}(\gamma) \cup P_{II}(\gamma)\cup P_{I}([\wo{A},\infty]) \cup P_{II}([\wo{A},\infty])$.  

Next, we consider the Riemann Theta function  \cite[21]{dlmf}
\begin{equation}
    \Theta(z;\tau) = \sum_{n \in \mathbb{Z}}  e^{2 \pi i n z + \pi n^2 i \tau}, \quad z \in \mathbb{C}
\end{equation}
which is an even function of $z$ satisfying the following properties
\begin{align}
    &\Theta(z+1;\tau) = \Theta(z;\tau) \\
    &\Theta(z+\tau;\tau) = e^{-\pi i \tau - 2 \pi i z} \Theta(z;\tau)\\
    &\Theta(z + n + m \tau;\tau) = e^{- \pi i m^2 \tau - 2 \pi i m z} \Theta(z;\tau), \quad n, m \in \mathbb{Z}
\end{align}
and has a simple zero at $z=\frac{\tau + 1}{2}$. We notice that the Riemann Theta function is related to the Jacobi $\theta_3$ as $\Theta(z;\tau)=\theta_3(\pi z|\tau)$. We use the $\Theta$-function to construct $W(p)$ for $p \in \cR$, and then through \eqref{e:eqS} we find $\Xi(z)$ solving RHP \ref{RHP: RHP for Xi(z)}.  Recall that the function $W(p)$ has simple poles at the points $\wo{A},-\wo{A}$ and that two entries of $W(p)$ vanish at $\infty_1$ and $\infty_2$. 

Define the functions
\begin{align}
    & w_1(p) = \frac{1}{Z_1}\frac{\Theta \left( \mathcal{A}(p) - \mathcal{A}(\infty_2) + \frac{\tau + 1}{2}; \tau \right)\Theta \left( \mathcal{A}(p) - \mathcal{A}(Q_1) + \frac{\tau + 1}{2}; \tau \right)}{\Theta \left( \mathcal{A}(p) - \mathcal{A}(\wo{A}) + \frac{\tau + 1}{2}; \tau \right)\Theta \left( \mathcal{A}(p) - \mathcal{A}(-\wo{A}) + \frac{\tau + 1}{2}; \tau \right)} e^{-i \delta(p)}\,,\\
    & w_2(p) = \frac{1}{Z_2} \frac{\Theta \left( \mathcal{A}(p) - \mathcal{A}(\infty_1) + \frac{\tau + 1}{2}; \tau \right)\Theta \left( \mathcal{A}(p) - \mathcal{A}(Q_2) + \frac{\tau + 1}{2}; \tau \right)}{\Theta \left( \mathcal{A}(p) - \mathcal{A}(\wo{A}) + \frac{\tau + 1}{2}; \tau \right)\Theta \left( \mathcal{A}(p) - \mathcal{A}(-\wo{A}) + \frac{\tau + 1}{2}; \tau \right)} e^{-i \delta(p)}\,,
\end{align}
where

\begin{equation}
    \label{eq:normalizations}
    \begin{split}
        &Z_1 = \frac{\Theta \left( \mathcal{A}(\infty_1) - \mathcal{A}(\infty_2) + \frac{\tau + 1}{2}; \tau \right)\Theta \left( \mathcal{A}(\infty_1)) - \mathcal{A}(Q_1) + \frac{\tau + 1}{2}; \tau \right)}{\Theta \left( \mathcal{A}(\infty_1)) - \mathcal{A}(\wo{A}) + \frac{\tau + 1}{2}; \tau \right)\Theta \left( \mathcal{A}(\infty_1)) - \mathcal{A}(-\wo{A}) + \frac{\tau + 1}{2}; \tau \right)}\\
        &Z_2 = \frac{\Theta \left( \mathcal{A}(\infty_2) - \mathcal{A}(\infty_1) + \frac{\tau + 1}{2}; \tau \right)\Theta \left( \mathcal{A}(\infty_2) - \mathcal{A}(Q_2) + \frac{\tau + 1}{2}; \tau \right)}{\Theta \left( \mathcal{A}(\infty_2) - \mathcal{A}(\wo{A}) + \frac{\tau + 1}{2}; \tau \right)\Theta \left( \mathcal{A}(\infty_2) - \mathcal{A}(-\wo{A}) + \frac{\tau + 1}{2}; \tau \right)}\,.
    \end{split}
\end{equation}
We notice that defining $W(p) = (w_1(p), w_2(p))^\intercal$, it has the right asymptotic behavior for $p \to \infty$, but, in general, it is not single-valued on the Riemann surface.  Indeed, with the notational convention that $p + \beta$ means adding a $\beta$-cycle, we have

\begin{equation}
\label{eq:sol_rs}
\begin{split}
    & w_1(p+\beta) = \exp\left( 2\pi i \left( \cA(\infty_2) + \cA(Q_1) - \cA(\wo A) - \cA(-\wo A) -  \frac{\zeta(x,N)}{2\pi}  \right) \right)w_1(p)\,, \\
    & w_2(p+\beta) = \exp\left( 2\pi i \left( \cA(\infty_1) + \cA(Q_2) - \cA(\wo A) - \cA(-\wo A) -  \frac{\zeta(x,N)}{2\pi}  \right) \right)w_1(p)\,.
    \end{split}
\end{equation}
Therefore, to have $W(p)$ single valued on the Riemann surface, we must set
\begin{equation}
    \begin{split}
        &\cA(\infty_2) + \cA(Q_1) - \cA(\wo A) - \cA(-\wo A) - \frac{\zeta(x,N)}{2\pi}  = 0 \,,\\
        & \cA(\infty_1) + \cA(Q_2) - \cA(\wo A) - \cA(-\wo A) - \frac{\zeta(x,N)}{2\pi} = 0\,,
    \end{split}
\end{equation}
which implies that

\begin{align}
    & \cA(Q_1) = \cA(\wo A) + \cA(-\wo A) + \frac{\zeta(x,N)}{2\pi} - \cA(\infty_2)\,, \\
    & \cA(Q_2) = \cA(\wo A) + \cA(-\wo A) +  \frac{\zeta(x,N)}{2\pi} - \cA(\infty_1)\,.
\end{align}
Note that this determines the zeros $Q_1, Q_2$ of $w_1(p;\tau)$ and $w_2(p;\tau)$, respectively, although the path of integration from $A$ to $Q_{1}$ or $Q_{2}$ will necessarily involve many traverses of the cycle $\beta$, since $\zeta(x,N)$ is growing in $N$. We can further simplify the expression for $W(p)$ applying the following proposition, which is a Corollary of \eqref{eq:IntegralEval}.

\begin{Proposition}
\label{prop:abel_map_values}
    Consider the Riemann surface $\cR$, and define the Abel map $\cA(p)$ \eqref{eq:abel_map}; then

    \begin{equation}
        \cA(\wo A) = \frac{\tau}{2}\,, \quad \cA(-\wo A)= \frac{1}{2}\,,\quad \cA(\infty_1)= - \cA(\infty_2) =  \frac{\tau - 1}{4}\,.
    \end{equation}
\end{Proposition}
Therefore, we can rewrite the solution $W(p)$ as

\begin{equation}
    W(p) = \begin{pmatrix}
        w_1(p) \\
        w_2(p)
    \end{pmatrix} = \begin{pmatrix}
        \frac{1}{Z_1}\frac{\Theta \left( \mathcal{A}(p) + \frac{1+3\tau}{4} ; \tau \right)\Theta \left( \mathcal{A}(p) -  \frac{\zeta(x,N)}{2\pi} + \frac{1-\tau}{4}; \tau \right)}{\Theta \left( \mathcal{A}(p) +  \frac{ 1}{2}; \tau \right)\Theta \left( \mathcal{A}(p) + \frac{\tau}{2}; \tau \right)} e^{-i \delta(p)}\\
        \frac{1}{Z_2} \frac{\Theta \left( \mathcal{A}(p) + \frac{3+\tau}{4}; \tau \right)\Theta \left( \mathcal{A}(p) -  \frac{\zeta(x,N)}{2\pi} + \frac{\tau - 1}{4}; \tau \right)}{\Theta \left( \mathcal{A}(p) + \frac{ 1}{2}; \tau \right)\Theta \left( \mathcal{A}(p)  + \frac{\tau}{2}; \tau \right)} e^{-i \delta(p)}
    \end{pmatrix}\,.
\end{equation}
 \subsection{Solution on the Complex plane}
 We now have to project the solution from the Riemann surface $\cR$ to the complex plane. Consider the two projections $P_I(z),P_{II}(z)$ defined from the complex plane to the first and the second sheet of $\cR$ respectively, see Fig. \ref{fig:riemann_surface}. Then, one can rewrite the solution of the RHP \ref{RHP: RHP for Xi(z)} as

\begin{equation}
\label{eq:sol_complex_plane}
    \Xi(z) = \begin{pmatrix}
        w_1(P_I(z)) & w_1(P_{II}(z)) \\
        w_2(P_I(z)) & w_2(P_{II}(z)) \\
    \end{pmatrix}=: \begin{pmatrix}
        s_{11}(z;x,t) & s_{12}(z;x,t) \\
        s_{21}(z;x,t) & s_{22}(z;x,t)
    \end{pmatrix}\,,
\end{equation}
from the definition of $w_1(p),w_2(p)$ \eqref{eq:sol_rs} it is easy to see that the previously defined function solves RHP \ref{RHP: RHP for Xi(z)}. 

For future use, we write the previous solution explicitly,  making use of our previously forewarned notational abuse in which  $\cA(z)$ and $\delta(z)$ are considered to be evaluated on the complex plane, or, equivalently, on the first Riemann sheet.

\begin{equation}
\label{eq:solution_ext}
\begin{split}
        &s_{11}(z;x,t) = \frac{1}{Z_1}\frac{\Theta \left( \mathcal{A}(z) + \frac{1+3\tau}{4} ; \tau \right)\Theta \left( \mathcal{A}(z) -  \frac{\zeta(x,N)}{2\pi} + \frac{1-\tau}{4}; \tau \right)}{\Theta \left( \mathcal{A}(z) +  \frac{ 1}{2}; \tau \right)\Theta \left( \mathcal{A}(z) + \frac{\tau}{2}; \tau \right)} e^{-i \delta(z)} \\
    &s_{12}(z;x,t) = \frac{1}{Z_1}\frac{\Theta \left( -\mathcal{A}(z) + \frac{1+3\tau}{4} ; \tau \right)\Theta \left( -\mathcal{A}(z) -  \frac{\zeta(x,N)}{2\pi} + \frac{1-\tau}{4}; \tau \right)}{\Theta \left( -\mathcal{A}(z) +  \frac{ 1}{2}; \tau \right)\Theta \left( -\mathcal{A}(z) + \frac{\tau}{2}; \tau \right)} e^{i \delta(z)} \\
    & s_{21}(z;x,t) = \frac{1}{Z_2} \frac{\Theta \left( \mathcal{A}(z) + \frac{3+\tau}{4}; \tau \right)\Theta \left( \mathcal{A}(z) -  \frac{\zeta(x,N)}{2\pi} + \frac{\tau - 1}{4}; \tau \right)}{\Theta \left( \mathcal{A}(z) + \frac{ 1}{2}; \tau \right)\Theta \left( \mathcal{A}(z)  + \frac{\tau}{2}; \tau \right)} e^{-i \delta(z)}\\
  & s_{22}(z;x,t) = \frac{1}{Z_2} \frac{\Theta \left( -\mathcal{A}(z) + \frac{3+\tau}{4}; \tau \right)\Theta \left( -\mathcal{A}(z) -  \frac{\zeta(x,N)}{2\pi} + \frac{\tau - 1}{4}; \tau \right)}{\Theta \left( -\mathcal{A}(z) + \frac{ 1}{2}; \tau \right)\Theta \left( -\mathcal{A}(z)  + \frac{\tau}{2}; \tau \right)} e^{i \delta(z)}.
\end{split}
\end{equation}

\section{Local parametrices}
\label{sec:local}

In the previous section, we developed a solution to RHP \ref{RHP:model_problem} which serves as an outer approximation to the solution of RHP \ref{rhp:small_norm} when we are not too close to the four branching points $A,\wo{A},-A,-\wo A$. However, in the vicinity of these points, this approximation is not expected to be close to the solution of the full Riemann-Hilbert problem (since the jump matrices are not close to $I$ there). Therefore, we must construct a solution in a small neighborhood around each of these four points to better approximate the solution to RHP \ref{rhp:small_norm} locally. The global solution to RHP \ref{rhp:small_norm} will then be given in terms of the error function that describes the difference between the global solution and the approximate solutions, inside and outside the four neighborhoods, which as we will see in this section, exists since it satisfies a small norm RHP.

In this section, we construct the local parametrices following \cite{Kuijlaars-2004}. We chose to present the explicit construction of the parametrix near $z=-A$ as it is the most delicate case and requires additional techniques. For the points $z= \wo A$ and $z=A$, we verify that the proposed constructions satisfy the required jump conditions. The local parametrix near $z=-\wo A$ follows analogously and has therefore been omitted.

\subsection{Local parametrix near \texorpdfstring{$z= - A$}{-A}}

Recall from proposition \ref{prop:small_norm} that $\mathcal{U}_{-A}$ is a small neighborhood of $z=-A$. Let us denote by $E_{-A}(z)$ a local parametrix near $z=-A$: $E_{-A}(z)$ should satisfy all the Riemann-Hilbert jump relations satisfied by $E(z)$ inside the neighborhood $\mathcal{U}_{-A}$, and it should also behave on the boundary of $\mathcal{U}_{-A}$ so as to asymptotically match the global parametrix.  

Inside $\mathcal{U}_{-A}$, we seek $E_{-A}(z)$ satisfying the Riemann-Hilbert jump relations:
\begin{gather}\label{eq:eqEminA}
E_{-A +}(z) = E_{-A -}(z) J_{E_{-A}}(z), \quad J_{E_{-A}}(z) = \begin{cases}
\begin{pmatrix}
0 & i\\
i & 0
\end{pmatrix}, & \quad z \in \eta_4 \\\\[2pt]
e^{\left( -\Delta(x) + \frac{2}{\tau} \ln(N) \right) \sigma_3}, & \quad z \in \gamma\\\\[2pt]
\mathfrak{N}^{+}(z), & \quad z \in \omega_1 \cup \omega_2
\end{cases}
\end{gather}

Following \cite{Kuijlaars-2004}, we construct $E_{-A}$  in terms of special functions. For this purpose, we define the function $\Psi(\xi)$ as follows:
\begin{gather}\label{eq:eq233}
\Psi(z) = \begin{cases}
\begin{pmatrix}
I_{0}(2 \xi^{1/2}) & \frac{i}{\pi} K_{0}(2 \xi^{1/2}) \\\\[1pt]
2 \pi i \xi^{1/2} I_{0}'(2 \xi^{1/2}) & -2 \xi^{1/2} K_{0}'(2 \xi^{1/2})
\end{pmatrix}, \quad & \arg(\xi) \in (-2\pi/3,2\pi/3) \\\\[1pt]
\begin{pmatrix}
\frac{1}{2} H_{0}^{(1)}(2(-\xi)^{1/2}) & \frac{1}{2} H_{0}^{(2)}(2(-\xi)^{1/2}) \\\\[1pt]
\pi \xi^{1/2} (H_{0}^{(1)})'(2(-\xi)^{1/2}) & \pi \xi^{1/2} (H_{0}^{(2)})'(2(-\xi)^{1/2})
\end{pmatrix}, \quad & \arg(\xi) \in (2\pi/3,\pi) \\\\[1pt]
\begin{pmatrix}
\frac{1}{2} H_{0}^{(2)}(2(-\xi)^{1/2}) & -\frac{1}{2} H_{0}^{(1)}(2(-\xi)^{1/2}) \\\\[1pt]
-\pi \xi^{1/2} (H_{0}^{(2)})'(2(-\xi)^{1/2}) & \pi \xi^{1/2} (H_{0}^{(1)})'(2(-\xi)^{1/2})
\end{pmatrix}, \quad & \arg(\xi) \in (-\pi,-2\pi/3)
\end{cases}
\end{gather}
    where $H_{0}^{(1)}$ and $H_{0}^{(2)}$ are the Hankel functions and $K_{0}$ and $I_{0}$ are the modified Bessel functions, see \cite[Chapter 10]{dlmf}. In \cite{Kuijlaars-2004}, it is shown that $\Psi$ satisfies the jumps given in Fig. \ref{fig Contour Psi(xi)}. For brevity, we introduce the notation $\mathcal{Y}(z) = 2 \pi h(z) \rho(z) e^{-2i \theta(z)}$. 
\begin{figure}[ht]
    \centering
    \begin{tikzpicture}[
        myarrow/.style={
            thick,
            postaction={decorate,
            decoration={markings,mark=at position #1 with {\arrow[scale=1.5]{stealth}}}}
        },
        every matrix/.style={scale=0.4} 
    ]
        \draw[myarrow=0.5]  (-2.5,-2.5) -- (0,0);
        \draw[myarrow=0.5]  (-3.5,0) -- (0,0);
        \draw[myarrow=0.5]  (-2.5,2.5) -- (0,0);

        \matrix (m1) at (-0.5,2) [matrix of math nodes, left delimiter=(, right delimiter=), inner sep=0pt, column sep=10pt] {
            1 & 0\\ \\[2pt]
            1 & 1 \\
        };

        \matrix (m2) at (-2.5,0.5) [matrix of math nodes, left delimiter=(, right delimiter=), inner sep=0pt, column sep=10pt] {
            0 & 1\\ \\[2pt]
            -1 & 0 \\
        };
         \matrix (m3) at (-0.5,-2) [matrix of math nodes, left delimiter=(, right delimiter=), inner sep=0pt, column sep=10pt] {
            1 & 0\\ \\[2pt]
            1 & 1 \\
        };        
        \node (eq) at (0,-0.3) {$0$};
    \end{tikzpicture}
    \caption{Jumps for $\Psi(\xi)$}
    \label{fig Contour Psi(xi)}
\end{figure}

We will arrive at precisely this collection of jump relations through explicit transformations, starting with $E_{-A}(z)$.

We introduce the function $m$:
\begin{gather}\label{eq:eq217}
m(z) = \begin{cases}
E_{-A}(z) \, e^{\left( \widetilde{y}(z) - \frac{1}{\tau} \ln(N) \right) \sigma_3}, \quad & z \in \Omega\\\\[2pt]
E_{-A}(z) \, e^{\left( \widetilde{y}(z) + \frac{1}{\tau} \ln(N) \right) \sigma_3}, \quad & z \in \widetilde{\Omega}.
\end{cases}
\end{gather}

\begin{figure}
    \centering

\tikzset{every picture/.style={line width=0.75pt}}       

\begin{tikzpicture}[x=0.5pt,y=0.5pt,yscale=-1,xscale=1]

\draw [color={rgb, 255:red, 208; green, 2; blue, 27 }  ,draw opacity=1 ][line width=2.25]    (208.6,162.69) .. controls (223.25,166.25) and (269,174.4) .. (299.4,181.6) .. controls (329.8,188.8) and (351,189.75) .. (402.04,190.48) .. controls (453.07,191.21) and (482.5,188.3) .. (513,180.8) ;
\draw [shift={(262.77,173.71)}, rotate = 191.37] [fill={rgb, 255:red, 208; green, 2; blue, 27 }  ,fill opacity=1 ][line width=0.08]  [draw opacity=0] (14.29,-6.86) -- (0,0) -- (14.29,6.86) -- cycle    ;
\draw [shift={(359.32,189.45)}, rotate = 182.82] [fill={rgb, 255:red, 208; green, 2; blue, 27 }  ,fill opacity=1 ][line width=0.08]  [draw opacity=0] (14.29,-6.86) -- (0,0) -- (14.29,6.86) -- cycle    ;
\draw [shift={(466.79,188.58)}, rotate = 175.08] [fill={rgb, 255:red, 208; green, 2; blue, 27 }  ,fill opacity=1 ][line width=0.08]  [draw opacity=0] (14.29,-6.86) -- (0,0) -- (14.29,6.86) -- cycle    ;

\draw [color={rgb, 255:red, 65; green, 117; blue, 5 }  ,draw opacity=1 ][line width=2.25]    (207.98,53.54) -- (208.29,108.76) -- (208.6,162.69) ;
\draw [shift={(208.18,89.75)}, rotate = 269.67] [fill={rgb, 255:red, 65; green, 117; blue, 5 }  ,fill opacity=1 ][line width=0.08]  [draw opacity=0] (14.29,-6.86) -- (0,0) -- (14.29,6.86) -- cycle    ;
\draw [shift={(208.5,144.32)}, rotate = 269.67] [fill={rgb, 255:red, 65; green, 117; blue, 5 }  ,fill opacity=1 ][line width=0.08]  [draw opacity=0] (14.29,-6.86) -- (0,0) -- (14.29,6.86) -- cycle    ;

\draw [line width=1.5]    (208.6,162.69) .. controls (224.06,151.1) and (292.75,126.25) .. (313.25,129.25) .. controls (333.75,132.25) and (474.5,139.75) .. (487.75,137.25) ;
\draw [shift={(266.19,138.17)}, rotate = 162.06] [fill={rgb, 255:red, 0; green, 0; blue, 0 }  ][line width=0.08]  [draw opacity=0] (11.61,-5.58) -- (0,0) -- (11.61,5.58) -- cycle    ;
\draw [shift={(407.94,135.54)}, rotate = 182.79] [fill={rgb, 255:red, 0; green, 0; blue, 0 }  ][line width=0.08]  [draw opacity=0] (11.61,-5.58) -- (0,0) -- (11.61,5.58) -- cycle    ;

\draw [line width=1.5]    (208.6,162.69) .. controls (230.25,184.75) and (230.75,186.75) .. (268.25,222.75) .. controls (305.75,258.75) and (456.5,268.25) .. (515.5,232.25) ;
\draw [shift={(242.92,198.07)}, rotate = 225.09] [fill={rgb, 255:red, 0; green, 0; blue, 0 }  ][line width=0.08]  [draw opacity=0] (11.61,-5.58) -- (0,0) -- (11.61,5.58) -- cycle    ;
\draw [shift={(398.79,254.94)}, rotate = 181.05] [fill={rgb, 255:red, 0; green, 0; blue, 0 }  ][line width=0.08]  [draw opacity=0] (11.61,-5.58) -- (0,0) -- (11.61,5.58) -- cycle    ;

\draw   (117.04,162.69) .. controls (117.04,119.54) and (158.04,84.56) .. (208.6,84.56) .. controls (259.17,84.56) and (300.16,119.54) .. (300.16,162.69) .. controls (300.16,205.84) and (259.17,240.82) .. (208.6,240.82) .. controls (158.04,240.82) and (117.04,205.84) .. (117.04,162.69) -- cycle ;
 
\draw  [fill={rgb, 255:red, 248; green, 231; blue, 28 }  ,fill opacity=0.55 ] (297.23,181.25) .. controls (287.23,215.15) and (251.27,240.25) .. (208.44,240.25) .. controls (157.87,240.25) and (116.88,205.27) .. (116.88,162.12) .. controls (116.88,119.06) and (157.7,84.14) .. (208.11,83.99) -- (208.44,162.12) -- cycle ;

\draw  [fill={rgb, 255:red, 189; green, 16; blue, 224 }  ,fill opacity=0.48 ] (208.6,84.56) .. controls (259.1,84.63) and (300,119.33) .. (300,162.12) .. controls (300,168.79) and (299,175.27) .. (297.13,181.45) -- (208.44,162.12) -- cycle ;

\draw (219.9,49.18) node [anchor=north west][inner sep=0.75pt]  [font=\large]  {$\gamma $};

\draw (305.57,156.92) node [anchor=north west][inner sep=0.75pt]  [font=\large]  {$\eta _{4}$};

\draw (378.7,139.9) node [anchor=north west][inner sep=0.75pt]    {$\omega _{2}$};

\draw (345.2,229.4) node [anchor=north west][inner sep=0.75pt]    {$\omega _{1}$};

\draw (235,111.4) node [anchor=north west][inner sep=0.75pt]    {$\Omega $};

\draw (169.5,163.9) node [anchor=north west][inner sep=0.75pt]    {$\tilde{\Omega }$};

\end{tikzpicture}

    \caption{Contour for RHP \ref{eq:eqEminA} }
    \label{fig:localpar}
\end{figure}
Using the jump of $E_{-A}$ and $\widetilde{y}$ across $\gamma$, we can check that $m$ is analytic across $\gamma$. Moreover, it satisfies the following jump relations across $\eta_4 \cup \omega_1 \cup \omega_2$
\begin{gather}
m_{+}(z) = m_{-}(z) J_{m}(z), 
\end{gather}
where the jump matrix $J_{m}(z)$ can be factored as follows
\begin{gather}\label{eq:eq67n}
J_{m}(z) =  \Bigg( \frac{1}{\mathcal{Y}(z)} \Bigg)^{-\sigma_3/2} \left\{\begin{array}{lr}
\begin{pmatrix}
0 & i\\
i & 0
\end{pmatrix}, \quad & z \in \eta_4\\\\[2pt]
\begin{pmatrix}
1 & 0\\
-i e^{\ln(N) \frac{4c}{\tau} \int_{-A}^{z} \frac{\di s}{S(s)}} & 1
\end{pmatrix}, \quad & z \in \omega_1 \cup \omega_2
\end{array}\right\} \Bigg( \frac{1}{\mathcal{Y}(z)} \Bigg)^{\sigma_3/2}.
\end{gather}
Note that, in relation \eqref{eq:eq67n}, we have replaced $\int_{-A}^{z} \frac{\di s}{R(s)}$ with $\int_{-A}^{z} \frac{\di s}{S(s)}$ since the two agree on the contours $\omega_1$ and  $\omega_2$. It is more convenient to use the function $S(s)$ instead of $R(s)$, because it is branched along $\eta_3$ and $\eta_4$, which are the relevant contours of non-analyticity in this section.

Next we define the function
\begin{gather}
P(z) = m(z) \, \Bigg( \frac{1}{\mathcal{Y}(z)} \Bigg)^{-\sigma_3/2}\,,
\end{gather}
which satisfies the jump relations
\begin{gather*}
P_{+}(z) = P_{-}(z) \begin{cases}
\begin{pmatrix}
0 & i\\
i & 0
\end{pmatrix}, \quad & z \in \eta_4\\\\[2pt]
\begin{pmatrix}
1 & 0\\
-i e^{\ln(N) \frac{4c}{\tau} \int_{-A}^{z} \frac{\di s}{S(s)}} & 1
\end{pmatrix}, \quad & z \in \omega_1 \cup \omega_2.
\end{cases}
\end{gather*}
Analysis of the local behavior of the function $\frac{4c}{\tau}\int_{-A}^{z} \frac{\di s}{S(s)}$ near $z=-A$ , prompts us to introduce the conformal mapping $\xi = \varphi_{-A}(z)$, with the analytic function $\varphi_{-A}$ defined implicitly via the relation
\begin{gather}\label{eq:eq67new}
\left( \varphi_{-A}(z) \right)^{1/2} \equiv  - \ln(N) \frac{c}{\tau} \int_{-A}^{z} \frac{\di s}{S(s)}\,.
\end{gather}
This function maps the point $z=-A$ to $\xi = 0$, and the contour $\eta_{4}$ emanating from $-A$ is mapped to $\mathbb{R}_{-}$.  We choose the contours $\omega_{1}$ and $\omega_{2}$ (within $\mathcal{U}_{-A}$) so that in the $\xi$ plane, they map to precisely the rays $\mbox{arg}(\xi) = \pm 2 \pi /3 $ as shown in Figure \ref{fig Contour Psi(xi)}.  

Now we consider the jumps in the $\xi$ plane satisfied by $P(\varphi_{-A}^{-1}(\xi))$:

\begin{gather}\label{eq:eq610n}
\left(P (  \varphi_{-A}^{-1}(\xi)) \right)_{+} = \left(P (  \varphi_{-A}^{-1}(\xi)) \right)_{-}
 \begin{cases}
\begin{pmatrix}
0 & -i\\
-i & 0
\end{pmatrix}, \quad & \xi \in (-\infty,0)\\ \\[4pt]
\begin{pmatrix}
1 & 0\\
i e^{- 4 \xi^{1/2}} & 1
\end{pmatrix}, \quad & \mbox{Arg}(\xi) = \pm 2 \pi / 3
\end{cases}
\end{gather}
It is then straightforward to verify that the quantity
\begin{eqnarray}
    P(\varphi_{-A}^{-1}(\xi)) e^{2 \xi^{1/2} \, \sigma_3} e^{ \frac{- i \pi \sigma_{3}}{4}}
\end{eqnarray}
satisfies the jump relationships
\begin{gather}
\left(
P(\varphi_{-A}^{-1}(\xi)) e^{2 \xi^{1/2} \, \sigma_3} e^{ \frac{ -i \pi \sigma_{3}}{4}}
\right)_{+}= \left( P(\varphi_{-A}^{-1}(\xi)) e^{2 \xi^{1/2} \, \sigma_3} e^{ \frac{ -i \pi \sigma_{3}}{4}}\right)_{-} \begin{cases}
\begin{pmatrix}
0 & 1\\
-1 & 0
\end{pmatrix}, \quad & \xi \in (-\infty,0)\\ \\[4pt]
\begin{pmatrix}
1 & 0\\
1 & 1
\end{pmatrix}, \quad & \mbox{Arg}(\xi) = \pm \frac{ 2 \pi }{3},
\end{cases}
\end{gather}
which are precisely the jump relationships satisfied by $\Psi$,  as shown in Figure \ref{fig Contour Psi(xi)}.  We therefore set 
\begin{eqnarray}
\label{eq:PzDef}
    P(\varphi_{-A}^{-1}(\xi)) e^{2 \xi^{1/2} \, \sigma_3} e^{ \frac{ -i \pi \sigma_{3}}{4}} = \Psi(\xi).
\end{eqnarray}
Then, using the definition \eqref{eq:eq233} of $\Psi(\xi) $ and the (conformal and hence invertible) transformation $\varphi_{-A}(z)$, \textit{\eqref{eq:PzDef} defines} $P(z)$ \textit{back in the $z$ plane}.   
Unraveling the prior transformations from $E_{-A}$ to $m$ and from $m$ to $P$, we have established the following proposition.

\begin{Proposition}
The matrix valued function $E_{-A}$ given by the expression
\begin{gather}\label{eq:eqEminAdfn}
E_{-A}(z) = Q_{-A}(z) \, e^{-i \pi \sigma_3/4} \, \Psi(\varphi_{-A}(z)) \, e^{i \pi \sigma_3/4}e^{-2 \varphi_{-A}(z)^{1/2}  \sigma_3}\Bigg( \frac{1}{\mathcal{Y}(z)}\Bigg)^{\sigma_3/2} \begin{cases}
e^{-(\widetilde{y}(z) - \frac{1}{\tau} \ln(N)) \sigma_3}, \quad z \in \Omega\\
e^{-(\widetilde{y}(z) + \frac{1}{\tau} \ln(N)) \sigma_3},  \quad z \in \widetilde{\Omega}
\end{cases}
\end{gather}
satisfies the jump relation \eqref{eq:eqEminA}. Here,
$\Psi(\xi)$ is as in \eqref{eq:eq233}, $Q_{-A}$ is any analytic and invertible matrix-valued function inside $\mathcal{U}_{-A}$ (to be determined later), the map $\varphi_{-A}$ is defined as in equation \eqref{eq:eq67new}, and the domains $\Omega,\, \wt \Omega$ are as in Figure \ref{fig:localpar}.
\end{Proposition}

\begin{remark}
The factor $e^{-i \pi \sigma_3/4}$ in front of the function $\Psi(\varphi_{-A}(z))$ does not arise directly from the construction of $E_{-A}(z)$. However, since the local parametrices in the upper half-plane can be constructed from those in the lower half-plane via a symmetry relation, this factor is included to serve this purpose. Moreover, because it is an analytic function, it does not affect the jump of $E_{-A}(z)$.
\end{remark}

\subsection{Local parametrix near \texorpdfstring{$z=\wo{A}$}{A conjugate}}

Recall that $\mathcal{U}_{\wo{A}}$ is a small neighborhood of $z=\wo{A}$.  We seek $E_{\wo{A}}(z)$, a parametrix which satisfies the same jump relations as $E(z)$ within $\mathcal{U}_{\wo{A}}$, and matches the global parametrix on the boundary of $\mathcal{U}_{\wo{A}}$.  The jump relations for $E$ are:
\begin{equation}\label{eq:eq21}
E_{ +}(z) = E_{ -}(z) \, \begin{cases}
\begin{pmatrix}
0 & i\\
i & 0
\end{pmatrix}, \quad & z \in \eta_4\\ \\[4pt]
\mathfrak{N}^{+}(z), \quad & z \in \omega_1 \cup \omega_2
\end{cases}
\end{equation}

\begin{Proposition}
The matrix valued function $E_{\wo{A}}$ given by the expression
\begin{gather}\label{eq:eq235}
E_{\wo{A}}(z) = Q_{\wo{A}}(z) e^{i \pi \sigma_3/4} \, \Psi \left(\varphi_{\wo{A}}(z)\right) \, e^{-i \pi \sigma_3/4} e^{-2 \varphi_{\wo A}(z)^{1/2}  \sigma_3} \Big( e^{2 \widetilde{y}(z)} \mathcal{Y}(z) \Big)^{-\sigma_3/2}
\end{gather}
satisfies the jump relations \eqref{eq:eq21}. Here $\Psi$ is given by equation \eqref{eq:eq233}, $Q_{\wo{A}}$ is an analytic function inside $\mathcal{U}_{\wo{A}}$ and the map $\varphi_{\wo{A}}$ is defined as
\begin{equation}
\label{eq:varphi_Abar}
    \varphi_{\wo A}(z)^{1/2} = -\ln(N)\frac{c}{\tau}\int_{\wo A}^z \frac{\di s}{S(s)}\,.
\end{equation}
\end{Proposition}
\begin{proof}
It suffices to show that $E_{\wo{A}}(z)$ as in equation \eqref{eq:eq235} satisfies the jump relation \eqref{eq:eq21}. We therefore compute the jump of $E_{\wo{A}}(z)$, first along $\eta_4$. Note that the non-analytic functions in the definition of $E_{\wo{A}}(z)$ along $\eta_4$ are: $\Psi \left( \varphi_{\wo{A}}(z) \right)$, $\varphi_{\wo A}(z)^{1/2}$ and $\widetilde{y}(z)$. We then have:
\begin{eqnarray}
&&\nonumber
J_{E_{\wo{A}}}(z) =\left[\left( E_{\wo{A}} \right)_{-}(z)\right]^{-1} \left( E_{\wo{A}} \right)_{+}(z) \\
\label{eq:eqjump}
&&= \left( e^{2 \widetilde{y}_{-}(z)} \mathcal{Y}(z) \right)^{\sigma_3/2} e^{2 \varphi_{\wo{A}-}(z)^{1/2} \sigma_3} e^{i \pi \sigma_3/4}J_{\Psi} \left( \varphi_{\wo{A}}(z) \right) e^{-i \pi \sigma_3/4} e^{-2 \varphi_{\wo{A}+}(z)^{1/2} \sigma_3}\left( e^{2 \widetilde{y}_{+}(z)} \mathcal{Y}(z) \right)^{\sigma_3/2}.
\end{eqnarray}
From the properties of $\Psi$, for $z \in \eta_4$, we have:
\begin{gather}\label{eq:eq618n}
e^{i \pi \sigma_3/4} J_{\Psi}\left( \varphi_{\wo{A}}(z) \right) e^{-i \pi \sigma_3/4} = \begin{pmatrix}
  0 & i \\
  i & 0
\end{pmatrix}.
\end{gather}
In addition, $\varphi_{\wo{A}}(z)^{1/2}$ has jump across $\eta_4$, specifically:
\[\varphi_{\wo{A}-}(z)^{1/2} = - \varphi_{\wo{A}+}(z)^{1/2}, \quad z \in \eta_4\ .\]
Finally, recall that the function $\widetilde{y}$ satisfies the jump relation \eqref{eq:eqytilde} along $\eta_4$. Using all of these relations, straightforward multiplication of the right-hand side of equation \eqref{eq:eqjump} yields the desired jump of $E_{\wo{A}}(z)$ along $\eta_4$. Next, we compute the jump of $E_{\wo{A}}(z)$ along $\omega_1 \cup \omega_2$. Note that the only non-analytic piece in the definition of $E_{\wo{A}}(z)$ along these two rays is the function $\Psi \left( \varphi_{\wo{A}}(z) \right)$. We then have:
\begin{align*}
J_{E_{\wo{A}}}(z) & = \left( e^{2 \widetilde{y}(z)} \mathcal{Y}(z) \right)^{\sigma_3/2} e^{2 \phi_{\wo{A}}(z)^{1/2} \sigma_3} e^{i \pi \sigma_3/4} J_{\Psi}\left( \varphi_{\wo{A}}(z) \right) e^{-i \pi \sigma_3/4} e^{-2 \varphi_{\wo{A}}(z)^{1/2} \sigma_3}\left( e^{2 \widetilde{y}(z)} \mathcal{Y}(z) \right)^{\sigma_3/2}.
\end{align*}
From the properties of $\Psi$, for $z \in \omega_1 \cup \omega_2$, we have:
\[e^{i \pi \sigma_3/4} J_{\Psi} \left( \varphi_{\wo{A}}(z) \right) e^{-i \pi \sigma_3/4} = \begin{pmatrix}
  1 & 0 \\
  -i & 1
\end{pmatrix}.\]
Inserting this into the equation for $J_{E_{\wo{A}}}(z)$, a straightforward calculation yields the desired jump.
\end{proof}

\begin{remark}
We list here some properties of the map $\varphi_{\wo A}(z)^{1/2}$. The map $z \mapsto \varphi_{\wo A}(z)^{1/2}$ is a locally analytic map from $\mathcal{U}_{\wo{A}}$ to a neighborhood of $0$ such that $\varphi_{\wo A} \left( \eta_4 \cap \mathcal{U}_{\wo{A}} \right) \subset \mathbb{R}_{-}$. It is immediate to see that $\wo{A}$ is mapped to $0$ under this map. Moreover, for $z \in \overline{\Omega_{2}^{+}} \cap \mathcal{U}_{\wo{A}}$, we have that $\varphi_{\wo A}(z)^{1/2} = - \ln(N) \wt u(z)/4$. From lemma \ref{lem:immaginary_part} it follows that $\wt u(z)$ is purely imaginary on the boundary of $\Omega_{2}^{+}$. This shows that $\varphi_{\wo A}(z) \in \mathbb{R}_{-}$ along $\eta_4 \cap \mathcal{U}_{\wo{A}}$. Lastly, we choose $\omega_1$ and $\omega_2$ (within $\mathcal{U}_{\wo{A}}$) as the preimages of the rays with arguments $\pm 2\pi/3$, respectively.
\end{remark}

\subsection{Local parametrix near \texorpdfstring{$z=A$}{A}}

Recall that $\mathcal{U}_{A}$ is a small neighborhood of $z=A$, and we denote by $E_{A}$ the solution to the local problem near $z=A$. Inside $\mathcal{U}_{A}$, $E_{A}$ satisfies the jump relation
\begin{gather}\label{eq:jumpEA}
E_{A +}(z) = E_{A +}(z) J_{E_{A}}(z), \quad J_{E_{A}}(z) = \begin{cases}
\begin{pmatrix}
0 & i\\
i & 0
\end{pmatrix}, \quad & z \in \eta_3\\\\[2pt]
\mathfrak{M}^{-}(z), \quad & z \in \omega_3 \cup \omega_4.
\end{cases}
\end{gather}
\begin{Proposition}
The matrix valued function $E_{A}$ given by the expression
\begin{gather}\label{eq:eqsolEA}
E_{A}(z) = - Q_{A}(z) \sigma_2 e^{i \pi \sigma_3/4} \Psi \big( \varphi_{A}(z) \big) e^{-i \pi \sigma_3/4} \sigma_2 e^{-2 \varphi_{A}(z)^{1/2}  \sigma_3} \left( \frac{e^{2 \widetilde{y}(z)}}{\mathcal{Y}(z) e^{4i \theta(z)}} \right)^{-\sigma_3/2}\,,
\end{gather}
satisfies the jump relation \eqref{eq:jumpEA}. 
Here, $Q_{A}$ is an analytic function inside $\mathcal{U}_{A}$ (to be determined later), and the map $\varphi_{A}$ is defined as 
\begin{gather}
\label{eq:varphi_A}
\varphi_{A} (z)^{1/2} = -\ln(N) \frac{c}{\tau} \int_{A}^{z} \frac{\di s}{S(s)}\,.
\end{gather}
\end{Proposition}
\begin{proof}
First, define 
\begin{gather}\label{eq:eq622}
\Psi_{A} (\xi) = - \sigma_2 e^{i \pi \sigma_3/4}\Psi (\xi)e^{-i \pi \sigma_3/4} \sigma_2.
\end{gather}
The reader may verify that $\Psi_{A}$ satisfies the jump relation
\begin{eqnarray}
\left(\Psi_{A}\right)_{+} (\xi)=\left(\Psi_{A}\right)_{-} (\xi) 
\begin{cases}
\begin{pmatrix}
0 & -i\\
-i & 0
\end{pmatrix}, \quad &  \xi \in (-\infty,0)\\ \\[4pt]
\begin{pmatrix}
1 & -i\\
0  & 1
\end{pmatrix}, \quad &  \mbox{Arg}(\xi) = \pm 2 \pi / 3  .
\end{cases}
\end{eqnarray}
The composition $\Psi_{A}(\varphi_{A}(z))$ possesses jumps across the contours $\eta_{3}$, $\omega_{3}$ and $\omega_{4}$.  However, these contours are oriented in the opposite way as their images in the $\xi$ plane.  The reader may verify that the jumps in the $z$ plane are the inverses of those in the $\xi$ plane:
\begin{eqnarray}
\label{eq:k6.24}
\left(\Psi_{A}(\varphi_{A}(z))\right)_{+}=\left(\Psi_{A}(\varphi_{A}(z))\right)_{-}
\begin{cases}
\begin{pmatrix}
0 & i\\
i & 0
\end{pmatrix}, \quad &  z \in \eta_{3}\\ \\[4pt]
\begin{pmatrix}
1 & i\\
0  & 1
\end{pmatrix}, \quad &  z \in \omega_{3} \cup \omega_{4} .
\end{cases}
\end{eqnarray}
For example, as $z$ approaches $\hat{z}$ from the $+$ side of $\eta_{3}$, $\xi = \varphi_{A}(z)$ approaches $\varphi_{A}(\hat{z})$ \textit{from below, i.e. from the $-$ side of $\mathbb{R}_{-}$.}  Similarly, if $z$ approaches $\hat{z}$ from the $-$ side of $\eta_{3}$, then $\xi = \varphi_{A}(z)$ approaches $\varphi_{A}(\hat{z})$ from the $+$ side of $\mathbb{R}_{-}$. This means that
\begin{eqnarray}
&&
 \lim_{\substack{z \to \hat{z} \\ z \in + \mbox{side of }  \eta_3}}  \Psi_{A}(\varphi_{A}(z) )= \left(\Psi_{A} \right)_{-}(\varphi_{A}(\hat{z})) = \left(\Psi_{A} \right)_{+}(\varphi_{A}(\hat{z})) \pmtwo{0}{i}{i}{0} \\
 && =  \lim_{\substack{z \to \hat{z} \\ z \in - \mbox{side of }  \eta_3}}  \Psi_{A}(\varphi_{A}(z) ) \pmtwo{0}{i}{i}{0},
\end{eqnarray}
which explains \eqref{eq:k6.24}.

Now the definition \eqref{eq:eqsolEA} can be rewritten as 
\begin{eqnarray*}
E_{A}(z) = Q_{A}(z) \Psi_{A}(\varphi_{A}(z))
     e^{-2 \varphi_{A}(z)^{1/2}  \sigma_3} \left( \frac{e^{2 \widetilde{y}(z)}}{\mathcal{Y}(z) e^{4i \theta(z)}} \right)^{-\sigma_3/2},
\end{eqnarray*}
and the known jump relations satisfied by the last two factors appearing on the right hand side can be used to verify that $E_{A}(z)$ satisfies the jump relations in \eqref{eq:jumpEA}.
\end{proof}

\subsection{Error function and global parametrix}

In this subsection, we define the global approximation to $E$, and introduce the error function $\cE$.  The global approximation is defined separately within each of the four disks and in the region exterior to all the disks.

\begin{gather}
\label{eq:E_approx_def}
 \quad E_{\mathrm{approx}}(z) = \begin{cases}
E_{A}(z), \quad & z\in \mathcal{U}_{A}\\
E_{\wo{A}}(z), \quad & z\in \mathcal{U}_{\wo{A}} \\
E_{-\wo{A}}(z), \quad & z\in\mathcal{U}_{-\wo{A}} \\
E_{-A}(z), \quad & z\in\mathcal{U}_{-A} \\
\Lambda(z), \quad & \text{otherwise}.
\end{cases}
\end{gather}

The error matrix, $\cE (z)$ is defined as follows

\begin{eqnarray}
\cE (z) = E(z) \, \left( E_{\mathrm{approx}}(z) \right)^{-1}.
\end{eqnarray}

Here, recall that $E$ is given by \eqref{eq:eqdfnE} and $\Lambda$ is the outer approximate solution given in \eqref{eq:eqLambda}. We choose the functions  $Q_{A}$, $Q_{\wo{A}}$, $Q_{-A}$, $Q_{-\wo{A}}$ appearing in the definitions of the local solutions so that the quantity $\cE(z)$ is in the small norm setting.  Specifically, we compute the jump of $\cE(z)$ across the disk boundaries, and then choose these quantities to ensure that the jump matrices are of the form $I + \mbox{small}$.  

We start with the jump relationship satisfied by $\cE(z)$ for $z\in \partial\mathcal{U}_{\wo{A}}$ where $\partial\mathcal{U}_{\wo{A}}$ (as all the four neighborhoods) has been chosen with counterclockwise orientation:
\begin{align}
J_{\cE}(z) &= \cE_{-}^{-1}(z) \, \cE_{+}(z)\\
& = \Big( E(z) \Lambda^{-1}(z) \Big)^{-1} \, E(z) E_{\wo{A}}^{-1}(z)\\
&= \Lambda(z) \Big( e^{2 \widetilde{y}(z)} \mathcal{Y}(z) \Big)^{\sigma_3/2} e^{2 \varphi_{\wo A}(z)^{1/2}  \sigma_3} e^{i \pi \sigma_3/4} \, \Psi^{-1} \left(\varphi_{\wo{A}}(z)\right) \, e^{-i \pi \sigma_3/4} Q_{\wo{A}}^{-1}(z).
\end{align}

Next we utilize the asymptotic behavior of the function $\Psi$ defined in \eqref{eq:eq233}, as presented in  \cite[p. 369]{Kuijlaars-2004}
\begin{gather}\label{eq:eq246}
\Psi(\xi) = \left( 2 \pi \xi^{1/2} \right)^{-\sigma_3/2} \, \frac{1}{\sqrt{2}} \begin{pmatrix}
1 + \mathcal{O}(\xi^{-1/2}) & i + \mathcal{O}(\xi^{-1/2})\\
i + \mathcal{O}(\xi^{-1/2}) & 1 + \mathcal{O}(\xi^{-1/2})
\end{pmatrix} e^{2 \xi^{1/2} \sigma_3}
\end{gather}
uniformly as $\xi \to \infty$. 
Using this, we find
\begin{align}
J_{\cE}(z) & = \frac{\sqrt{2}}{2} \, \Lambda(z) \Big( e^{2 \widetilde{y}(z)} \mathcal{Y}(z) \Big)^{\sigma_3/2} e^{2 \varphi_{\wo A}(z)^{1/2}  \sigma_3} e^{i \pi \sigma_3/4} e^{-2 \varphi_{\wo{A}}(z)^{1/2} \sigma_3} \\
\nonumber &  \left( I - \frac{1}{2} \begin{pmatrix}
1 & -i\\
-i & 1
\end{pmatrix} \begin{pmatrix}
\mathcal{O}\left( \varphi_{\wo{A}}^{-1/2}(z) \right) & \mathcal{O}\left( \varphi_{\wo{A}}^{-1/2}(z) \right)\\\\[0.5pt]
\mathcal{O}\left( \varphi_{\wo{A}}^{-1/2}(z) \right) & \mathcal{O}\left( \varphi_{\wo{A}}^{-1/2}(z) \right)
\end{pmatrix} + \ldots \right) \begin{pmatrix}
1 & -i\\
-i & 1
\end{pmatrix} \left( 2 \pi \varphi_{\wo{A}}^{1/2}(z) \right)^{\sigma_3/2} e^{- i \pi \sigma_3/4} Q_{\wo{A}}^{-1}(z) \,.
\end{align}
The last one simplifies to:
\begin{align}
J_\cE(z)& = \frac{\sqrt{2}}{2} \, \Lambda(z) \Big( e^{2 \widetilde{y}(z)} \mathcal{Y}(z) \Big)^{\sigma_3/2}  e^{i \pi \sigma_3/4}  \\
& \left( I - \frac{1}{2} \begin{pmatrix}
1 & -i\\
-i & 1
\end{pmatrix} \begin{pmatrix}
\mathcal{O}\left( \varphi_{\wo{A}}^{-1/2}(z) \right) & \mathcal{O}\left( \varphi_{\wo{A}}^{-1/2}(z) \right)\\\\[0.5pt]
\mathcal{O}\left( \varphi_{\wo{A}}^{-1/2}(z) \right) & \mathcal{O}\left( \varphi_{\wo{A}}^{-1/2}(z) \right)
\end{pmatrix} + \dots \right) \begin{pmatrix}
1 & -i\\
-i & 1
\end{pmatrix} \left( 2 \pi \varphi_{\wo{A}}^{1/2}(z) \right)^{\sigma_3/2} e^{- i \pi \sigma_3/4} Q_{\wo{A}}^{-1}(z)\,.
    \label{eq:eqjumperr}
\end{align}

At this point, we require that $J_{\cE}$ behaves like the identity matrix plus small corrections across $\partial\mathcal{U}_{\wo{A}}$. Therefore, matching the leading order terms in both sides in the last relation, we must define $Q_{\wo{A}}$ as follows:
\begin{gather}\label{eq:eq246n}
Q_{\wo{A}}(z) = \frac{\sqrt{2}}{2} \Lambda(z) \Big( e^{2 \widetilde{y}(z)} \mathcal{Y}(z) \Big)^{\sigma_3/2}  e^{i \pi \sigma_3/4} \begin{pmatrix}
1 & -i\\
-i & 1
\end{pmatrix} \left( 2 \pi \varphi_{\wo{A}}(z)^{1/2} \right)^{\sigma_3/2} e^{- i \pi \sigma_3/4}.
\end{gather}

\begin{Proposition}\label{prop55}
The function $Q_{\wo{A}}$ defined in  \eqref{eq:eq246n} is analytic for $z \in \mathcal{U}_{\wo{A}}$. Moreover, using this function in \eqref{eq:eq235}, we have
\begin{eqnarray}
\label{eq:jumperras}
J_{\cE} = I + \mathcal{O} \left( \frac{1}{\ln{(N)}}\right), \ \ z \in \partial \mathcal{U}_{\wo{A}}.
\end{eqnarray}

\end{Proposition}
\begin{proof}
To establish the analyticity of the function $Q_{\wo{A}}(z)$ inside the disk $\mathcal{U}_{\wo{A}}$, observe that all the functions comprising $Q_{\wo{A}}(z)$ are analytic within $\mathcal{U}_{\wo{A}}$, except for the functions $\Lambda(z)$, $\widetilde{y}(z)$, and $\varphi_{\wo{A}}^{1/2}(z)$ which have jumps across $(-\infty,0)$. Computing the jump of $Q_{\wo{A}}(z)$ on $(-\infty,0)$ by using the respective jumps of these functions and the definition of $Q_{\wo{A}}(z)$, it is straightforward to verify that these jumps cancel each other. Consequently, $Q_{\wo{A}}(z)$ is analytic on $(-\infty,0)$, and therefore analytic within $\mathcal{U}_{\wo{A}}$.  Returning to \eqref{eq:eqjumperr}, we see immediately from the definition \eqref{eq:varphi_Abar} that \eqref{eq:jumperras} holds.
\end{proof}

\begin{remark}\label{rem:small_norm}
Note that we can apply a similar approach to that used in proposition \ref{prop55} to identify the analytic functions $Q$ present in all local approximate solutions. By utilizing these functions and employing an argument analogous to that in proposition \ref{prop55} we can establish that
\begin{equation}
\label{eq:JEDiskBound}
J_{\cE}(z) = \mathbb{I} + \mathcal{O} \left(\frac{1}{\log{N}}\right), \quad z\in \partial\left(\mathcal{U}_{A} \cup  \mathcal{U}_{\wo{A}} \cup  \mathcal{U}_{-\wo{A}} \cup  \mathcal{U}_{-A} \right)\,.
\end{equation}
Since the determination of each of  $Q_{A}$, $Q_{-A}$, and $Q_{-\wo{A}}$ is achieved by similar considerations, we omit these calculations for the sake of brevity.
Next, we are interested at the jump of the function $\cE$ across the remaining contours.  As observed previously, $\cE$ has no jumps within the four disks. From its definition, we can check that $\cE$ is also analytic across $\eta_3 \cup \eta_4 \cup \gamma$.  There remains the jumps across the contours $\omega_1 \cup \omega_2 \cup \omega_3 \cup \omega_4$:
\begin{align}
 J_{\cE}(z) = \begin{cases}
\Lambda(z) \, \mathfrak{M}^{-}(z) \, \Lambda^{-1}(z), & \quad z \in \omega_3 \cup \omega_4\\
\Lambda(z) \, \mathfrak{N}^{+}(z) \, \Lambda^{-1}(z), & \quad z \in \omega_1 \cup \omega_2.
\end{cases}   
\end{align}
From proposition \eqref{prop:small_norm} we can conclude that $J_{\cE}$ behaves like identity plus small across the contour $\omega_1 \cup \omega_2 \cup \omega_3 \cup \omega_4$. 
We have established all the necessary ingredients to conclude that $\cE$ defined in \eqref{eq:E_approx_def} satisfies a small norm Riemann-Hilbert problem.  The contour for this Riemann-Hilbert problem consists of the four disk boundaries, and the contours $\{ \omega_{j} \cap \cV\}_{j=1}^{4}$, where we recall that $\cV = \C \setminus \{\cU_A\cup \cU_{\wo A}\cup\cU_{-A}\cup\cU_{-\wo A}\}$, as shown in Fig. \ref{fig:J_calE}.  On each of the contours, the jump matrix for $\cE$ is uniformly close to $I$.  Specifically, we have \eqref{eq:JEDiskBound}, and on each of the contours $\omega_{j}$, we have the uniform bound
\begin{equation}
    J_{\cE}(z) = \mathbb{I} + \mathcal{O} \left(e^{ - c \log{N} }\right), \quad z \in \omega_{j}\cap \cV, \ j = 1, \ldots, 4.
\end{equation}
Then, following (for example) the statement and proof of Theorem 7.10 on P. 1532 of \cite{DKMVZ1}, we may conclude that $\cE(z)$ not only exists, it also satisfies 
\begin{equation}\label{eq:eqfuncE}
\cE (z) = \mathbb{I} + \mathcal{O} \left(\frac{1}{\log{N} \left( 1 + |z| \right)}\right)
\end{equation}
for all $z \in \mathbb{C}$. Therefore, the global solution $E$ exists and is given by the expression
\begin{equation}\label{eq:eqdfnEglobal}
E(z) = \left( \mathbb{I} + \mathcal{O} \left(\frac{1}{\log{N}\left( 1 + |z| \right)}\right) \right) \, E_{\mathrm{approx}}(z), \quad z \in \mathbb{C}.
\end{equation}
By unraveling all the transformations we have introduced in Sections \ref{sec:dressing} and \ref{sec:Lenses}, we have established a complete asymptotic description of  the solution to RHP \ref{rhp:Atil}.  In the next section we will carry out this unraveling in order to determine the asymptotic behavior of the solution to the NLS equation.
\end{remark}

\begin{figure}
    \centering
\tikzset{every picture/.style={line width=0.75pt}}       

\begin{tikzpicture}[x=0.5pt,y=0.5pt,yscale=-1,xscale=1]

\draw [line width=1.5]    (137.34,144.08) .. controls (173.13,158.17) and (278.33,163.66) .. (329,163) .. controls (379.67,162.34) and (447.58,158.69) .. (519.68,142.75) ;
\draw [shift={(224.25,158.79)}, rotate = 4.7] [fill={rgb, 255:red, 0; green, 0; blue, 0 }  ][line width=0.08]  [draw opacity=0] (11.61,-5.58) -- (0,0) -- (11.61,5.58) -- cycle    ;
\draw [shift={(417.27,158.52)}, rotate = 354.48] [fill={rgb, 255:red, 0; green, 0; blue, 0 }  ][line width=0.08]  [draw opacity=0] (11.61,-5.58) -- (0,0) -- (11.61,5.58) -- cycle    ;

\draw [line width=1.5]    (137.34,80.97) .. controls (185.21,58.77) and (219,64) .. (330,62) .. controls (441,60) and (483.75,77.52) .. (519.68,79.64) ;
\draw [shift={(224.23,63.26)}, rotate = 358.38] [fill={rgb, 255:red, 0; green, 0; blue, 0 }  ][line width=0.08]  [draw opacity=0] (11.61,-5.58) -- (0,0) -- (11.61,5.58) -- cycle    ;
\draw [shift={(417.61,64.93)}, rotate = 5.62] [fill={rgb, 255:red, 0; green, 0; blue, 0 }  ][line width=0.08]  [draw opacity=0] (11.61,-5.58) -- (0,0) -- (11.61,5.58) -- cycle    ;

\draw  [line width=1.5]  (105.98,112.53) .. controls (105.98,95.1) and (120.02,80.97) .. (137.34,80.97) .. controls (154.66,80.97) and (168.7,95.1) .. (168.7,112.53) .. controls (168.7,129.95) and (154.66,144.08) .. (137.34,144.08) .. controls (120.02,144.08) and (105.98,129.95) .. (105.98,112.53) -- cycle ;
 
\draw  [line width=1.5]  (488.32,111.2) .. controls (488.32,93.77) and (502.36,79.64) .. (519.68,79.64) .. controls (537,79.64) and (551.05,93.77) .. (551.05,111.2) .. controls (551.05,128.62) and (537,142.75) .. (519.68,142.75) .. controls (502.36,142.75) and (488.32,128.62) .. (488.32,111.2) -- cycle ;

\draw [line width=1.5]    (137.34,299.08) .. controls (173.13,313.17) and (278.33,318.66) .. (329,318) .. controls (379.67,317.34) and (447.58,313.69) .. (519.68,297.75) ;
\draw [shift={(239.88,315.01)}, rotate = 184.2] [fill={rgb, 255:red, 0; green, 0; blue, 0 }  ][line width=0.08]  [draw opacity=0] (11.61,-5.58) -- (0,0) -- (11.61,5.58) -- cycle    ;
\draw [shift={(432.02,312.02)}, rotate = 173.94] [fill={rgb, 255:red, 0; green, 0; blue, 0 }  ][line width=0.08]  [draw opacity=0] (11.61,-5.58) -- (0,0) -- (11.61,5.58) -- cycle    ;

\draw [line width=1.5]    (137.34,235.97) .. controls (185.21,213.77) and (219,219) .. (330,217) .. controls (441,215) and (483.75,232.52) .. (519.68,234.64) ;
\draw [shift={(239.97,217.89)}, rotate = 178.95] [fill={rgb, 255:red, 0; green, 0; blue, 0 }  ][line width=0.08]  [draw opacity=0] (11.61,-5.58) -- (0,0) -- (11.61,5.58) -- cycle    ;
\draw [shift={(432.37,221.5)}, rotate = 186.44] [fill={rgb, 255:red, 0; green, 0; blue, 0 }  ][line width=0.08]  [draw opacity=0] (11.61,-5.58) -- (0,0) -- (11.61,5.58) -- cycle    ;

\draw  [line width=1.5]  (105.98,267.53) .. controls (105.98,250.1) and (120.02,235.97) .. (137.34,235.97) .. controls (154.66,235.97) and (168.7,250.1) .. (168.7,267.53) .. controls (168.7,284.95) and (154.66,299.08) .. (137.34,299.08) .. controls (120.02,299.08) and (105.98,284.95) .. (105.98,267.53) -- cycle ;

\draw  [line width=1.5]  (488.32,266.2) .. controls (488.32,248.77) and (502.36,234.64) .. (519.68,234.64) .. controls (537,234.64) and (551.05,248.77) .. (551.05,266.2) .. controls (551.05,283.62) and (537,297.75) .. (519.68,297.75) .. controls (502.36,297.75) and (488.32,283.62) .. (488.32,266.2) -- cycle ;

\draw (329,159.6) node [anchor=south] [inner sep=0.75pt]  [font=\Large]  {$\omega _{3}$};

\draw (330,58.6) node [anchor=south] [inner sep=0.75pt]  [font=\Large]  {$\omega _{4}$};

\draw (519.68,111.2) node    {$\mathcal{U}_{A}$};

\draw (137.34,112.53) node    {$\mathcal{U}_{-\overline{A}}$};

\draw (329,314.6) node [anchor=south] [inner sep=0.75pt]  [font=\Large]  {$\omega _{1}$};

\draw (330,213.6) node [anchor=south] [inner sep=0.75pt]  [font=\Large]  {$\omega _{2}$};

\draw (519.68,266.2) node    {$\mathcal{U}_{\overline{A}}$};

\draw (137.34,267.53) node    {$\mathcal{U}_{-A}$};

\end{tikzpicture}
    \caption{Jumps for the matrix $J_\cE$}
    \label{fig:J_calE}
\end{figure}

\section{Solution to the NLS equation}
\label{sec:solution}
In this section, we enforce all the previous constructions to prove Theorem \ref{thm:main_sym}.
\begin{remark}
To obtain $\psi_{SG}(x,t;N)$ (and its conjugate) we see from RHP \ref{rhp:Atil} that we need the behavior of $\tilde{A}(z)$ as $z \to \infty$. Specifically, we have:
\begin{equation}\label{eq:recformula1}
\psi_{SG}(x,t;N) =  \lim_{z\to \infty} 2iz \, \tilde{A}_{12}(z), \quad \wo{\psi_{SG}(x,t;N)} = \lim_{z\to \infty} 2iz \, \tilde{A}_{21}(z).
\end{equation}
Therefore, an expression for the matrix-valued function $\tilde{A}(z)$ for large-$z$ is required.
Observe that once $z$ is large enough, we are outside all of the various domains, which gives us
\begin{equation}\label{eq:eqapprAtilde}
\tilde{A}(z) = \xi(z) \ e^{-\left( \log{N} g_\infty + y_\infty \right) \sigma_3} \, \, e^{i \nu_0(z) \sigma_3} \Xi(z) e^{i \nu(z) \sigma_3} \,  e^{\left( \log{N} g(z) + y(z) \right) \sigma_3}.
\end{equation}
To see this, recall that in the exterior region, we have:
\begin{eqnarray}
E(z) = D(z) = C(z), \quad B(z) = \tilde{A}(z), \quad E_{\mathrm{approx}}(z) = \Lambda(z),
\end{eqnarray}
which, when combined with the definitions of $C(z)$, $\Lambda(z)$ (see relations \eqref{eq:eqdfnC}, \eqref{eq:eqLambda} and \eqref{eq:E_approx_def}) leads to \eqref{eq:eqapprAtilde}.
\end{remark}
Now, we prove Theorem \ref{thm:main_sym}, which we repeat for the convenience of the reader.
\begin{theorem}
\label{thm:main_sym_sec7}
    Under the $N$-soliton condensate scattering data assumption, for all $(x,t)$ in a compact set $\fK$, there is $N_{0}$ so that for all $N > N_{0}$, $\psi_{SG}(x,t;N)$ has the following asymptotic behavior

\begin{equation}
\label{eq:mod_square_00_sec7}
        \psi_{SG}(x,t;N)  = -2 i \frac{\Re(A)\Im(A)}{|A|} \sd\left( 2|A|x + \frac{2|A|}{\pi}\Re\left(\int_{\eta_1} \frac{\log(2\pi h(s)\rho(s))}{R_+(s)}\di  s\right); \cos(\theta)\right) e^{it(A^2+\wo A^2) + i \phi_0}+\mathcal{O} \left(\frac{1}{\log{N}}\right),
\end{equation}
where the error term $\mathcal{O}\left( \frac{1}{\log{N} } \right)$ is uniform for all $(x,t)$ in the compact set $\fK$.  Here $\sd$ is the Jacobi elliptic function $\sd$ \cite[Chap. 22]{dlmf},  
and $R(z),\phi_0$ are given by

\begin{align}
\label{eq:auxiliary_def_thm}
    &R(z) = (z-A)^{\frac{1}{2}}(z+A)^{\frac{1}{2}}(z-\wo A)^{\frac{1}{2}}(z+\wo A)^{\frac{1}{2}}\,,\\
    &\phi_0 = \ln(N) \frac{K(\cos(\theta))}{K(\sin(\theta))} + \Re\left( \int_{\eta_1} \frac{s\ln(2\pi h(s)\rho(s))}{R_+(s)}\di s\right)\,, \quad \theta = \arg(A)\
\end{align}
where the function $K(k)$ is the complete elliptic integral of the fist kind \cite[Chap. 19]{dlmf}, the branch-cuts are the canonical ones,  and the integration is performed on the straight line connecting the endpoints. 
\end{theorem}
\begin{proof}
Using equations \eqref{eq:recformula1}, \eqref{eq:eqapprAtilde} and \eqref{eq:eqfuncE}, we have:

\begin{equation}
\label{eq:solution_limit}
\begin{split}
     \psi_{SG}(x,t;N) &=\lim_{z\to \infty} 2iz \, \xi(z) \, s_{12}(z;x,t) \, \exp\left( -\ln(N)(g_\infty+g(z)) + i\nu_0 -i\nu(z) - y(z)-y_\infty \right) + \mathcal{O} \left(\frac{1}{\log{N}}\right)\\ & 
     = \lim_{z\to \infty} 2iz \, \xi(z) \, \Pi(z) \, \exp\left( -\ln(N)(g_\infty+g(z)) + i\nu_0 -i\nu(z) - y(z)-y_\infty + i\delta(z)\right) + \mathcal{O} \left(\frac{1}{\log{N}}\right)\,,\\ & 
     =2i\left(\lim_{z\to\infty}z\Pi(z)\right)\exp(-2\ln(N)g_\infty-2y_\infty+2i\nu_0) + \mathcal{O} \left(\frac{1}{\log{N}}\right)
\end{split}
\end{equation}
where we set
\begin{equation}
    \Pi(z) = \frac{1}{Z_2} \frac{\Theta \left( \mathcal{A}(z) + \frac{3+\tau}{4}; \tau \right)\Theta \left( \mathcal{A}(z) -  \frac{\zeta(x,N)}{2\pi} + \frac{\tau - 1}{4}; \tau \right)}{\Theta \left( \mathcal{A}(z) + \frac{ 1}{2}; \tau \right)\Theta \left( \mathcal{A}(z)  + \frac{\tau}{2}; \tau \right)}\,.
\end{equation}

Focusing on the exponential term, one can compute the terms  $g_\infty,y_\infty,\nu_0$ from Proposition \ref{prop:sym_R}, Proposition \ref{prop:prop210} and Proposition \ref{prop:asymptotic_for_delta}
obtaining that

\begin{align}
    g_\infty &= \frac{1}{2\tau} \\
    y_\infty & = \frac{it}{2}(A^2+\wo A^2)  + \frac{c}{\tau} \left( \pi x  +\Re\left( \int_{\eta_1} \frac{\log \,( 2 \pi h(s) \rho(s))}{R_{+}(s)} \di s\right)\right) + \frac{i}{\pi} \Re\left( \int_{\eta_1} \frac{s\log(2\pi h(s)\rho(s))}{R_+(s)}\di s\right) \\
    \nu_0 &=i\left( 4c\pi x + 2\ln(N) + 4c \Re\left( \int_{\eta_1} \frac{\log(2\pi h(s)\rho(s))}{R_+(s)}\di s\right)\right)\left(\frac{1}{4} + \frac{1}{4\tau}\right)\,.
\end{align}
With tears, one deduces that 

\begin{equation}
\label{eq:exp_part}
    \exp(-2\ln(N)g_\infty-2y_\infty+2i\nu_0) = \exp\left(\frac{i\zeta(x,N)}{2} + it(A^2+\wo A^2) + i \phi_0\right)\,,
\end{equation}
where $\phi_0$ is defined in \eqref{eq:auxiliary_def_thm} and $\zeta(x,N)$ in \eqref{eq:zeta_def_proof}. 
We now consider

\[
2i\lim_{z\to\infty}z\Pi(z) = -2ic \frac{\Theta'\left(\frac{\tau+1}{2}\right)\Theta\left(-\frac{\zeta(x,N)}{2\pi} + \frac{\tau-1}{2}\right)\Theta\left(\frac{3-\tau}{4}\right)}{\Theta(0)\Theta\left(\frac{3\tau-1}{4}\right)\Theta\left(\frac{\zeta(x,N)}{2\pi}\right)}\,,
\]
where we used Proposition \ref{prop:abel_map_values}. We can further simplify the previous equation by expressing it first in terms of the Jacobi theta functions \cite[Chap. 20]{dlmf}, and then in terms of the $\sd$ elliptic function \cite[Chap. 22]{dlmf} as 

\begin{equation}
\label{eq:lim_part}
    2i\lim_{z\to\infty}z\Pi(z) = -2 i \frac{\Re(A)\Im(A)}{|A|} \sd\left( 2|A|x + \frac{2|A|}{\pi}\Re\left(\int_{\eta_1} \frac{\log(2\pi h(s)\rho(s))}{R_+(s)}\di  s\right); \cos(\theta)\right)e^{-i\frac{\zeta(x,N)}{2}}\,.
\end{equation}
Combing \eqref{eq:exp_part}-\eqref{eq:lim_part}, we deduce the result.

\end{proof}

\begin{remark}
 Note that the asymptotic description of the solution is valid for $(x,t)$ in compact sets. The solution is a pure periodic elliptic wave and the period is independent of the distribution $\rho(z)$, and the norming constants $c_j$. Finally, we remark that the modulus of the solution does not depend on time.
\end{remark}

We show in figure \ref{fig:solution} several plots of the solution \eqref{eq:solution_limit} for different values of $N$.

\section{Connection to the kinetic theory}
\label{sec:kinetic}

As discussed in Section \ref{sec:introsolgas},  in order to connect to the kinetic theory of solitons, we will consider a tracer soliton interacting with a sequence of $N$-soliton condensate solutions.  
That is, we consider RHP \ref{rhp:reflection_RHP}, with  an additional pair of poles $\{ \lambda_o, \wo{\lambda}_o\}$, where $\lambda_o$ is a point in the discrete spectrum in $\mathbb{C}_{+}$ and outside the curve $\Gamma_1$. By symmetry $\wo{\lambda}_o$ is its conjugate in $\mathbb{C}_{-}$ and outside the curve $\Gamma_2$. Specifically, RHP \ref{rhp:reflection_RHP}, for a pure solitonic solution, becomes:
\begin{RHP}
    Given spectral data $\mathcal{D}(t)$, find a $2 \times 2$ matrix-valued function $M(z)$ such that:
\begin{itemize}
	\item $\ {M}(z)$ is meromorphic in  $ \C$, with simple poles at the points $\{ \lambda_{j}, \overline{\lambda_{j}}\}_{j=1}^{N}$ and the pair $\{ \lambda_o, \wo{\lambda}_o\}$.
	\item At each of the poles $\lambda_{j}$ in $\mathbb{C}_{+}$, $\ {M}(z)$ satisfies the residue condition 
	\begin{equation}
    \label{eq:rescondCP}
		\res_{\lambda_j} M(z) = \lim_{z \to \lambda_j } \left[ M(z)\begin{pmatrix}
			0 & 0 \\
			c_j e^{2i\theta(\lambda_j;x,t)} & 0
		\end{pmatrix}\right]\,,
    \end{equation}
and at each pole $\overline{\lambda_{j}}$ in $\mathbb{C}_{-}$:
\begin{equation}
    \label{eq:rescondCM}
\res_{\wo {\lambda_j}} M(z) = \lim_{z \to \wo \lambda_j } \left[ M(z) \begin{pmatrix}
			0 & -\wo c_j e^{-2i\theta(\wo \lambda_j;x,t)} \\
			0 & 0
		\end{pmatrix}\right].
	\end{equation}
    \item At $z=\lambda_o$, $M(z)$ satisfies the residue condition:
\begin{equation}
\label{eq:rescondoo}
		\res_{\lambda_o} M(z) = \lim_{z \to \lambda_o } \left[ M(z)\begin{pmatrix}
			0 & 0 \\
			c_o e^{2i\theta(\lambda_o;x,t)} & 0
		\end{pmatrix}\right]\,,
    \end{equation} 
and at $z=-\wo {\lambda}_o$, we have:
    \begin{equation}
    \label{eq:rescondooc}
\res_{\wo {\lambda_o}} M(z) = \lim_{z \to \wo \lambda_o } \left[ M(z) \begin{pmatrix}
			0 & -\wo c_o e^{-2i\theta(\wo \lambda_o;x,t)} \\
			0 & 0
		\end{pmatrix}\right].
	\end{equation}
\item $\ {M}(z) = \  I + O(z^{-1})$, as $z \to \infty$.
\end{itemize}
\end{RHP}

The analysis described in the previous sections still applies, except that we need to account at each step for the poles $z=\lambda_o, \wo{\lambda}_o$. So the transformations of Section \ref{sec:dressing}, encircling the poles $\lambda_{j}$ with $\Gamma_{1} $ (and similarly $\wo \lambda_j$ with $\Gamma_2$), and replacing the residue conditions at each pole with a jump across the contours $\Gamma_1$ and $\Gamma_{2}$, which we then collapse onto $\eta_1$ and $\eta_{2}$ are carried out in exactly the same way, but ensuring that $\lambda_{o}$ and $\wo{\lambda}_{o}$ are outside the contours. This implies that the series of transformations:
\[M(z) \to A(z) \to \widetilde{A}(z) \to B(z)\]
all apply. Since $\lambda_{o}$ and $\wo{\lambda_{o}}$ are outside $\Gamma_1\cup\Gamma_2$,  the residue conditions at $\lambda_{o}$ \eqref{eq:rescondoo} and $\wo {\lambda_{o}}$ \eqref{eq:rescondooc} may be rewritten, replacing $M$ with $B$:
\begin{equation}
\label{eq:rescondBoo}
		\res_{\lambda_o} B(z) = \lim_{z \to \lambda_o } \left[ B(z)\begin{pmatrix}
			0 & 0 \\
			c_o e^{2i\theta(\lambda_o;x,t)} & 0
		\end{pmatrix}\right]\,,
    \end{equation} 
and at $z=-\wo {\lambda}_o$, we have:
    \begin{equation}
    \label{eq:rescondBooc}
\res_{\wo {\lambda_o}} B(z) = \lim_{z \to \wo \lambda_o } \left[ B(z) \begin{pmatrix}
			0 & -\wo c_o e^{-2i\theta(\wo \lambda_o;x,t)} \\
			0 & 0
		\end{pmatrix}\right].
	\end{equation}
The next transformation introduces the function $C(z)$, which preserves its jump but alters the residue condition at $z= \lambda_o, \wo{\lambda}_o$. Specifically, $C(z)$ satisfies the following RHP:

\begin{RHP}
We look for a function $C(z)\in \text{Mat}(\C,2)$ such that:
\begin{itemize}
\item $C(z)$ is analytic on $\mathbb{C}\setminus\{\eta_1\cup \eta_2\cup \gamma\}$, and achieves its boundary values $C_{\pm}$ on $\eta_{1}$ and $\eta_{2}$ in the sense of smooth functions.
\item The boundary values satisfy $C_{+}(z) = C_{-}(z) J_{C}(z)$, where the jump matrix $J_{C}(z)$ is given as in RHP \ref{rhp:K_intial}.
\item At $z=\lambda_o$, $C(z)$ satisfies the residue condition:
\begin{equation}
\res_{\lambda_o} C(z) = \lim_{z \to \lambda_o } \left[ C(z) \begin{pmatrix}
			0 & 0 \\
			c_o e^{2i\theta(\lambda_o;x,t)} e^{-2 \left( \ln (N) g(\lambda_o) + y(\lambda_o)\right)}& 0
		\end{pmatrix}\right]
	\end{equation}
and at $z=\wo {\lambda}_o$, $C(z)$ satisfies the residue condition:
\begin{equation}
\res_{\wo {\lambda}_o} C(z) = \lim_{z \to \wo {\lambda}_o } \left[ C(z) \begin{pmatrix}
			0 & - \wo {c_o} e^{-2i\theta(\wo {\lambda}_o;x,t)} e^{2 \left( \ln (N) g(\wo {\lambda}_o) + y(\wo {\lambda}_o)\right)} \\
			0 & 0
		\end{pmatrix}\right].
	\end{equation}
\item $C(z) = I + \mathcal{O}(1/z)$, as $z \to \infty$.
\end{itemize}
\end{RHP}
The analysis carried out in Section \ref{sec:Lenses} may then be repeated, from $C$ to $D$ and the lens-opening from $D$ to $E$, so long as the additional poles $\lambda_{o}$ and $\wo{\lambda_{o}}$ are outside of the domains where transformations are made.  Next, the global approximation constructed in Section \ref{sec:global} is replaced by a modified global approximation that includes the additional poles $\lambda_{o}$ and $\bar{\lambda_{o}}$.  The quantity $\Lambda$ solving Riemann-Hilbert problem \ref{RHP:model_problem} then also has these two simple poles, and satisfies the residue conditions
\begin{equation}
\res_{\lambda_o} \Lambda(z) = \lim_{z \to \lambda_o } \left[ \Lambda(z) \begin{pmatrix}
			0 & 0 \\
			c_o e^{2i\theta(\lambda_o;x,t)} e^{-2 \left( \ln (N) g(\lambda_o) + y(\lambda_o)\right)}& 0
		\end{pmatrix}\right]
	\end{equation}
and 
\begin{equation}
\res_{\wo {\lambda}_o} \Lambda(z) = \lim_{z \to \wo {\lambda}_o } \left[ \Lambda(z) \begin{pmatrix}
			0 & - \wo {c_o} e^{-2i\theta(\wo {\lambda}_o;x,t)} e^{2 \left( \ln (N) g(\wo {\lambda}_o) + y(\wo {\lambda}_o)\right)} \\
			0 & 0
		\end{pmatrix}\right].
	\end{equation}
and $\Lambda(z) = I + \mathcal{O}(1/z)$, as $z \to \infty$.

The local approximations as constructed in Section \ref{sec:local} may be used without any changes, and so we arrive at an analogous Riemann-Hilbert problem for the error matrix $\mathcal{E}$, which is again in the small norm setting.  At the end of all of this, the previously determined $\psi_{SG}(x,t;N)$ (the soliton gas condensate solution) whose asymptotics are determined from the Riemann-Hilbert problem for $\Lambda$, is more complicated, and we use $\psi_{SG+T}(x,t;\lambda_{o},N)$ to denote this new solution.  It describes the leading order asymptotic interaction between the tracer soliton determined by $\lambda_{o}$ and $c_{o}$ and the condensate gas.  It is straightforward to follow the outline presented above in this section and establish that 
\begin{eqnarray}
\psi_{N+1}(x,t;\lambda_{o}) = \psi_{SG+T}(x,t;\lambda_{o},N) + \mathcal{O}\left( \frac{1}{N} \right),
    \end{eqnarray}
    with error that is uniformly controlled on any compact domain in $(x,t)$.

Heuristically, the tracer soliton is localized within the region where the residue condition is $\mathcal{O}(1)$ (i.e., neither large nor small). To achieve this, we choose $c_o$ proportional to $\exp{(2 \left( \ln(N) \Re[g(\lambda_o)]\right)}$ so that the residue condition for the tracer soliton is $\mathcal{O}(1)$ at $t=0$ and $x$ near $0$.    
 As $t$ evolves, the tracer soliton's ``effective position'' is approximated by the equation
\[\Re[2i \theta({\lambda}_o;x,t)]=0\]
which then leads to the equation 
\begin{eqnarray}
\label{eq:tracervelocity}
x(t)=-4 \Re[\lambda_o] t.
\end{eqnarray}
The analysis outlined above shows that the velocity of the tracer soliton is effectively constant, and the same as if the soliton were propagating in the absence of the background condensate.  

\begin{remark}
    It would be possible to carry out the more precise analysis in Section 6 of \cite{Girotti2023} and prove that the global maximum of $\psi_{N+1}(x,t;\lambda_{o})$ evolves in an oscillatory fashion, with average velocity $- 4 \Re[\lambda_{0}]$.  Since the details of this analysis are similar to those in \cite{Girotti2023}, we omit them.
    \end{remark}

In 2003 \cite{El2003n}, El derived a system of kinetic equations for the velocity $v(x,t,z)$ of a trial soliton propagating through a dense KdV soliton gas:
\begin{gather}\label{eq:eq87}
v(z) = 4 z^2 + \frac{1}{z} \int_{0}^{\infty} \ln \left| \frac{s+z}{s-z}\right| (v(z) - v(s)) f(s) \di s 
\end{gather}
where the local soliton density $f(x,t,z)$ satisfies the continuity equation
\[f_t + (v f)_x = 0.\]
For the $N$-soliton condensate solutions of the NLS equation considered here, our analysis leads to an analogous integral equation:
\begin{gather}\label{eq:eq88}
v(z) = -4 \Re[z] + \frac{1}{z} \int_{0}^{\infty} \ln \left| \frac{s+z}{s-z}\right| (v(z) - v(s)) f(s) \di s. 
\end{gather}
In this setting, $f$ is linked to the $x$-derivative of the jump of $g'$ across $\eta_{3}$.  While the jump of $g'$ across $\eta_{3}$ is non-trivial, it is actually independent of $x$, and so $f=0$. This not only satisfies the continuity equation trivially but also simplifies equation \eqref{eq:eq88} to $v(z) = -4 \Re[z]$.  We note, however, that location of the global maximum of the solution does not evolve with constant velocity, though its average velocity is constant.

In closing, we note that this is the first result in which the kinetic theory is rigorously confirmed for a finite collection of solitons, rather than in a long-time regime after having taking the $N \to \infty$ limit, as in \cite{Girotti2023}.  Indeed:
\begin{itemize}
    \item The asymptotic analysis holds for $N$ sufficiently large.   
    \item The soliton gas which forms at $t=0$ (although a relatively simple condensate) and the "effective velocity" conspire to satisfy the continuity equation and equation \eqref{eq:eq88} for $(x,t)$ in compact sets. 
\end{itemize}

\paragraph{Acknowledgments}
The authors thank Robert Jenkins for valuable discussions. G.M. was partially supported by the Swedish Research Council under grant no. 2016-06596 while the author was in residence at Institut Mittag-Leffler in Djursholm, Sweden during the fall semester of 2024.

\appendix 

\section{Technical Results}
\label{appendix:A}

\subsection{Proof of Proposition \ref{prop:sym_R}}

\begin{Proposition}
    The function $R(z)$ defined as

    \begin{equation}
    \label{eq:def_R_app}
        R(z)=(z-A)^{\frac{1}{2}}(z+\wo{A})^{\frac{1}{2}}(z+A)^{\frac{1}{2}}(z-\wo{A})^{\frac{1}{2}}\,,
    \end{equation}
    where for each factor $(\cdot )^{1/2}$ we use the principle branch, satisfies the following properties

\begin{enumerate}
\item $R(z) = z^{2} - \frac{A^{2} + \wo{A}^{2}}{2} + \mathcal{O} \left( \frac{1}{z^{2}}\right)$, as $z \to \infty$,
    \item $R(\wo z) = \wo R(z)$
    \item $R(- z) = R(z)$
\end{enumerate}
Furthermore, 
\begin{equation}
    \label{eq:elliptic_integral_app}
    \int_{\eta_1} \frac{1}{R_+(z)}\di z = -\frac{K(\cos(\theta))}{|A|}\,,\quad \int_{-\wo A}^{-A}\frac{1}{R(s)}\di s=-i\frac{K(\sin(\theta))}{|A|}\,,
\end{equation}
where $ \theta=\arg(A)$, and  $K(k)$ is the complete elliptic integral of the first kind \cite[Chap. 19]{dlmf}.

\end{Proposition}

\begin{proof}
    Properties \textit{1.-3.} follows from direct computations. Regarding \eqref{eq:elliptic_integral_app}, we define $m=\tan^2\left(\frac{\theta}{2}\right)$ and we perform the following change of variables
    
    \begin{equation}
        z = \frac{i\sqrt{m}w+1}{i\sqrt{m}w-1}|A|\,,
    \end{equation}
obtaining that
    \begin{align}
        & \int_{A}^{-\wo A} \frac{\di z}{R(z)} = -\frac{i}{2|A|\cos^2\left(\frac{\theta}{2}\right)}\int_{-m}^{-1}\frac{\di w}{i\sqrt{1-w^2}\sqrt{1-m^2w^2}}=-\frac{K(\sqrt{1-m^2})}{2|A|\cos^2\left(\frac{\theta}{2}\right)}\,,\\
        & \int_{-\wo A}^{-A} \frac{\di z}{R(z)} = -\frac{i}{2|A|\cos^2\left(\frac{\theta}{2}\right)}\int_{-1}^{1}\frac{\di w}{i\sqrt{1-w^2}\sqrt{1-m^2w^2}}=-i \frac{K(m)}{|A|\cos^2\left(\frac{\theta}{2}\right)}\,.
    \end{align}

    Noticing that $\frac{1-m}{1+m}=\cos(\theta)$ , we can apply the Landen's transform \cite[Chap 19.8]{dlmf} to rewrite the previous elliptic integral in terms of $\theta$ as

    \begin{equation}
        K(\sqrt{1-m^2}) = 2\cos^2\left(\frac{\theta}{2}\right)K(\cos(\theta))\,, \qquad K(\sin(\theta))=\cos^{-2}\left(\frac{\theta}{2}\right)K(m)\,,
    \end{equation}
    this concludes the proof.
    
\end{proof}

\subsection{Proof of Lemma \ref{lem:immaginary_part}}

In this subsection, we give a proof of Lemma \ref{lem:immaginary_part}. First, we prove the following auxiliary result.

\begin{Proposition}
    The $g$-function given in \eqref{eq:g_def} satisfies the following symmetry relation
\begin{gather}\label{eq:eqsymg}
\Re \left( g(z) \right)= - \Re \left( g(\wo{z}) \right), \quad z \in \mathbb{C}_{+}.
\end{gather}
\end{Proposition}

\begin{proof}
    We can rewrite the $g$-function \eqref{eq:g_def} as
\begin{gather}
g(z) = \frac{1}{2} - \frac{2 c}{\tau} \int_{A}^{z} \frac{\di s}{R(s)}, \quad z \in \mathbb{C}_{+} \\ \\[0.2pt]
g(z) = -\frac{1}{2} - \frac{2 c}{\tau} \int_{\wo{A}}^{z} \frac{\di s}{R(s)}, \quad z \in \mathbb{C}_{-}.
\end{gather}

Let $z \in \mathbb{C}_{+}$, then
\begin{equation}\label{eq:eqc2}
\begin{split}
g(\wo{z}) & = -\frac{1}{2} - \frac{2 c}{\tau} \int_{\wo{A}}^{z} \frac{\di s}{R(s)} = -\frac{1}{2} - \frac{2c}{\tau} \int_{0}^{1} \frac{(\wo{z}-\wo{A}) \di\omega}{R \left( s(\omega) \right)} \\ & = -\frac{1}{2} - \frac{2c}{\tau} \overline{ \int_{0}^{1} \frac{(z-A)}{R(\zeta(\omega))} \di \omega } = \overline{-\frac{1}{2} + \frac{2c}{\tau} \int_{A}^{z} \frac{d \zeta}{R(\zeta)}} = \overline{-g(z)}
\end{split}
\end{equation}
where we have used the change of variables 
\begin{gather}
s(\omega) = \wo{z} \omega + (1-\omega) \wo{A}, \quad \zeta(\omega) = z \wo{\omega} + (1-\wo{\omega}) A, \quad 0 \leq \omega \leq 1
\end{gather}
and the properties $R(\wo{z})=\overline{R(z)}$, and $\overline{s(\omega)}=\zeta(\omega)$.
Equation \eqref{eq:eqc2} shows the desired symmetry. 
\end{proof}

We are now ready to prove the next lemma that proves the first part of Lemma \ref{lem:immaginary_part}.

\begin{lemma}
\label{lem:first_part}
    Let the function $g(z)$ be the unique solution to RHP \ref{RHP:g}. Define the function $u(z)$ for $z\in \C_+\setminus(\eta_1\cup\gamma)$ as  
 \begin{equation}
    \label{eq:udef_app}
        u(z)=-2g(z)+1= \frac{4c}{\tau} \int_A^z\frac{1}{R(s)}\di s , \ \qquad z\in \mathbb{C}_{+} \setminus\left( \eta_{1} \cup \gamma \right),
    \end{equation}
    where the path of integration is chosen to avoid $\eta_{1}\cup \eta_{2}\cup\gamma $.         Let $\eta_{3}$ be the arc of the circle of radius $|A|$ centered at $0$ connecting $A$ and $-\overline{A}$.  We have  
      \begin{equation}
            \label{eq:prop_u}
         u_{-}(z) = g_{+}(z) - g_{-}(z), \quad z \in \eta_{1}\,, \qquad u(z) \in i\mathbb{R}, \quad z\in \eta_3
    \end{equation}
    
As with \eqref{eq:udef_app}, we define
\begin{equation} \label{eq:utdef_app}
    \tilde{u}(z) = \frac{4c}{\tau} \int_{\wo A}^z\frac{1}{R(s)}\di s , \ \qquad z\in \mathbb{C}_{-} \setminus\left( \eta_{2} \cup \gamma \right),
\end{equation}
where again the path is chosen to avoid $\eta_{1} \cup \eta_{2} \cup \gamma$.
Let $\eta_{4}$ be the arc of the circle of radius $|A|$ centered at $0$ connecting $-A$ and $\overline{A}$.  We have  
    \begin{equation}
         \wt u_{-}(z) = g_{+}(z) - g_{-}(z), \quad z \in \eta_{2}\,, \qquad u(z) \in i\mathbb{R}, \quad z\in \eta_4
    \end{equation}
     Clearly  $\eta_4$ lies below $\eta_2$ and we take it to be oriented from $-A$ to $\wo A$ (like $\eta_2$). 
\end{lemma}

\begin{proof}
We prove Lemma \ref{lem:first_part} for $u(z)$, and the proof for $\wt u(z)$ is analogous.

First, we verify the first part of \eqref{eq:prop_u} for $z\in\eta_1$

\begin{equation}
    g_+(z)-g_-(z) = -\frac{2c}{\tau}\int_A^z\frac{1}{R_+(s)}\di s +\frac{2c}{\tau}\int_A^z\frac{1}{R_-(s)}\di s  = \frac{4c}{\tau}\int_A^z\frac{1}{R_-(s)}\di s\,,
\end{equation}
where we used the definition of $g(z)$ \eqref{eq:g_def}.
Setting $\theta=\arg(A)$, we consider the function $h(\phi)$

\begin{equation}
    h(\phi) = \int_{|A|e^{i\theta}}^{|A|e^{i\phi}}\frac{\di s}{R(s)}\,,
\end{equation}
which is the restriction, up to an imaginary constant, of $u(z)$ on the circle of radius $|A|$.
Consider now $h'(\phi)$

\begin{equation}
    h'(\phi) = \frac{i|A|e^{i\phi}}{R(|A|e^{i\phi})}=\frac{\pm i}{\sqrt{2}|A|(\cos(2\phi) - \cos(2\theta))^{1/2}}\,,
\end{equation}
therefore 
\begin{equation}
    h'(\phi)\in\begin{cases}
        i\R & \phi\in(-\theta,\theta)\cup (\pi-\theta,\pi+\theta) \\
        \R & \phi\in (\theta,\pi-\theta)
    \end{cases}\,.
\end{equation}
By integration, one concludes the proof.

\end{proof}

Finally, we prove the following proposition, which implies the final part of Lemma \ref{lem:immaginary_part}.

\begin{Proposition}
\label{prop:propC1}
The $g$-function given in \eqref{eq:g_def} satisfies the following inequalities 
\begin{align}
\label{eq:gboundCplus}
&0 \le  \Re\left(g(z)\right) \leq \frac{1}{2}\quad z \in \C_+\setminus\Omega_3^+\,,\\
\label{eq:gboundCplusOmega}
&\frac{1}{2}\le \Re(g(z)) < 1\quad z\in \Omega_3^+\,.
\end{align}
Furthermore, the functions $u(z),\wt u(z)$ \eqref{eq:udef}-\eqref{eq:utdef} satisfy the following inequalities
\begin{align}
\label{eq:u_bounds_app}
    &\Re(u(z)) < 0 \quad z\in \Omega_3^+ 
    &\Re(u(z)) > 0 \quad z\in \C_+\setminus\wo\Omega_3^+ \\
    &\Re(\wt u(z)) > 0 \quad z\in \Omega_2^+ 
    &\Re(\wt u(z)) < 0 \quad z\in \C_-\setminus\wo\Omega_2^+ \\
\end{align}
and the equalities hold for $z\in\eta_3$ or $z\in\eta_4$ respectively. $\Omega_2^+,\Omega_3^+$ are the open regions enclosed by $\eta_1,\eta_3$ and $\eta_2,\eta_4$ respectively, see Figure \ref{fig:contour_imaginary}.
\end{Proposition}
\begin{proof}

To prove \eqref{eq:gboundCplus}, we note that 
\begin{itemize}
    \item $g(z)$ is an analytic function in $ \mathbb{C}_{+} \setminus ( \eta_1 \cup \gamma )$, where $\gamma$ here denotes the portion of the vertical contour $\gamma$ in $\mathbb{C}_{+}$.
    \item On $\eta_{1}$, $g$ satisfies $g_{+}(z) + g_{-}(z) = 1$, and on $\gamma$, $g$ satisfies $g_{+}(z) - g_{-}(z) = \frac{-2}{\tau}$ (see \eqref{e:eq314a}-\eqref{e:eq314c}).  
    \item From \eqref{eq:eqsymg}, it follows that $\Re\left(g(z)\right) = 0$ for $z \in \R$.
    \item Since $g_\infty \in i\R$, then $\Re\left(g(z)\right) \to 0$ as $z \to \infty$.
\end{itemize}
We introduce the related function $\tilde{g}$, defined as follows:
\begin{align}
\label{eq:gtildedef}
    \tilde{g}(z) = \begin{cases}
        1 - g(z), &  z \in \Omega_{3}^{+} \\
        g(z), &  z \in \mathbb{C}_{+} \setminus \Omega_{3}^{+}.
    \end{cases}
\end{align}
One may verify that in the upper half-plane, $\tilde{g}$ is analytic in $\mathbb{C}_{+} \setminus \left( \eta_{3} \cup \gamma \right) $.  Furthermore, it is a consequence of Lemma \ref{lem:first_part} that $\Re \left(\tilde{g}_{\pm}(z) \right) \equiv \frac{1}{2} $ for all $z\in\eta_{3}$.  

We therefore know that $\Re(\tilde{g}(z) )$ is harmonic for $z \in \mathbb{C}_{+} \setminus \left( \eta_{3} \cup \gamma \right)$. Therefore, by the maximum modulus principle, $\Re(\tilde{g}(z) )$ can only achieve its maximum and minimum on the boundary of its domain of harmonicity.  But this function is identically $0$ on $\mathbb{R}$, vanishes at $\infty$, and is identically $1/2$ on $\eta_{3}$.  

We note that $\Re(\tilde{g}(z) ) \ge 0$ for all $z \in \gamma$.  Indeed, suppose to the contrary that there is a point $\hat{z} \in \gamma$ such that $\Re(\tilde{g}(\hat{z} ) < 0$.  This would imply a  minimum $z^{*} \in \gamma$, with $\tilde{g}(z^{*}) < 0$.  But since $\tilde{g}_{+}- \tilde{g}_{-} = \frac{-2}{\tau} \in i \mathbb{R}$, the quantity 
\begin{align}
\begin{cases}
        \tilde{g}(z), & \mbox{ for } z \mbox{ to the right of } \gamma , \\
        \tilde{g}(z) - \frac{2}{\tau}, & \mbox{ for } z \mbox{ to the left of } \gamma,
\end{cases}
\end{align}
is analytic in a neighborhood of $z^{*}$, has the same real part as $\tilde{g}(z)$, and therefore has a critical point at $z^{*}$, which is impossible since $\tilde{g}'(z) = g'(z)$ for $z$ outside $\Omega_{3}^{+}$ and $g'(z)$ only vanishes at $\infty$.  Thus $\Re(\tilde{g}(z) ) \ge 0$ for all $z \in \gamma$.

Now applying the maximum modulus principle, we have that 
\begin{gather}
0 = \min_{s \in \mathbb{R} \cup \eta_3 \cup \gamma} \Re\left(\tilde{g}(s)\right) <\Re\left(\tilde{g}(z)\right) < \max_{s\in \mathbb{R} \cup \eta_3 \cup \gamma} \Re\left(\tilde{g} (s)\right) = 1/2,\,\quad z\in \mathbb{C}_{+} \setminus (\eta_{3} \cup \gamma ).
\end{gather}
Now for $z \in \mathbb{C}_{+} \setminus \Omega_{3}^{+}$, $g(z) = \tilde{g}(z)$, so that 
\begin{gather}\label{eq:eqrealpart}
    0 < \Re\left( g(z) \right) < \frac{1}{2} , \ \ z \in \mathbb{C}_{+} \setminus \Omega_{3}^{+}.
\end{gather}
However, within the set $\Omega_{3}^{+}$, using the definition \eqref{eq:gtildedef}, we have
\begin{gather}\label{eq:eqrealpart1}
       \frac{1}{2} < \Re\left( g(z) \right) < 1 , \ \ z \in \Omega_{3}^{+}.
\end{gather}
We now show \eqref{eq:u_bounds_app}. We notice that $u(z)=1-2 \, g(z)$. Therefore, using the proof of proposition \ref{prop:propC1}, and specifically relations \eqref{eq:eqrealpart} and \eqref{eq:eqrealpart1}, it follows that
\begin{align*}
0 < \Re(u(z)) < 1, & \quad z \in \mathbb{C}_{+}\setminus\overline{\Omega_3^+}\\
-1 < \Re(u(z)) < 0, & \quad z \in \Omega_3^+
\end{align*}
as desired. Because of Remark \ref{rem:sym_u_ut}, an analogous result holds for  $\wt u(z)$, therefore we have completed the proof.
\end{proof}

\subsection{Proof of Proposition \ref{prop:last_detail}}
\label{appendix:proof2}
Now, we can prove Proposition \ref{prop:last_detail}.
\begin{Proposition}
    Let $A_N(z),\wt A(z)$ be the solutions of RHP \ref{rhp:A} and RHP \ref{rhp:Atil}, respectively, then 
    \begin{equation}
    \label{eq:relation_A_A_tilde}
        A_N(z) = \left(I + \mathcal{O} \left( \frac{1}{N \left( 1 + |z| \right)} \right)\right)\wt A(z)\,,
    \end{equation}
uniformly in $z$.
\end{Proposition}
\begin{proof}
Consider the function $\cE_N(z) = A_N(z) \tilde{A}^{-1}(z)$ which is analytic on $\mathbb{C} \setminus \Gamma_1 \cup \Gamma_2$. Its jump on $\Gamma_1$ equals
\begin{eqnarray}
&&
J_{\cE_{N}}(z) = \tilde{A}_{-}(z) J_{A_N}(z) J_{\tilde{A}}^{-1}(z) \tilde{A}^{-1}(z) 
=  \tilde{A}_{-}
\pmtwo{1}{0}{\sum_{j=1}^{N} \frac{c_{j}e^{ 2 i \theta(\lambda_{j})}
}{z-\lambda_{j}} + N \int_{\eta_{1}} \frac{h(\lambda) \rho(\lambda) e^{ 2 i \theta(\lambda)} \di \lambda}{z- \lambda}}{1}
\left(  \tilde{A}_{-}\right)^{-1}
\nonumber \\
\label{eq:EcalJump}
&& = I + \tilde{A}_{-} \begin{pmatrix}
0 & 0\\
 \frac{q_1(z;N)}{N}  & 0
\end{pmatrix} \left(\tilde{A}_{-} \right)^{-1}
\end{eqnarray}
where we have introduced the quantity $q_{1}(z;N)$:
\begin{eqnarray}
\label{eq:q1def}
q_{1}(z;N) = N \left( {\sum_{j=1}^{N} \frac{c_{j}e^{ 2 i \theta(\lambda_{j})}
}{z-\lambda_{j}} + N \int_{\eta_{1}} \frac{h(\lambda) \rho(\lambda) e^{ 2 i \theta(\lambda)} \di \lambda}{z- \lambda}} \right).
\end{eqnarray}
The jump for $\cE_{N}$ on $\Gamma_2$ is similarly evaluated, and one finds that 
\begin{eqnarray}
&&
J_{\cE_{N}}(z) 
= I + \tilde{A}_{-}(z) \begin{pmatrix}
0 & \frac{q_2(z)}{N} \\
 0  & 0
\end{pmatrix} \left( \tilde{A}_{-} \right) ^{-1}
\end{eqnarray}
where
\begin{eqnarray}
q_{2}(z;N) =N \left(  {\sum_{j=1}^N \frac{\wo c_j}{z-\wo \lambda_j}e^{-2i\theta(\wo{\lambda}_j)} -  N \int_{\eta_{2}} \frac{{h(\lambda)} \rho(\lambda) e^{ - 2 i \theta(\lambda)} \di s}{z - \lambda}} \right).
\end{eqnarray}
Note that for $z \in \Gamma_{2}$, we have via symmetry that
\begin{eqnarray}
\label{eq:q2symm}
    q_{2}(z;N) = \overline{q_{1}(\overline{z};N)}.
\end{eqnarray}

Since both $A_N$ and $\tilde{A}$ solve Riemann-Hilbert problems on the same collection of contours, the quantity $\cE_{N}$ also satisfies a Riemann-Hilbert problem:
\begin{RHP}
\label{rhp:EA}
    Find a $2 \times 2$ matrix valued function $\cE_{N}(z)$ that is analytic in $\mathbb{C} \setminus \left(\Gamma_{1} \cup \Gamma_{2} \right)$, satisfies the normalization condition $\cE_{N}(z) = I + \mathcal{O} \left( \frac{1}{z} \right)$ as $z \to \infty$, has smooth boundary values on the $+$ and $-$ side of the contours $\Gamma_{1}$ and $\Gamma_{2}$, and satisfies the jump relation
    \begin{gather}
      \left(   \cE_{N} \right)_{+}(z) =      \left(   \cE_{N} \right)_{-}(z) J_{\cE_{N}}(z) \ , \ z \in \Gamma_{1} \cup \Gamma_{2} \ .
    \end{gather}
\end{RHP}

    We will prove that this Riemann-Hilbert problem is a so-called "small norm Riemann-Hilbert problem", in the sense that the jump matrix $J_{\cE_{N}}$ is uniformly close to the trivial jump matrix $I_{2}$.  We focus on the quantity $q_{1}(z;N)$, which we will show is analytic in an annular region surrounding the contour $\Gamma_{1}$ (but bounded away from the interval $\eta_{1}$), where it satisfies
\begin{eqnarray}
\label{eq:q1Uniform}
| q_{1}(z;N) |  \le \mbox{const}
\end{eqnarray}
unformly in the annular region.  (The constant depends in particular on the distance between the annular region and the interval $\eta_{1}$).
Because of the relation \eqref{eq:q2symm}, the same estimate holds for $q_{2}(z;N)$ in an annular region surrounding $\Gamma_{2}$.

First, from the definition \eqref{eq:q1def}, the analyticity of $q_{1}(z;N)$ in an annular region surrounding $\Gamma_{1}$ is clear.  In order to establish the uniform estimate \eqref{eq:q1Uniform}, we introduce the map $\Phi: \lambda \mapsto x$ defined via the relation
\begin{gather}\label{eq:map1}
 \Phi(\lambda) = \int_{\lambda_0}^{\lambda} \rho(s)\di s.
\end{gather}
Recall that the discrete eigenvalues $\lambda_j$ are defined as
\begin{gather}
\int_{\lambda_0}^{\lambda_j} \rho(s)\di s = \frac{2j-1}{2N}, \quad j=1,2, \ldots, N
\end{gather}
which we can rewrite as $x_j := \Phi(\lambda_j) = \frac{2j-1}{2N}$ from the definition of $\Phi$. 
We observe that in the $\lambda$-plane, the discrete eigenvalues $\lambda_j$ are not equidistant, and therefore they do not correspond to the midpoints of intervals that start at $\lambda_0=-a+ib$ and end at $\lambda_{N+1}=a+ib$. However, in the variable $x$, the points $x_j$ defined above become the midpoints of intervals $[\xi_{j}, \xi_{j+1}]$, with equidistant endpoints $\xi_j$ defined as
\[\xi_j = \frac{j-1}{N}, \quad j=1,\ldots,N+1,\]
where we note that  $\xi_1  = 0$ and $\xi_{N+1} = 1$.
One also has that $\xi_{j+1}$ is the midpoint of the interval $[x_{j}, x_{j+1}]$.

We introduce the following notation:
\begin{eqnarray}
\label{eq:HDEF}
H(s) = \frac{h(s) e^{2i \theta(s)}}{z-s}, \quad F(x) = H \left( \Phi^{-1}(x) \right)
\end{eqnarray}
where $h$ is the normalization constant sampling function. We have:
\begin{align*}
\sum_{j=1}^{N} H(\lambda_j) - N\int_{-\wo{A}}^{A} H(s)\di s & = \sum_{j=1}^{N} F(x_j) - N \int_{0}^{1} F(x) dx = \sum_{j=1}^{N} \left( F(x_j) - N \int_{\xi_{j}}^{\xi_{j+1}} F(x) \di x \right).
\end{align*}
Now we expand $F(s)$ using a Taylor series expansion around $x_j$. 
The first nonzero contribution arises from $F''(x_j)$, and so we find
\begin{align*}
\sum_{j=1}^{N} H(\lambda_j) - N\int_{-\wo{A}}^{A} H(s)\di s & = - \frac{1}{24 N^2} \sum_{j=1}^{N} F''(x_j) + \mathcal{O}(N^{-3})
\end{align*}
where the error term is $\mathcal{O}(N^{-3})$ because the function $F$ is analytic in a neighborhood of the interval $[0,1]$.  The sum on the right hand side of the last relation is again a Riemann sum on the same grid, and so it can be uniformly approximated:
\begin{eqnarray}
    \sum_{j=1}^{N} F''(x_j) = N\left( F'(1) -F'(0) \right)+ \mathcal{O} \left(\frac{1}{N} \right).
\end{eqnarray}
Now using (\ref{eq:q1def}), (\ref{eq:HDEF}), and some calculus, we find that 
\begin{align}
\label{eq:q1asymp}
q_1(z;N) & = \frac{1}{24} \left( \frac{(h'(-\wo{A}) + 2i h(-\wo{A}) \theta'(-\wo{A})) e^{2i \theta(-\wo{A})}}{z+\wo{A}} + \frac{h(-\wo{A}) e^{2i \theta(-\wo{A})}}{(z+\wo{A})^2} \right) \frac{1}{\rho(-\wo{A})} \\
&- \frac{1}{24} \left( \frac{(h'(A) + 2i h(A) \theta'(A)) e^{2i \theta(A)}}{z-A} + \frac{h(A) e^{2i \theta(A)}}{(z-A)^2} \right) \frac{1}{\rho(A)} + \mathcal{O} \left(\frac{1}{N}\right).
\end{align}
Therefore we have established that $q_{1}(z;N)$ is analytic in the desired annular region, and satisfies \eqref{eq:q1Uniform}.  Although such an explicit formula is not entirely necessary, \eqref{eq:q1asymp} provides the leading order asymptotic behavior of $q_{1}(z;N)$ for $N$ large.  As mentioned above, this yields the uniform boundedness of $q_{2}(z;N)$ in an annular region surrounding $\Gamma_{2}$.   

Next, we return to $J_{\cE_{N}}(z)$ on $\Gamma_1$ (see formula \eqref{eq:EcalJump}). At this point, it is convenient to assume that the contour $\Gamma_{1}$ encircles not only the interval $\eta_{1}$, but also the collection of contours $\eta_{3}, \omega_{3}$, and $\omega_{4}$ as well (see Figure \ref{rhp:small_norm}).  This assumption simplifies the representation of $\tilde{A}_{-}$ which we use to estimate the jump matrix $J_{\cE_{N}}(z)$.  Indeed, then for $z \in \Gamma_{1}$ we have the following representation for $\tilde{A}$:
\begin{gather}
\tilde{A}(z) = e^{- \left( \ln(N) g_\infty + y(z) \right) \sigma_3} \cE(z) \Lambda(z) e^{\left( \ln(N) g(z) + y(z) \right) \sigma_3} \begin{pmatrix}
1 & 0\\
-N \int_{\eta_1} \frac{h(\lambda) \rho(\lambda)}{\lambda - z} e^{2i \theta(\lambda)} \di \lambda & 1
\end{pmatrix}
\end{gather}
where $\cE(z)$ is the error function and $\Lambda(z)$ is the outer approximate solution. By direct calculation, the product $  \tilde{A}_{-}(z) \begin{pmatrix}
0 & 0 \\
\frac{q_1(z)}{N} & 0
\end{pmatrix}\tilde{A}_{-}^{-1}(z)$ in \eqref{eq:EcalJump} becomes
\begin{gather}\label{eq:product}
 \, e^{- \left( \ln(N) g_\infty + y(z) \right) \sigma_3} \cE(z) \Lambda(z) e^{y(z)\sigma_{3}}
 \pmtwo{0}{0}
 {\frac{q_1(z)e^{-2 \ln(N) g(z)  \sigma_3}}{N}}
 {0}
 e^{-y(z)\sigma_{3}}
  \left( \cE(z) \Lambda(z) \right)^{-1} e^{\left( \ln(N) g_\infty + y(z) \right) \sigma_3}.
\end{gather}

We notice that:
\begin{enumerate}  \label{eq:conditions}
    \item $g_\infty \in i\R$ and therefore $e^{\pm \ln (N) g_\infty}$ is bounded in $N$. \label{itemone}
    \item the scalar $q_{1}(z;N)$, as well as the matrices $\cE$, $\Lambda$, and  $e^{y(z) \sigma_3}$ are all uniformly bounded in $N$, for all $z\in \Gamma_{1}$ \label{itemtwo}
\end{enumerate}
and we have
\begin{gather*}
\frac{1}{N} e^{-2 \ln (N) g(z)} = e^{-\ln (N) \left( 1 + 2 g(z) \right)} = e^{-\ln (N) \Big( 1 + 2 \, \Re \left( g(z) \right) + i \left( 1 + 2 \, \mathfrak{Im} \left(g(z) \right) \right) \Big)}.
\end{gather*}
 Therefore we need an estimate on
 $\frac{1}{N} e^{-2 \ln (N) g(z)}$, and we have
\begin{gather}
\label{eq:eqestimate}
\left| \frac{1}{N} e^{-2 \ln (N) g(z)} \right| = e^{-\ln (N) \Big( 1 + 2 \, \Re \left( g(z) \right) \Big)} \leq \frac{C}{N}, \ \quad z \in \Gamma_{1},
\end{gather}
which follows because of \eqref{eq:gboundCplus}.  Finally then, we have established that
\begin{gather}
\label{eq:JEAbound1}
    J_{\cE_{N}}(z) = I_{2}  + \mathcal{O}\left( \frac{1}{N} \right), 
\end{gather}
where the error term is uniform for all $z $ in an annular region surrounding $\Gamma_{1}$.  The analysis of $J_{\cE_{N}}(z)$ on $\Gamma_2 \subset \mathbb{C}_{-}$ is entirely similar, and we leave the reader to verify that 
\begin{gather}
\label{eq:JEAbound2}
    J_{\cE_{N}}(z) = I_{2}  + \mathcal{O}\left( \frac{1}{N} \right), 
\end{gather}
uniformly for all $ z$ in an annular region surrounding $\Gamma_{2}$ as well.  

Returning to the Riemann-Hilbert problem \ref{rhp:EA}, we have established that the jump matrix satisfies \eqref{eq:JEAbound1} and is analytic in an annular region around $\Gamma_{1}$, and similarly it satisfies \eqref{eq:JEAbound2} and is analytic in an annular region surrounding $\Gamma_{2}$.  Then, following the statement and proof of Theorem 7.10 on P. 1532 of \cite{DKMVZ1}, we may conclude that $\cE_{N}(z)$ not only exists, it also satisfies  \eqref{eq:relation_A_A_tilde}.

\end{proof}

\bibliographystyle{siam}
\bibliography{mybib}

\end{document}